\def\cal{\mathcal}
\def\Bbb{\mathbb}
\newenvironment{pf*}[1]{\proof[#1]}{\endproof}
\renewcommand*\subjclass[2][1991]{%
  \def\@subjclass{#2}%
  \@ifundefined{subjclassname@#1}{%
    \ClassWarning{\@classname}{Unknown edition (#1) of Mathematics
      Subject Classification; using '1991'.}%
  }{%
    \@xp\let\@xp\subjclassname\csname subjclassname@#1\endcsname
  }%
}
\renewcommand{\subjclassname}{%
  \textup{1991} Mathematics Subject Classification}
\let\csname subjclassname@1991\endcsname \subjclassname
\renewcommand{\MR}[1]{}
\newenvironment{NB}{
\color{red}{\bf NB}. \footnotesize 
}{}
\newenvironment{NB2}{
\color{blue}{\bf NB}. \footnotesize
}{}
\newtheorem{Theorem}[equation]{Theorem}
\newtheorem{Proposition}[equation]{Proposition}
\theoremstyle{definition}
\newtheorem{Definition}[equation]{Definition}
\newtheorem{Example}[equation]{Example}
\newtheorem{Conjecture}[equation]{Conjecture}
\theoremstyle{remark}
\newtheorem*{Claim}{Claim}
\newtheorem{Remarks}[equation]{Remarks}
\numberwithin{equation}{section}
\newcommand{\thmref}[1]{Theorem~\ref{#1}}
\newcommand{\secref}[1]{\S\ref{#1}}
\newcommand{\propref}[1]{Proposition~\ref{#1}}
\newcommand{\subsecref}[1]{\S\ref{#1}}
\newcommand{\defeq}{:=}
\newcommand{\C}{{\Bbb C}}
\newcommand{\Z}{{\Bbb Z}}
\newcommand{\Q}{{\Bbb Q}}
\newcommand{\R}{{\Bbb R}}
\newcommand{\proj}{{\Bbb P}}
\newcommand{\CP}{\proj}
\newcommand{\SL}{\operatorname{\rm SL}}
\newcommand{\GL}{\operatorname{GL}}
\newcommand{\U}{\operatorname{\rm U}}
\newcommand{\SO}{\operatorname{\rm SO}}
\newcommand{\Ext}{\operatorname{Ext}}
\newcommand{\Ima}{\operatorname{Im}}
\newcommand{\rank}{\operatorname{rank}}
\newcommand{\pd}[2]{\frac{\partial#1}{\partial#2}}
\newcommand{\ve}{\varepsilon}
\newcommand{\linf}{{\ell_\infty}}
\newcommand{\shfO}{\mathcal O}
\newcommand{\bp}{{{\widehat\proj}^2}}
\newcommand{\bM}{{\widehat M}}
\newcommand{\ch}{\operatorname{ch}}
\newcommand{\Todd}{\operatorname{Todd}}
\newcommand{\Zin}{Z^{\text{\rm inst}}}
\newcommand{\bZin}{\widehat{Z}^{\text{\rm inst}}}
\newcommand{\bZ}{\widehat{Z}}
\newcommand{\Fin}{F^{\text{\rm inst}}}
\newcommand{\Ain}{A^{\text{\rm inst}}}
\newcommand{\Bin}{B^{\text{\rm inst}}}
\newcommand{\hT}{\widetilde T}
\newcommand{\res}{\mathop{\text{\rm res}}}
\newcommand{\Res}{\operatornamewithlimits{Res}}
\newcommand{\vechatom}{
    {\Vec{\omega}}
    \,\smash[b]{\hbox{\lower2\ex@\hbox{$\m@th\hat{\null}$}}}
}
\newcommand{\pt}{\operatorname{pt}}
\newcommand{\cV}{\mathcal V}
\newcommand{\Vcal}{\cV}
\newcommand{\cE}{\mathcal E}
\newcommand{\Eu}{\operatorname{Eu}}
\newcommand{\bgamma}{{\boldsymbol\gamma}}
\newcommand{\Nc}{r}
\newcommand{\T}{T} 
\newcommand{\fT}{\mathfrak T} 
\newcommand{\SW}{\operatorname{SW}} 
\newcommand{\fs}{\mathfrak s}
\begin{document}
\title[Donaldson = Seiberg-Witten]
{Donaldson = Seiberg-Witten
\\
from Mochizuki's formula and instanton counting
\\
}
\author{Lothar G\"ottsche}
\address{International Centre for Theoretical Physics, Strada Costiera 11, 
34151 Trieste, Italy}
\email{gottsche@ictp.it}

\author{Hiraku Nakajima}
\address{Research Institute for Mathematical Sciences,
Kyoto University, Kyoto 606-8502,
Japan}
\email{nakajima@kurims.kyoto-u.ac.jp}
\thanks{The second named author is partly supported by the
  Grant-in-Aid for Scientific Research (B) (No.~19340006), Japan
  Society for the Promotion of Science.}

\author{K\={o}ta Yoshioka}
\address{Department of Mathematics, Faculty of Science, Kobe University,
Kobe 657-8501, Japan}
\address{Max-Planck-Institut f\"{u}r Mathematik,
Vivatsgasse 7
53111 Bonn, Germany}
\email{yoshioka@math.kobe-u.ac.jp}
\thanks{The third named author is supported by the Grant-in-Aid for
  Scientific Research (B) (No.~18340010), Japan Society for the
  Promotion of Science, and Max Planck Institute for Mathematics.}

\begin{abstract}
  We propose an explicit formula connecting Donaldson invariants and
  Seiberg-Witten invariants of a $4$-manifold of simple type via
  Nekrasov's deformed partition function for the $N=2$ SUSY gauge
  theory with a single fundamental matter. This formula is derived
  from Mochizuki's formula, which makes sense and was proved when the
  $4$-manifold is complex projective. Assuming our formula is true for
  a $4$-manifold of simple type, we prove Witten's conjecture and sum
  rules for Seiberg-Witten invariants (superconformal simple type
  condition), conjectured by Mari\~no, Moore and Peradze.
\end{abstract}

\maketitle
\tableofcontents

\section{Introduction}

Let $X$ be a smooth, compact, connected, and oriented $4$-manifold
with $b_1 = 0$ and $b_+ \ge 3$ odd. We set
\begin{equation*}
  (K_X^2) \defeq 2\chi(X) + 3\sigma(X), \qquad
  \chi_h(X) \defeq \frac{\chi(X)+\sigma(X)}4.
\end{equation*}
When $X$ is a complex projective surface, these are the
self-intersection of the canonical bundle and the holomorphic Euler
characteristic respectively, and our notation is consistent.

Let $\xi\in H^2(X,\Z)$, $\alpha\in H_2(X)$ and $p\in H_0(X)$ be the
point class. In \cite{Witten} Witten explained that the generating
function ${\mathscr D}^{\xi}(\alpha)$ of Donaldson invariants (see
\eqref{eq:Donseries} for the definition) is related to Seiberg-Witten
invariants by
\begin{equation}\label{eq:Witten}
  \begin{split}
    {\mathscr D}^{\xi}(\alpha)
  &\defeq \sum_{n,k} \frac1{k!} \left(D^{\xi,n}(\alpha^k)
  + \frac12 D^{\xi,n}(\alpha^kp)\right)
\\
  &= 2^{(K_X^2) - \chi_h(X) + 2} (-1)^{\chi_h(X)}
  e^{(\alpha^2)/2}
  \sum_{\fs} \SW(\fs) (-1)^{(\xi,\xi+c_1(\fs))/2}
  e^{(c_1(\fs),\alpha)},
  \end{split}
\end{equation}
where $(\ ,\ )$ is the intersection form, $(\alpha^2) =
(\alpha,\alpha)$, $\SW(\fs)$ is the Seiberg-Witten invariant of a
spin${}^c$ structure $\fs$, and $c_1(\fs) = c_1(S^+)\in H^2(X,\Z)$ is
the first Chern class of the spinor bundle of $\fs$. And $X$ is
assumed to be of {\it SW-simple type}, i.e., $c_1(\fs)^2 = (K_X^2)$ if
$\SW(\fs)\neq 0$.

Witten's argument was based on Seiberg-Witten's ansatz \cite{SW1} of
$N=2$ SUSY gauge theory, which is a physical theory underlying
Donaldson invariants \cite{Wit}. It was not given in a way which
mathematicians can justify, so \eqref{eq:Witten} becomes Witten's
conjecture among mathematicians.

Let us explain the main point of Witten's argument. (See
\cite[Introduction]{NY2} for a more detailed exposition for
mathematicians.)
Seiberg-Witten's ansatz roughly says that the $N=2$ SUSY gauge theory
is controlled by a family of elliptic curves (called Seiberg-Witten
curves)
\begin{equation*}
  y^2 = 4x(x^2 + ux + \Lambda^4)
\end{equation*}
parametrized by $u\in\C$. Here $\Lambda$ is a formal variable used to
count the dimension of instanton moduli spaces in the prepotential of
the theory. (In the Donaldson series, one usually sets $\Lambda =
1$.)  Witten explained that ${\mathscr D}^{\xi}(\alpha)$ is given by
an integration over $u\in \C$, and the integrand is supported only at
points $u = \pm 2\Lambda^2$, where the corresponding elliptic curve is
singular, when $b_+ \ge 3$. Those points contribute as given on the
right hand side of \eqref{eq:Witten}. 

In mathematics, the Seiberg-Witten curves appear as elliptic curves
for the $\sigma$-function in Fintushel-Stern's blow-up formula
\cite{FS} for Donaldson invariants, and the parameter $u$ corresponds
to the point class $p$. However, no mathematician succeeded to make
Witten's argument rigorous.

An alternative mathematically rigorous approach was proposed by
Pidstrigach and Tyurin \cite{PT}, and further pursued by Feehan-Leness
\cite{FL}. It is based on moduli spaces of $\SO(3)$-monopoles, which
are a higher rank analog of $\U(1)$-monopoles used to define
Seiberg-Witten invariants. In particular, under a certain technical
assumption on a property of $\SO(3)$-monopole moduli spaces,
Feehan-Leness \cite{FL} (see also \cite{FL1,FL2}, in particular
\cite[Th.~3.1]{FLA}) showed that Donaldson invariants have the form
\begin{equation}\label{eq:FL}
  D^{\xi,n}(\alpha^k p^l) =
  \sum_{\fs} f_{k,l}(\chi_h(X),(K_X^2),\fs,\xi,\alpha,\fs_0) \SW(\fs),
\end{equation}
where the coefficients $f_{k,l}$ are not explicit, but depend only
on the $\chi_h(X)$, $(K_X^2)$ and various intersection products among
$\fs$, $\xi$, $\alpha$, $\fs_0$. Here $\fs_0$ is an auxiliary
spin${}^c$ structure needed for $\SO(3)$-monopole moduli spaces. As an
application, they proved Witten's conjecture for $X$ which satisfies
$(K_X^2) \ge \chi_h(X) - 3$ or is abundant, i.e., the orthogonal
complement of Seiberg-Witten classes contains a hyperbolic sublattice
\cite{FLA}.

For a complex projective surface $X$, Mochizuki, motivated by
\cite{PT,FL} (and also by \cite{Don,GP}), proved a formula expressing
Donaldson invariants in the form \eqref{eq:FL}, but the coefficients
$f_{k,l}$ are given as residue of an explicit $\C^*$-equivariant
integral over the product of Hilbert schemes of points on $X$ (see
\thmref{thm:Mochizuki}). He obtained the formula by applying the
Atiyah-Bott-Lefschetz fixed point formula to the algebro-geometric
counterpart of $\SO(3)$-monopole moduli spaces.

Our first main result (\thmref{thm:partition}) says that Mochizuki's
coefficients are given by leading terms, denoted by $F_0$, $H$, $A$,
$B$ of Nekrasov's deformed partition function for the $N=2$ SUSY gauge
theory with a single fundamental matter, which is the physics
counterpart of the $\SO(3)$-monopole theory. Thus the coefficients are
`equivariant $\SO(3)$-monopole invariants for $\R^4$' in some sense.

The proof is almost the same as that of the authors' wall-crossing
formula of Donaldson invariants with $b_+=1$, expressed in terms of
Nekrasov's partition function for the pure gauge theory \cite{GNY}: By
a cobordism argument (due to Ellingsrud-G\"ottsche-Lehn \cite{EGL}),
it is enough to show it for toric surfaces. Then the integral is given
as the product of local contributions from torus fixed points of $X$,
and the local contribution can be considered as the case
$X=\R^4$. Thus it is, by its definition, Nekrasov's partition
function.

From this result, we see that Mochizuki's coefficients depend on the
various data in the same way as those of Feehan-Leness. In particular,
they make sense also for a smooth $4$-manifold $X$. (Here $\fs_0$ is
given by the complex structure.) Hoping that Mochizuki's coefficients
are the same as those of Feehan-Leness\footnote{See
  \subsecref{subsec:partition} for our heuristic proof of this
  hope. This paper is motivated by Feehan-Leness' papers, but the
  proof is independent.}, we propose a conjecture: our formula remains
true for a smooth $4$-manifold of SW-simple type
(Conjecture~\ref{con:main}).

Nekrasov's partition functions are defined in a mathematically
rigorous way and have explicit combinatorial expressions in terms of
Young diagrams \cite{Nek}. Furthermore, the leading part $F_0$ is
given by certain period integrals over Seiberg-Witten curves
\cite{NY1,NO,BE}, $H$ is explicit, and $A$, $B$ are also given in
terms of Seiberg-Witten curves \cite{NY2}. The proofs in
\cite{NY1,NY2} were given only for the pure theory, but we extend them
for the theory with one matter in this paper using the theory of
perverse coherent sheaves \cite{perv3}. Thus Mochizuki's coefficients
are now given by residue of a differential form expressed by
Seiberg-Witten curves.

The pole, at which we take the residue, is at $u=\infty$. It is very deep
pole, and a direct computation of the residue looks difficult.
Fortunately there is a hint: a similar problem, for certain limits of
Donaldson invariants with $b_+ = 1$, was analyzed by G\"ottsche-Zagier
\cite{GZ}. Observing that their differential is defined on $\proj^1$,
holomorphic outside $\infty$, $\pm 2\Lambda^2$, they showed that it is
enough to compute the residues at poles $\pm 2\Lambda^2$ which are
simple, and proved an analog of Witten's conjecture. Also this picture
is close to Witten's original intuition\footnote{G.~Moore pointed out
  us that the $u$-plane integrand is a total derivative, at least if
  we take a derivative with respect to the metric. See
  \cite[(11.16)]{MW}}.

Let us emphasize that the extension of the differential to $\proj^1$
is already a nontrivial assertion. In the original formulation the
parameter $u$ was a {\it formal\/} variable used to introduce a
generating function of invariants. Therefore it is, a priori, defined
only in the formal neighborhood of $u=\infty$.
The extension is done, so far, by an explicit formula of the
differential. Thus the geometric picture of moduli spaces becomes
obscure at the points $\pm 2\Lambda^2$.

Our situation is similar to one in \cite{GZ}, but slightly
different. The Seiberg-Witten curve for the theory with a fundamental
matter is
\begin{equation*}
  y^2 = 4x^2(x+u) + 4m\Lambda^3 x + \Lambda^6,
\end{equation*}
and has one more parameter $m$, called the {\it mass\/} of the matter
field. And in our formula, this $m$ is chosen so that the above curve
is singular. Therefore the family of curves is different from what
Witten used. We have two features of the new family. First since the
curves are singular, the differential is written by elementary
functions, not by modular functions as in \cite{GZ}. This makes our
computation much easier. Second, more importantly, we get another pole
besides $\infty$, $\pm 2\Lambda^2$, which is called the {\it
  superconformal point\/} in physics literature. (In the main text, we
change the variable from $u$ to another variable $\phi$ called the
{\it contact term}.)

The contribution of this point to the gauge theory with one matter was
studied by Mari\~no, Moore and Peradze \cite{MMP} at a physical level
of rigor. They argued that the partition function must be regular at
the superconformal point and then this condition leads to sum rules on
Seiberg-Witten invariants, i.e., $X$ must satisfy the following
condition.

\begin{Definition}[\cite{MMP}]\label{def:scs}
  Suppose that a $4$-manifold $X$ is of SW-simple type. We say $X$ is
  of {\it superconformal simple type\/} if $(K_X^2) \ge \chi_h(X) - 3$ or
  \begin{equation}\label{eq:scs}
    \sum_{\fs} (-1)^{(\tilde w_2(X),\tilde w_2(X)+c_1(\fs))/2} 
    \SW(\fs) (c_1(\fs),\alpha)^n = 0
  \end{equation}
  for any integral lift $\tilde w_2(X)$ of $w_2(X)$ and $0\le n\le
  \chi_h(X) - (K_X^2) - 4$.
\end{Definition}

Remark that $(K_X^2) \ge \chi_h(X) - 3$ is the condition which
Feehan-Leness \cite{FLA} assumed to prove Witten's conjecture. It
should be remarked that they also proved that $X$ is of superconformal
simple type if $X$ is abundant under the same technical assumption as
before \cite{FKLM,FLA}.

We analyze the residue of our differential at the superconformal point
and show that 1) the fact that ${\mathscr D}^\xi(\alpha)$, up to sign,
depends only on $(\xi\bmod 2)$ implies that $X$ is of superconformal
simple type, and 2) the differential is regular at the superconformal
point if $X$ is of superconformal simple type. Thus the residue
vanishes at the superconformal point, and hence we prove Witten's
conjecture for a $4$-manifold $X$ of simple type under
Conjecture~\ref{con:main}, and under no assumption for a complex
projective surface $X$.

\subsection*{Acknowledgments}
The authors thank Takuro Mochizuki for explanations of his results and
discussion over years.
The second-named author is grateful to Yuji Tachikawa for discussions
on Seiberg-Witten curves, and to Thomas Leness and Gregory Moore for
useful comments.

The project began when the second and third-named authors were
visiting the International Centre for Theoretical Physics in Aug.~2006.
Part of the calculation was done while the second-named author was
visiting Mathematical Institute of the University of Bonn, and the
third-named author was at Max Planck Institute for Mathematics. The
authors thank all the institutes for the hospitality.

\section{Preliminaries~(I) -- Donaldson and Seiberg-Witten invariants}

\subsection{Donaldson invariants}\label{subsec:Donaldson}

Let $y = (2,\xi,n)\in H^{\operatorname{even}}(X,\Z)$.
We take a Riemannian metric $g$ on $X$ and consider the moduli space
$M(y)$ of irreducible anti-self-dual connections on the adjoint bundle
$\operatorname{ad}(P)$ of a principal $U(2)$-bundle $P$ with $c_1(P) =
\xi$, $c_2(P) = n$.
For a generic metric $g$, this is a manifold of dimension $8n - 2(\xi^2)
- 6\chi_h(X)$. A choice of an orientation of $H^+$, a maximal positive
definite subspace of $H^2(X)$ with respect to the intersection
pairing, gives an orientation on $M(y)$.

Let $\mathcal P\to X\times M(y)$ be a universal $PU(2)$-bundle and
let $\mu\colon H_i(X)\to H^{4-i}(M(y))$ be the $\mu$-map defined
by $\mu(\beta)\defeq -\frac14 p_1(\mathcal P)/\beta$. Then the
Donaldson invariant of $X$ is a polynomial on $H_0(X)\oplus H_2(X)$
defined by
\begin{equation}\label{eq:Don}
  D^{\xi,n}(\alpha^k p^l) = \int_{M(y)} \mu(\alpha)^k \mu(p)^l,
\end{equation}
where $p\in H_0(X)$ is the point class. This is nonzero only when
$k + 2l = 4n - (\xi^2) - 3\chi_h(X)$.
As $M(y)$ is not compact, this integral must be justified by using
the Uhlenbeck compactification of $M(y)$.
When $b_+ \ge 3$ as we assumed, the integral is independent of the
choice of the Riemannian metric $g$.
The moduli space does not change by a twisting of $P$ by a line
bundle, since the adjoint bundle remains the same. Only the orientation
is different.
Thus the integral depends only on $\xi\bmod 2\in H^2(X,\Z/2)$ up to
sign.

We consider the generating function
\begin{equation*}
  D^{\xi}(\exp (\alpha z + px))
  = \sum_{n,k,l} D^{\xi,n}(\alpha^k p^l)\frac{z^k x^l}{k!\,l!}
  \Lambda^{4n-(\xi^2)-3\chi_h(X)}.
\end{equation*}
Since $n$ can be read off from $k$, $l$ as above, the variable
$\Lambda$ is redundant, and we often put $\Lambda = 1$, but it is also
useful when we will consider the partition function.

\begin{Definition}\label{def:KMsimple}
  A $4$-manifold $X$ is of {\it KM-simple type\/} if for any $\xi$ and
  $\alpha$,
  \begin{equation*}
    \frac{\partial^2}{\partial x^2}  D^{\xi}
    = 4\Lambda^4 D^{\xi}.
  \end{equation*}
\end{Definition}

For a $4$-manifold of KM-simple type, we define
\begin{equation}\label{eq:Donseries}
  {\mathscr D}^{\xi}(\alpha)
  \defeq D^{\xi}(\exp(\alpha)(1 + \frac12 p))
  = \sum_{n,k}
   D^{\xi,n}(\alpha^k)\frac1{k!} 
   + 
   \frac12 \sum_{n,k}
   D^{\xi,n}(\alpha^k p)\frac1{k!} 
   .
\end{equation}

Kronheimer-Mrowka's structure theorem \cite{KM} says that there is a
finite distinguished collection of $2$-dimensional cohomology classes
$K_i\in H^2(X,\Z)$ and nonzero rational numbers $\beta_i$ such that
\begin{equation*}
  {\mathscr D}^{\xi}(\alpha)
  = \exp((\alpha^2)/2) 
  \sum_i (-1)^{(\xi,\xi+K_i)/2} \beta_i \exp(K_i,\alpha).
\end{equation*}
Each $K_i$ is an integral lift of the second Stiefel-Whitney class
$w_2(X)$.

\subsection{Complex projective surfaces}\label{subsec:complex}

Now suppose $X$ is a complex projective surface. Take an ample line
bundle $H$ and consider the moduli space $M_H(y)$ of torsion free
$H$-semistable sheaves $E$ with $c_1(E) = \xi$, $c_2(E) = n$. Here we
assume $\xi$ is of type $(1,1)$. We take the orientation on $H^+$
given by $c_1(H)$ and the complex orientation on $H^{0,2}(X)$.

It is known that Donaldson invariants can be defined using $M_H(y)$
instead of $M(y)$ in \eqref{eq:Don} if $M_H(y)$ is of expected
dimension \cite{Li,Morgan}. We define the $\mu$-map by using a
universal sheaf $\cE$ instead of $\mathcal P$, as $\mu(\beta) =
(c_2(\cE) - c_1(\cE)^2/4)/\beta$. The orientation we used above is
differed from the complex orientation by $(-1)^{(\xi,\xi+K_X)/2}$,
where $K_X$ is the canonical class.

If $M_H(y)$ is not of expected dimension, we consider the blow-up at
sufficiently many points $p_1$, \dots, $p_N$ disjoint from cycles
representing $\alpha$, $p$. Then the moduli becomes of expected
dimension on the blow-up if $N$ is sufficiently large. We then use the
blow-up formula as the definition of the integral over $M_H(y)$. See
\cite[\S1.1]{GNY} for detail.

Mochizuki defines the invariants by using the obstruction theory on
the moduli spaces of pairs of sheaves and their sections with a
suitable stability condition. When the vector $y$ is primitive, the
stability is equivalent to the semistability for $M_H(y)$, and
Mochizuki's moduli is a projective bundle over $M_H(y)$. If, furthermore,
the moduli space $M_H(y)$ is of expected dimension, the virtual
fundamental class coincides with the ordinary one, and hence
Mochizuki's invariants are equal to the usual Donaldson invariants
(\cite[Lem.~7.3.5]{Moc}). In order to prove that his invariant
coincides with the above invariant for any $y$, one needs to prove the
blow-up formula for Mochizuki's. It follows a posteriori from our main
result that this is true. It should be possible to give  a more direct
proof by combining the theory of perverse coherent sheaves
\cite{perv1,perv2,perv3} with Mochizuki's method.

\subsection{Seiberg-Witten invariants}

Let $\fs$ be a spin${}^c$ structure and let $c_1(\fs) = c_1(S^+)\in
H^2(X)$ be the first Chern class of its spinor bundle.
\begin{NB}
  If $\fs$ is induced from the complex structure, we have $S^+
  = \Lambda^{0,0}\oplus \Lambda^{0,2}$. Hence $c_1(S^+) = -
  c_1(K_X)$. Witten \cite{Witten} defined the class $x$ as
  $-c_1(S^+)$.
\end{NB}

Let $N(\fs)$ be the moduli space of the solutions of monopole
equations. This is a compact manifold (more precisely, after a
perturbation) of dimension $d(\fs) \defeq (c_1(\fs)^2
- (K_X)^2)/4$.
It has the orientation induced from that of $H^+$ as in the case of
Donaldson invariants.
Let $\mathcal Q$ be the $S^1$-bundle associated with the evaluation
homomorphism from the gauge group at a point in $X$, and $c_1(\mathcal
Q)$ be its first Chern class. The Seiberg-Witten invariant of
$\fs$ is defined as
\begin{equation*}
  \SW(\fs) \defeq \int_{N(\fs)}
  c_1(\fs)^{d(\fs)/2}
\end{equation*}
This is independent of the choice of $g$ and the perturbation.

We call $\fs$ (or $c_1(\fs)$) a Seiberg-Witten class if
$\SW(\fs)\neq 0$. It is known that there are only finitely
many Seiberg-Witten classes.

\begin{Definition}
  A $4$-manifold $X$ is of {\it SW-simple type\/} if $\SW(\mathfrak
  s)$ is zero for all $\fs$ with $d(\fs) > 0$.
\end{Definition}

For $c\in H^2(X;\Z)$ which is a lift of $w_2(X)$, we define $\SW(c)$
as the sum
\begin{equation*}
  \SW(c) = \sum_{c_1(\fs) = c} \SW(\fs).
\end{equation*}

When $X$ is a complex projective surface, it is known that all
Seiberg-Witten classes are of type (1,1). The moduli space $N(\fs)$ is
identified with the moduli space of pairs of a holomorphic line bundle
and its section. It is an unperturbed moduli space, and does not have
the expected dimension $d(\fs)$ in general, but can be equipped with an
obstruction theory to define the invariants \cite{F-M}. It is also
known that $X$ is of SW-simple type.

We will not use so much on results on Seiberg-Witten invariants,
except the most basic one:
\begin{equation*}
  \SW(-\fs) = (-1)^{\chi_h(X)} \SW(\fs),
\end{equation*}
where $-\fs$ is the complex conjugate of the spin${}^c$ structure
$\fs$. (See e.g., \cite[Cor.~6.8.4]{M-book}.)

\subsection{Witten's conjecture}

Witten's conjecture states that if $X$ is of SW-simple type, it is
also of KM-simple type and $\beta_i$, $K_i$ are determined by
Seiberg-Witten invariants. See \eqref{eq:Witten} in Introduction.

\begin{NB}
This is Witten's:
\begin{equation*}
    {\mathscr D}^{\xi}(\alpha)
  = 2^{(K_X^2) - \chi_h(X) + 2} (-1)^{\chi_h(X)}
  \exp((\alpha^2)/2) 
  \sum_{\fs} \SW(\fs) (-1)^{(\xi,\xi+c_1(\fs))/2}
  \exp(c_1(\fs),\alpha).
\end{equation*}

This is a copy of (5.94).
  \begin{equation*}
    \sum_{\xi_1}
(-1)^{\frac{(\xi,\xi+K_X)}{2}+\chi({\cal O}_X)}
\Bigg(
\widetilde{SW}(c_1(\fs))
(-1)^{\frac{(\xi,\xi+c_1(\fs))}{2}}
2^{(K_X^2)-\chi({\cal O}_X)+1}
\exp\left(\frac{1}{2}
(\alpha^2) z^2 +(c_1(\fs),\alpha) z \right)
\Bigg).
  \end{equation*}

  The factor $(-1)^{\frac{(\xi,\xi+K_X)}{2}}$ comes from the sign
    convention. The factor $2$ comes from Mochizuki's definition. (In
    general, his invariant is $1/\rank$ of the usual one.)
\end{NB}

\begin{Example}
  Let $X$ be a $K3$ surface. The Donaldson series is known \cite{Og}:
  \begin{equation*}
    {\mathscr{D}}^{\xi}(\alpha) = (-1)^{\frac{(\xi,\xi)}2}
    \exp ((\alpha^2)/2).
  \end{equation*}
  The only Seiberg-Witten class is $c_1(\fs) = 0$ and
  $\SW(\fs) = 1$.
\begin{NB}
  This is compatible with (5.94) up to multiplication by $2$, and the
  factor $(-1)^{\nicefrac{(\xi,\xi)}2}$, which comes from the
  orientation convention. We have $\chi_h = 2$, $(K_X^2) = 0$.
\end{NB}
\end{Example}

\begin{Example}
  Let $X$ be a quintic surface in $\CP^3$. The Donaldson series was given in
  \cite[Example~2]{KM}:
  \begin{equation*}
    \begin{split}
    {\mathscr{D}}^{0}(\alpha) &= 8 \exp ((\alpha^2)/2) \sinh(K_X,\alpha),
\\
    {\mathscr{D}}^{K_X}(\alpha) &= -8 \exp ((\alpha^2)/2) \cosh(K_X,\alpha).
    \end{split}
  \end{equation*}
  We have $\chi_h = 5$, $(K_X^2) = 5$. The Seiberg-Witten classes are
  $\pm K_X$, and $\SW(-K_X) = 1$, $\SW(K_X) = (-1)^{\chi_h} = -1$ by
  \cite[Prop.~7.3.1]{M-book}.
\begin{NB}
  This is compatible with (5.94) up to multiplication by $2$: In the
  first case $\xi=0$ we have
  \begin{multline*}
    (-1)^{\chi({\cal O}_X)}
    2^{(K_X^2)-\chi({\cal O}_X)+1}
\Biggl(
{\SW}(K_X)
\exp\left(\frac{1}{2}
(\alpha^2) z^2 +(K_X,\alpha) z \right)
\\
 + {\SW}(-K_X)
\exp\left(\frac{1}{2}
(\alpha^2) z^2 -(K_X,\alpha) z \right)
\Biggr).
  \end{multline*}
In the second case $\xi=K_X$, we have
\begin{multline*}
(-1)^{(K_X^2)+\chi({\cal O}_X)}
2^{(K_X^2)-\chi({\cal O}_X)+1}
\Biggl(
{\SW}(K_X)
(-1)^{(K_X^2)}
\exp\left(\frac{1}{2}
(\alpha^2) z^2 +(K_X,\alpha) z \right)
\\
+ {\SW}(-K_X)
\exp\left(\frac{1}{2}
(\alpha^2) z^2 - (K_X,\alpha) z \right)
\Biggr).
\end{multline*}
\end{NB}
\end{Example}

\begin{Example}\label{ex:elliptic}
  Let $X$ be an elliptic surface $X$ without multiple fibers such that
  $H^1(X,\shfO_X) = 0$. Let $f$ be the class of a fiber. We have $K_X
  = \shfO_X(df)$ with $\chi_h(X) = d+2$ and $(K_X^2) = 0$. The
  Donaldson series is given by Fintushel-Stern \cite{FS2}:
  \begin{equation*}
    {\mathscr{D}}^{0}(\alpha) = \exp ((\alpha^2)/2) \sinh^{\chi_h(X)-2}(f,\alpha).
  \end{equation*}
  The Seiberg-Witten invariants were computed by Friedman-Morgan
  \cite{F-M}:
\begin{equation*}
    \SW((2p-d)f) = (-1)^{p}\binom{d}p\quad\text{for $p=0,\dots,d$},
    \qquad
    \SW(c) = 0 \quad \text{for other $c$}.
\end{equation*}
\begin{NB}
  $\SW(-K_X) = \SW(-df) = 1$, $\SW(K_X) = \SW(df) = (-1)^d =
  (-1)^{\chi_h(X)}$. It seems that the convention is compatible with
  other parts.
\end{NB}%
\begin{NB}
  On the other hand, (5.94) gives
  \begin{equation*}
    (-1)^{d} \exp((\alpha^2)/2) 2^{-d-1}
    \sum_{p}(-1)^p \binom{d}{p} \exp((2p-d)(f,\alpha))
    \begin{NB2}
      = (-1)^{d} \exp((\alpha^2)/2) 2^{-d-1} e^{-d(f,\alpha)}
      (1 - e^{2(f,\alpha)})^d
    \end{NB2}
    = \frac12 \exp((\alpha^2)/2) \sinh^{\chi_h(X)-2}(f,\alpha).
  \end{equation*}
  The factor $(-1)^{(K_X^2)} = -1$ is the orientation convention. If
  we remove it, we get $-4 \exp((\alpha^2)/2)\cosh(K_X,\alpha)$.
\end{NB}
\end{Example}

\subsection{Superconformal simple type}

Let us briefly study the superconformal simple type condition
(Definition~\ref{def:scs}) in this subsection. More examples can be
found in \cite{MMP}.

If we take another integral lift $\tilde w_2'(X)$ of $w_2(X)$ in
\eqref{eq:scs}, we have $(-1)^{(\tilde w_2'(X),\tilde
  w_2'(X)+c_1(\fs))/2} = (-1)^{(\tilde w_2(X),\tilde
  w_2(X)+c_1(\fs))}(-1)^{((\tilde w_2'(X)-\tilde w_2(X))/2)^2}$.
\begin{NB}
\begin{equation*}
  \begin{split}
  & (-1)^{(\tilde w_2'(X),\tilde w_2'(X)+c_1(\fs))/2} 
= (-1)^{(\tilde w_2'(X)-\tilde w_2(X)+\tilde w_2(X),\tilde w_2'(X)-\tilde
  w_2(X)+\tilde w_2(X) + c_1(\fs))/2} 
\\
=\; & (-1)^{(\tilde w_2(X),\tilde w_2(X)+c_1(\fs))/2} 
  (-1)^{(\tilde w_2'(X)-\tilde w_2(X),\tilde w_2'(X)-\tilde w_2(X))/2}
  (-1)^{(\tilde w_2'(X)-\tilde w_2(X),\tilde w_2(X))} 
  (-1)^{(\tilde w_2'(X)-\tilde w_2(X),c_1(\fs))/2}.
  \end{split}
\end{equation*}
The last expression can be further modified as
\begin{equation*}
  (-1)^{(\tilde w_2'(X)-\tilde w_2(X),c_1(\fs))/2} 
  = (-1)^{((\tilde w_2'(X)-\tilde w_2(X))/2, c_1(\fs))}
  = (-1)^{((\tilde w_2'(X)-\tilde w_2(X))/2, w_2(X))}
  = (-1)^{((\tilde w_2'(X)-\tilde w_2(X))/2)^2}.
\end{equation*}
\end{NB}%
Therefore it is enough to assume \eqref{eq:scs} for {\it some\/}
integral lift $\tilde w_2(X)$ of $w_2(X)$.
We will consider the case when $X$ is a complex projective surface,
and take $K_X$ as a lift.

If $X$ is a minimal surface of general type, we have the Noether's
inequality $(K_X^2)/2 + 2 \ge
\begin{NB}
  p_g(X) =   
\end{NB}%
\chi_h(X) - 1$. Together with $(K_X^2) \ge 1$, it implies $(K_X^2) \ge
\chi_h(X) -3$. Thus $X$ is of superconformal simple type by definition
(\cite[\S7.1]{MMP}). In fact, it is known that the Seiberg-Witten
classes are $\pm K_X$, and $\SW(-K_X) = 1$, $\SW(K_X) =
(-1)^{\chi_h(X)}$ (see e.g., \cite{M-book}). Therefore we cannot have
a nontrivial identity like \eqref{eq:scs}.

We consider
\begin{equation*}
  {\mathcal{SW}}(\alpha)
  \defeq \sum_\fs (-1)^{\frac{(K_X,K_X+c_1(\fs))}2} \SW(\fs) \exp{(c_1(\fs),\alpha)}.
\end{equation*}
The condition \eqref{eq:scs} is equivalent to ${\mathcal{SW}}(\alpha)$
having zero of order $\ge \chi_h(X) - (K_X^2) - 3$ at $\alpha = 0$. We
have
\begin{equation*}
  {\mathcal{SW}}(-\alpha) = (-1)^{\chi_h(X)-(K_X^2)}{\mathcal{SW}}(\alpha)
\end{equation*}
by $\SW(-c) = (-1)^{\chi_h(X)}\SW(c)$. Therefore ${\mathcal{SW}}$ is
an even (resp.\ odd) function if $\chi_h(X)-(K_X^2)$ is even (resp.\
odd). Therefore the order of zero is automatically $\ge \chi_h(X) -
(K_X^2) - 2$ under the above condition.

\begin{Example}[\protect{\cite[\S7.2]{MMP}}]
  Let $X$ be an elliptic surface $X$ without multiple fibers such that
  $H^1(X,\shfO_X) = 0$. Let $f$ be the class of a fiber. Then $K_X =
  \shfO_X(df)$ with $\chi_h(X) = d+2$. We have $(K_X^2) = 0$. The
  Seiberg-Witten invariants were computed by Friedman-Morgan
  \cite{F-M} as in Example~\ref{ex:elliptic}.
  Therefore
\begin{equation*}
  \mathcal{SW}(\alpha)
  \begin{NB}
  = \sum_{p} (-1)^p \binom{d}p e^{(2p-d)(f,\alpha)}
  = e^{-d(f,\alpha)} \left( 1 - e^{2(f,\alpha)} \right)^d
  \end{NB}
  = (-2)^{\chi_h(X)-2} \sinh^{\chi_h(X)-2}(f,\alpha).
\end{equation*}
This has zero of order $\chi_h(X) - 2$ at $\alpha = 0$. Hence $X$ is
of superconformal simple type. This example can be generalized to the
case of elliptic surfaces with multiple fibers.
\end{Example}

\begin{Example}[\protect{\cite[\S7.3]{MMP}}]
  Consider a one point blow-up $\widehat X\to X$. Let $C$ be the
  exceptional divisor. We have $(K_{\widehat X}^2) = (K_X^2) - 1$ and
  $\chi_h(\widehat X) = \chi_h(X)$.
  Let us add the subscript $X$ and $\widehat X$ to the Seiberg-Witten
  invariants $\SW$ (and $\mathcal{SW}$) in order to clarify which
  surface we consider.
  Then we have $\SW_{\widehat X}(c\pm C) = \SW_X(c)$ for $c\in H^2(X)$
  and the other $\SW_{\widehat X}(c + nC)$ vanish. Therefore
  \begin{equation*}
    \mathcal{SW}_{\widehat X}(\alpha + zC)
    = -2\mathcal{SW}_{X}(\alpha)\sinh(z).
  \end{equation*}
  \begin{NB}
    $K_{\widehat X} = K_X + C$. Therefore
    \begin{equation*}
      (-1)^{(K_{\widehat X},K_{\widehat X}+c\pm C)/2}
      = (-1)^{(K_X,K_X+c)} (-1)^{(C,C\pm C)/2}.
    \end{equation*}
  \end{NB}%
  Thus $\mathcal{SW}_X(\alpha)$ has a zero of order $\ge \chi_h(X) -
  (K_X^2) - 3$ at $\alpha = 0$ if and only if $\mathcal{SW}_{\widehat
    X}$ has a zero of order $\ge \chi_h(X) - (K_X^2) - 2 =
  \chi_h(\widehat X) - (K_{\widehat X}^2) - 3$ at $(\alpha,z) =
  0$. Thus $X$ is of superconformal simple type if and only if so is
  $\widehat X$.
\end{Example}

From these two examples and the classification of complex surfaces, we
conclude that all complex projective surfaces with $p_g > 0$, $b_1 =
0$ are of superconformal simple type. (See \cite[\S7.3]{MMP}.)

\section{Preliminaries~(II) -- Instanton counting}

\subsection{Framed moduli spaces of torsion free sheaves}

We briefly recall the framed moduli spaces of torsion free sheaves on
$\proj^2$. See \cite[Chap.~2]{Lecture} and \cite[\S3]{NY2} for more
detail.

Let $\linf$ be the line at infinity of $\proj^2$. A {\it framed
  sheaf\/} $(E,\varphi)$ on $\proj^2$ is a pair
\begin{itemize}
\item a coherent sheaf $E$, which is locally free in a neighborhood
  of $\linf$, and
\item an isomorphism $\Phi\colon E|_{\linf}\to \shfO_{\linf}^{\oplus
    r}$, where $r$ is the rank of $E$.
\end{itemize}
Let $M(r,n)$ be the moduli space of framed sheaves $(E,\varphi)$ of
rank $r$ and $c_2(E) = n$. This is a nonsingular quasi-projective
variety of dimension $2rn$. It has an ADHM type description.

Let $M_0(r,n)$ be the corresponding Uhlenbeck partial
compactification. There is a projective morphism $\pi\colon M(r,n)\to
M_0(r,n)$. This is a crepant resolution of $M_0(r,n)$.

Let $\C^*\times\C^*$ act on $\proj^2$ by $[z_0:z_1:z_2]\mapsto
[z_0:tz_1:tz_2]$, where the line $\linf$ at infinity is $z_0 = 0$. Let
$T$ be the maximal torus of $\SL_r(\C)$ consisting of diagonal
matrices and let $\hT = \C^*\times\C^*\times T$. It acts on $M(r,n)$
as follows: the first factor $\C^*\times\C^*$ acts by pull-backs of
sheaves $E$, and $T$ acts by the change of the framing $\varphi$.
It also acts on $M_0(r,n)$ and $\pi$ is equivariant.
We consider the equivariant homology group $H_*^{\hT}(M(r,n))$,
$H_*^{\hT}(M_0(r,n))$. Let $[M(r,n)]$, $[M_0(r,n)]$ be the fundamental
classes.

Fixed points $M(r,n)^{\hT}$ are parametrized by $r$-tuples of Young
diagrams $\vec{Y} = (Y_1,\dots,Y_r)$. Each $Y_\alpha$ corresponds to a
monomial ideal $I_\alpha$ of the polynomial ring $\C[x,y]$, and gives
a framed rank $1$ sheaf. The direct sum $I_1\oplus\cdots\oplus I_r$ is
a torus fixed point.
The equivariant Euler class $\Eu(T_{\vec{Y}}M(r,n))$ of the tangent
space of $M(r,n)$ at $\vec{Y}$ is given by a certain combinatorial
formula (see \cite[\S\S3,4]{NY2}), but its explicit form will not be
used in this paper.
On the other hand, $M_0(r,n)$ has a unique fixed point: the rank $r$
trivial sheaf together with a singularity concentrated at the origin.

Let $\ve_1$, $\ve_2$, $a_1$,\dots, $a_r$ (with $a_1+\cdots +a_r = 0$)
be the coordinates of the Lie algebra of $\hT$. We also use the
notation $\vec{a} = (a_1,\dots,a_r)$. The equivariant cohomology
$H^*_{\hT}(\pt)$ of a single point is naturally identified with the
polynomial ring $S(\hT) \defeq \C[\ve_1,\ve_2,a_2,\dots,a_r]$.  Let
$\mathfrak S(\hT)$ be its quotient field. The localization theorem for
the equivariant homology group says that the push-forward homomorphism
$\iota_{0*}$ of the inclusion $M_0(r,n)^{\hT}\to M_0(r,n)$ induces an
isomorphism of equivariant homology groups after tensoring by $\mathfrak
S(T)$. Since $M_0(r,n)^{\hT}$ is a single point, as we remarked above,
we have
\begin{equation*}
  \iota_{0*}\colon
  H_*^{\hT}(M_0(r,n)^{\hT})\otimes_{S(T)}\mathfrak S(T) = \mathfrak S(T) 
  \xrightarrow{\cong}H_*^{\hT}(M_0(r,n))\otimes_{S(T)}\mathfrak S(T).
\end{equation*}
Let $\iota_{0*}^{-1}$ be the inverse of $\iota_{0*}$.

We also have an isomorphism
\begin{equation*}
  \iota_{*}\colon 
  H_*^{\hT}(M(r,n)^{\hT})\otimes_{S(T)}\mathfrak S(T)
  = \mathfrak S(T)^{\oplus \#\{\vec{Y}\}}
  \xrightarrow{\cong}H_*^{\hT}(M(r,n))\otimes_{S(T)}\mathfrak S(T),
\end{equation*}
where $\iota\colon M(r,n)^{\hT}\to M(r,n)$. By the functoriality of
pushforward homomorphisms, we have
\begin{equation}\label{eq:commute}
   \sum_{\vec{Y}}\circ \iota_*^{-1} = \iota_{0*}^{-1}\circ\pi_*,
\end{equation}
where $\sum_{\vec{Y}}$ is the map
\(
   \mathfrak S(T)^{\oplus \#\{\vec{Y}\}}\to \mathfrak S(T)
\)
defined by taking sum of components.

Since $M(\Nc,n)$ is smooth, $\iota_*^{-1}$ is given by
\begin{equation*}
  \frac{\iota^*(\bullet)}{\Eu(T_{\vec{Y}}M(\Nc,n))},
\end{equation*}
where $\iota^*$ is the pull-back homomorphism of equivariant
cohomology groups, considered as a map between equivariant homology
groups via Poincar\'e duality.

Nekrasov's deformed partition function for the {\it pure\/} gauge
theory is defined as the generating function of
$\iota_{0*}^{-1}\pi_*[M(r,n)]$, where we let $n$ run. By the discussion
above, it is the generating function of $1/\Eu(T_{\vec{Y}}M(\Nc,n))$ for
all $\vec{Y}$. It was introduced in \cite{Nek} and studied in
\cite{NO,NY1,NY2}.

\subsection{The partition function for the theory with fundamental
  matters}

We need a variant of the partition function. It is called the
partition function for the theory with fundamental matters in the
physics literature.

Over the moduli space $M(\Nc,n)$, we have a natural vector bundle $\Vcal$,
whose fiber at $(E,\varphi)$ is
\(
   H^1(E(-\ell_\infty)).
\)
It has rank $n$.
\begin{NB}
In the ADHM description, this is the bundle associated with the
principal $\GL(V)$-bundle $\mu^{-1}(0)^{\mathrm{s}} \to
\mu^{-1}(0)^{\mathrm{s}}/ \GL(V) = M(\Nc,n)$.
\end{NB}%
If $\cE$ denotes the universal sheaf on $\proj^2\times M(\Nc,n)$, we
have $\Vcal = R^1 q_{2*}(\cE\otimes q_1^*(\shfO(-\linf)))$, where
$q_1$, $q_2$ are the projection from $\proj^2\to M(\Nc,n)$ to the
first and second factors respectively.

In fact, a computation shows that it is more natural to replace
$H^1(E(-\linf))$ by the $L^2$-kernel of the Dirac operator $D_{\!A}
\colon E\otimes S^- \to E\otimes S^+$, where $A$ is the instanton
corresponding to $E$ (assuming it is locally free). This is simply
given by tensoring the half canonical bundle $K_{\C^2}^{1/2}$ of
$\C^2$, i.e.\ a trivial line bundle with weight
$e^{-(\ve_1+\ve_2)/2}$. This makes sense even if $E$ is not locally
free, so we can consider it as a definition of the kernel of the Dirac
operator.

For a positive integer $N_f$, we consider a vector space $M =
\C^{N_f}$, called the {\it flavor space}. The group $\GL(M)$ naturally
acts on $M$. Let $T_M$ be the diagonal subgroup. Let $\vec{m} =
(m_1,\dots, m_{N_f})$ denote an element in $\operatorname{Lie}T_M$. 
We consider the equivariant class
\(
   \Eu(\Vcal\otimes K_{\C^2}^{1/2}\otimes M) \cap [M(\Nc,n)].
\)
This has the degree (or virtual dimension) $(2r-N_f)n$.
The theory is called {\it conformal\/} when $N_f = 2r$ and {\it
  asymptotically free\/} when $N_f < 2r$. We assume $N_f < 2r$
hereafter. We set $\bgamma \defeq 2\Nc - N_f$.

We define the instanton part of the partition function
\begin{equation}\label{eq:partition}
  \Zin(\ve_1,\ve_2,\vec{a},\vec{m};\Lambda)
  = \sum_{n=0}^\infty \Lambda^{\bgamma n}
  \iota_{0*}^{-1} \pi_* \left(\Eu(\Vcal\otimes K_{\C^2}^{1/2}\otimes M)
    \cap [M(\Nc,n)]\right),
\end{equation}
where $\Lambda$ is a formal variable.

Using \eqref{eq:commute} we can replace $\iota_{0*}^{-1} \pi_*$ by
$\sum_{\vec{Y}}\iota_*^{-1}$. Then we get
\begin{equation}\label{eq:sum}
  \Zin(\ve_1,\ve_2,\vec{a},\vec{m};\Lambda)
  = \sum_{n=0}^\infty \Lambda^{\bgamma |\vec{Y}|}
  \frac{\Eu(\left.\Vcal\right|_{\vec{Y}}\otimes K_{\C^2}^{1/2}\otimes M)}
    {\Eu(T_{\vec{Y}}M(\Nc,n))},
\end{equation}
where $\left.\Vcal\right|_{\vec{Y}}$ is the fiber of $\Vcal$ at the
fixed point $\vec{Y}$, and $|\vec{Y}|$ is the sum of numbers of boxes
in Young diagrams $Y_\alpha$. The right hand side has a combinatorial
expression, which will be not used in this paper.

It is known that
\begin{enumerate}
\item $\ve_1\ve_2\log\Zin(\ve_1,\ve_2,\vec{a},\vec{m};\Lambda)$ is regular
at $\ve_1$, $\ve_2 = 0$, and hence has the expansion
\begin{multline}\label{eq:expand}
 \Fin_0(\vec{a},\vec{m};\Lambda)
 + (\ve_1+\ve_2)H^{\text{\rm inst}}(\vec{a},\vec{m};\Lambda)
 + \ve_1\ve_2 \Ain(\vec{a},\vec{m};\Lambda) 
 + \frac{\ve_1^2+\ve_2^2}3 \Bin(\vec{a},\vec{m};\Lambda)
 + \cdots.
\end{multline}
\item The leading term $\Fin_0(\vec{a},\vec{m};\Lambda)$ is the
  instanton part of the Seiberg-Witten prepotential.
\end{enumerate}

For the pure theory (i.e., $N_f = 0$) these were proved by the second
and third-named authors \cite{NY1}, Nekrasov-Okounkov \cite{NO}, and
Braverman-Etingof \cite{BE} independently. The proof in \cite{NO}
works also for theories with matters.
We also need to know the next three terms $H$, $A$, $B$. These were
computed in \cite{NY2} for the pure theory.
The corresponding results for our case $\Nc=2$, $N_f=1$ along the
argument in \cite{NY1,NY2} will be explained below
(\secref{sec:curve}). We need to use the theory which we have
developed in \cite{perv1,perv2,perv3}.
In particular, we have $H^{\text{\rm inst}} \equiv 0$, which means
that the partition function is `topological': $H^{\text{\rm inst}}$ is
coupled with $\ve_1+\ve_2 = -c_1(K_{\C^2})$, which depends on the
complex structure, but it vanishes.

Since we will only consider the case $\Nc=2$, $N_f = 1$, we denote
$a_2$, $m_1$ simply by $a$, $m$ respectively. In application to
Mochizuki's formula below, we need to specialize $a=m$. This is
well-defined: Setting $a=m$ means that we restrict the acting group
from $\hT\times T_M$ to a smaller subgroup. But the smaller subgroup
still has the same fixed points (as $T_M$ acts trivially), and the
fixed point formula can be specialized.

In view of Conjecture~\ref{con:main}, it is desirable to have a direct
definition of the partition function in terms of the Uhlenbeck
compactification $M_0(r,n)$, not appealing to the algebro-geometric
object $M(r,n)$. Since $\Vcal$ is not a pull-back from $M_0(r,n)$,
this is a nontrivial problem. If we consider $M_0(r,n)$ as an affine
algebraic variety, then
\(
  \pi_* \left(\Eu(\Vcal\otimes K_{\C^2}^{1/2}\otimes M)
    \cap [M(\Nc,n)]\right)
\)
is a limit of the formal $\hT\times T_M$-character of the space of
sections of certain virtual sheaves on $M_0(\Nc,n)$ as in
\cite[\S4]{NY1}. It should be possible to replace this virtual sheaf
by a complex of vector bundles.

\section{Mochizuki's formula and the partition function}

As we mentioned in Introduction, we will use Mochizuki's formula
relating Donaldson invariants and Seiberg-Witten invariants. Before
stating his formula, let us briefly explain the idea behind its proof.
A reader can safely jump to \subsecref{subsec:Mochizuki} if he/she
accepts Mochizuki's formula. But the authors encourage the reader to
learn Mochizuki's beautiful ideas. Of course he/she should read the
book \cite{Moc} for more detail.

When $X$ is a complex projective surface, Mochizuki first developed
the obstruction theory for moduli spaces of pairs of sheaves and
sections and related spaces.
Then he obtained a general machinery to write down the difference of
invariants for two moduli spaces defined with different stability
condition. A point is to introduce a $\C^*$-equivariant obstruction
theory on the `master space' containing two moduli spaces as
$\C^*$-fixed point loci. He integrated the class $\exp(\mu(\alpha z +
px))\cup a$ over the master space, where $a$ is the generator of the
equivariant cohomology group $H^*_{\C^*}(\pt)$ of a single point. 
Since the integral vanishes at the nonequivariant limit $a=0$, the sum
of residues of fixed point loci contributions is zero by the
Atiyah-Bott-Lefschetz fixed point formula. This gives the difference
of the invariants as the sum of residues of `exceptional' fixed points
loci contributions. The exceptional fixed points are products of lower
rank sheaves and pairs. Up to this point, the framework is essentially
the same as the $\SO(3)$-monopole cobordism program, except for a
systematic usage of the obstruction theory. But a crucial difference
is that Mochizuki's obstruction theory enables him to treat moduli
spaces as if they are {\it smooth}. In particular, his `residues' are
given explicitly in terms of equivariant Euler classes of virtual
normal bundles.

He applied this theory to the case of moduli spaces of rank $2$ pairs.
When a stability condition is suitably chosen, moduli spaces of pairs
are projective bundles over moduli of genuine sheaves, thus the
invariants are reduced to Donaldson invariants. On the other hand, for
another stability condition, moduli spaces become the empty set. The
difference of the invariants, which is just Donaldson invariants, is
given by the sum of residues of equivariant integrals over other
`exceptional' fixed point loci, which are moduli spaces of pairs of
rank $1$ sheaves with sections of one factor.
These exceptional contribution can be identified with a product of the
Seiberg-Witten invariant and an equivariant integral over the product
$X^{[n_1]}\times X^{[n_2]}$ of Hilbert schemes of points in $X$.
This is because rank $1$ sheaves are just ideal sheaves twisted by line
bundles.
The class $Q$ appearing the formula below is the equivariant Euler
class of the normal bundle mentioned above.

\subsection{Mochizuki's formula}\label{subsec:Mochizuki}

Let $y = (2,\xi,n)$, $\alpha$, $p$, $z$, $x$ as in the definition of
Donaldson invariants (\subsecref{subsec:Donaldson}).
Suppose that we have decompositions $\xi = \xi_1 + \xi_2$, $n -
(\xi_1,\xi_2) = n_1 + n_2$. We denote by $e^{\xi_i}$ the holomorphic
line bundle whose first Chern class is $\xi_i$. Let $\mathcal I_i$
(resp.\ $\shfO_{Z_i}$) denote the universal ideal sheaf (resp.\
subscheme) over $X\times X^{[n_i]}$. Their pull-backs to $X\times
X^{[n_1]}\times X^{[n_2]}$ are denoted by the same notation.
Let $q_2\colon X\times X^{[n_1]}\times X^{[n_2]}\to X^{[n_1]}\times
X^{[n_2]}$ be the projection.

Let $\C^*$ act trivially on $X^{[n_1]}\times X^{[n_2]}$ and consider
the equivariant cohomology group $H^*_{\C^*}(X^{[n_1]}\times
X^{[n_2]}) \cong H^*(X^{[n_1]}\times X^{[n_2]})[a]$, where $a$ is the
variable for $H^*_{\C^*}(\pt)$, i.e., $H^*_{\C^*}(\pt) = \C[a]$.
We consider the following equivariant cohomology classes on
$X^{[n_1]}\times X^{[n_2]}$:
\begin{equation*}
  \begin{split}
    P(\cal I_1 e^{\xi_1-a} \oplus \cal I_2 e^{\xi_2+a}) &\defeq
    \exp(-\ch_2(\cal I_1 e^{\xi_1-a-\frac{\xi}{2}} \oplus 
    \cal I_2 e^{\xi_2+a-\frac{\xi}{2}})
    /(\alpha z+px)),
\\
   Q(\cal I_1 e^{\xi_1-a} \oplus \cal I_2 e^{\xi_2+a})
&\defeq \Eu(-\Ext^*_{q_2}(\cal I_1 e^{\xi_1-a}, 
\cal I_2 e^{\xi_2+a}))
\Eu(-\Ext^*_{q_2}(\cal I_2 e^{\xi_2+a}, 
\cal I_1 e^{\xi_1-a})),
  \end{split}
\end{equation*}
where $\Ext^*_{q_2}$ is the alternating sum $\Ext^0_{q_2} - \Ext^1_{q_2} +
\Ext^2_{q_2}$, and $\Ext^\bullet_{q_2}$ is the derived functor of the composite
$q_{2*}\circ{\mathcal H}om$.

Roughly speaking, $Q$ is the equivariant Euler class of the virtual
normal bundle of $X^{[n_1]}\times X^{[n_2]}$ in $M_H(y)$. Here one
should consider that the embedding is given by $(I_1,I_2)\mapsto
e^{\xi_1}I_1\oplus e^{\xi_2}I_2$. And $P$ is the restriction of the
integrand appearing in Donaldson invariants. But the precise
formulation requires the master space, and is omitted in this paper.

Note that $Q$ is invertible in $H^*(X^{[n_1]}\times
X^{[n_2]})[a,a^{-1}]$ as it has a form
\(
  Q(\cal I_1 e^{\xi_1-a} \oplus \cal I_2 e^{\xi_2+a})
  = a^{N} + (\text{lower degree in $a$})
\)
for some $N$. We consider the following class
in $H^*(X^{[n_1]}\times X^{[n_2]})[a,a^{-1}]$:
\begin{multline*}
  \widetilde{\varPsi}(\xi_1,\xi_2,n_1,n_2;a)
\\
  \defeq 
\frac{P(\cal I_1 e^{\xi_1-a} \oplus \cal I_2 e^{\xi_2+a})}
{Q(\cal I_1 e^{\xi_1-a} \oplus \cal I_2 e^{\xi_2+a})}
\frac{\Eu(H^*(({\cal O}/{\cal I}_1) e^{\xi_1}))
\Eu(H^*(({\cal O}/{\cal I}_2) e^{\xi_2+2a}))}
{(2a)^{n_1 +n_2-p_g}},
\end{multline*}
where $H^*({\cal O}/{\cal I}_i)$ is the alternating sum of the higher
direct image sheaves $R^\bullet q_{2*}({\cal O}/{\cal I}_i)$.
This is the same as Mochizuki's $\varPsi$ (\cite[\S1.4.2]{Moc}), except
that we do not take the residue with respect to $a$. Therefore we put
`$\tilde{\ }$' in the notation.

We set
\begin{equation*}
\widetilde{\cal A}(\xi_1,y;a)=
2^{1-\chi(y)}\sum_{n_1 +n_2=n-(\xi_1, \xi_2)}
\int_{X^{[n_1]} \times X^{[n_2]}}
  \widetilde\varPsi(\xi_1,\xi_2,n_1,n_2;a),
\end{equation*}
where $\chi(y)$ is the Euler characteristic of the class $y$.
By Riemann-Roch, we have
\begin{equation*}
  \chi(y) = \frac{(\xi,\xi-K_X)}{2}+2\chi_h(X)-n.
\end{equation*}

\begin{Theorem}[\protect{\cite[Th.~1.4.6]{Moc}}]\label{thm:Mochizuki}
  Assume that $\chi(y) > 0$, $(\xi,H)/2 > (K_X,H)$ and $(\xi,H) >
  (c_1(\fs) + K_X,H)$ for any Seiberg-Witten class $\fs$. Then we have
  \begin{equation*}
    \frac12
    \int_{M_H(y)} \exp(\mu(\alpha z + px))
    = \sum_{\xi_1} \SW(\tilde\xi_1)
    \Res_{a=\infty} \widetilde{\cal A}(\xi_1,y;a)da,
  \end{equation*}
  where $\tilde\xi_1 \defeq 2\xi_1 - K_X$.
\end{Theorem}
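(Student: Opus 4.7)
The plan is to follow Mochizuki's wall-crossing strategy using a $\C^*$-equivariant master space of pairs. First, I would replace $M_H(y)$ by moduli spaces $\mathfrak{M}^{\tau}(y)$ of oriented pairs $(E,\varphi)$: a rank-two torsion-free sheaf $E$ of type $y$, a section $\varphi$ of a sufficient twist of $E$, and a trivialization of $\det E$, stable with respect to a real parameter $\tau$. For $\tau$ large, $\tau$-stability forces $H$-semistability of $E$ and $\mathfrak{M}^{\tau \gg 0}(y)$ becomes a projective bundle over $M_H(y)$, from which $\tfrac12 \int_{M_H(y)} \exp(\mu(\alpha z + px))$ is extracted via the projective bundle formula; for $\tau$ small the moduli is empty. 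The point of working with pairs is that $\mathfrak{M}^{\tau}(y)$ carries a natural perfect obstruction theory producing a virtual class $[\mathfrak{M}^{\tau}(y)]^{\mathrm{vir}}$ regardless of whether $M_H(y)$ is of expected dimension.

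Second, I would form the master space $\mathfrak{M}(y)$ interpolating between two adjacent $\tau$-chambers in the style of Thaddeus, so that it carries a $\C^*$-action whose fixed locus is the disjoint union of the two chamber moduli and ``exceptional'' components indexed by splittings $E = e^{\xi_1}\cal I_1 \oplus e^{\xi_2}\cal I_2$ with $\xi_1+\xi_2 = \xi$, $(\cal I_1, \cal I_2) \in X^{[n_1]} \times X^{[n_2]}$, $n_1+n_2 = n-(\xi_1,\xi_2)$, and $\varphi$ supported on one summand. The master space inherits a $\C^*$-equivariant perfect obstruction theory, and virtual Atiyah--Bott--Lefschetz localization gives
\begin{equation*}
\int_{[\mathfrak{M}(y)]^{\mathrm{vir}}} \exp(\mu(\alpha z + px))
= \sum_F \int_{[F]^{\mathrm{vir}}} \frac{\iota_F^* \exp(\mu(\alpha z + px))}{\Eu(N^{\mathrm{vir}}_F)}.
\end{equation*}
The left-hand side is polynomial in the equivariant parameter $a$, so $\Res_{a=\infty}$ of the right-hand side vanishes, yielding a relation among the chamber contributions and the exceptional ones.

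Third, the big-$\tau$ chamber contributes the Donaldson-type integral on $M_H(y)$ (with the factors $\tfrac12$ and $2^{1-\chi(y)}$ arising from the projective bundle structure and Mochizuki's rank-two normalization), the small-$\tau$ chamber contributes zero, and on each exceptional component one identifies the Seiberg--Witten moduli $N(\fs)$ with $\P H^0(e^{\xi_1})$ for the spin${}^c$ structure having $c_1(\fs)=2\xi_1-K_X$, collapsing the equivariant integral to $\SW(2\xi_1-K_X)$ times an integral over $X^{[n_1]}\times X^{[n_2]}$. Matching the integrand with $\widetilde\varPsi(\xi_1,\xi_2,n_1,n_2;a)$ is then a factor-by-factor calculation: the $P$-factor comes from pulling back $\exp(\mu(\alpha z + px))$ through Grothendieck--Riemann--Roch applied to $\cal I_1 e^{\xi_1-a} \oplus \cal I_2 e^{\xi_2+a}$; the $Q$-factor is the equivariant Euler class of the $\Ext^{\bullet}_{q_2}$ terms recording the deformation theory of the split sheaf; the Euler classes of $H^*((\shfO/\cal I_i)e^{\xi_i+\cdots})$ encode the deformation theory of the section; and the denominator $(2a)^{n_1+n_2-p_g}$ absorbs the normal bundle of $N(\fs)$ inside $\mathfrak{M}(y)$.

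The main obstacle is constructing the $\C^*$-equivariant perfect obstruction theory on the master space and verifying its compatibility with the obstruction theories on each fixed component, which is where the bulk of \cite{Moc} is invested. A secondary difficulty is controlling which splittings actually contribute, and here the hypotheses $\chi(y)>0$, $(\xi,H)/2>(K_X,H)$, and $(\xi,H)>(c_1(\fs)+K_X,H)$ enter precisely to make the large-$\tau$ moduli a projective bundle, to force the small-$\tau$ moduli to be empty, and to restrict the exceptional splittings so that only classes $\tilde\xi_1=2\xi_1-K_X$ with $H^0(e^{\xi_1})\neq 0$ appear, producing the finite sum over Seiberg--Witten classes on the right.
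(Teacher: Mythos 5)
Your outline reproduces the strategy of Mochizuki's proof exactly as this paper sketches it before \S3.1 (master space of oriented pairs interpolating between a chamber where the pair moduli is a projective bundle over $M_H(y)$ and one where it is empty, $\C^*$-equivariant virtual localization, and identification of the exceptional fixed loci with Seiberg--Witten data times integrals over $X^{[n_1]}\times X^{[n_2]}$). Note that the paper does not prove this theorem itself but quotes it from \cite[Th.~1.4.6]{Moc}, so the comparison is with Mochizuki's argument, and your proposal is essentially that argument.
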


Let us give several remarks.

\begin{Remarks}
  (1) The left hand side is Mochizuki's definition of the invariant
  using the obstruction theory. It is equal to the usual Donaldson
  invariant if $y$ is primitive and $M_H(y)$ is of expected
  dimension. This is not an essential assumption, as we explained in
  \subsecref{subsec:complex}.

(2) Mochizuki took the residue at $a=0$,
instead of $a=\infty$. But ours is just the negative of Mochizuki's,
as $\widetilde{\cal A}(\xi_1,y;a)$ is in $\C[a,a^{-1}]$.

(3) The factor $1/2$ in the left hand side comes from Mochizuki's
convention. He considered the integration over the moduli space of
{\it oriented\/} sheaves. There is a natural \'etale proper morphism
from the oriented moduli space to the usual one of degree
$(\rank)^{-1} = 1/2$.

(4) The assumption is satisfied if we replace $y$ by $y e^{kH}$ for
sufficiently large $k$. But it is not clear, a priori, that the right
hand side is independent of $k$. This will become important for our
later analysis of the residue of $\widetilde{\mathcal A}$.

(5) Mochizuki denoted the usual Seiberg-Witten invariant by
$\widetilde{\SW}$ and set $\SW(\xi_1) =
\widetilde{\SW}(2\xi_1-K_X)$. We keep $\SW$ for the notation of the
usual Seiberg-Witten invariant. On the other hand, the Seiberg-Witten
class $2\xi_1 - K_X$ will naturally appears in the Witten's formula
\eqref{eq:Witten}. Therefore we have denoted it by $\tilde\xi_1$. Thus
our $\SW(\tilde\xi_1)$ is Mochizuki's $\SW(\xi_1)$.

(6) Since the expected dimension $\dim M_H(y)$ is
$4n-(\xi^2)-3\chi_h(X)$, we have
\begin{equation}\label{eq:chiy}
  4\chi(y) = ((\xi-K_X)^2) - (K_X^2) - \dim M_H(y) + 5\chi_h(X).
\end{equation}
\begin{NB}
  By Riemann-Roch, we have
  \begin{equation*}
    \chi(y) = \int_X (2 + \xi + \frac12\xi^2 - n)(1 - \frac12 K_X + \Todd_2(X))
    = 2\chi_h(X) + (\xi,\xi-K_X)/2 - n.
  \end{equation*}
\end{NB}
\end{Remarks}

\subsection{Formula in terms of the partition function}
\label{subsec:partition}

Now we prove our first main result. Recall $y = (2,\xi,n)$. Let us
introduce the generating function of the  $\widetilde{\mathcal
  A}(\xi_1,y;a)$:
\begin{equation*}
  \mathcal B(\xi_1,\xi;a) \defeq \sum_n \Lambda^{4n-(\xi^2)-3\chi_h(X)}
  \widetilde{\mathcal A}(\xi_1,(2,\xi,n);a).
\end{equation*}

\begin{Theorem}\label{thm:partition}
  We have
  \begin{multline*}
    \mathcal B(\xi_1,\xi;a) da
    = - \frac{da}a
    (-1)^{(\xi,\xi+K_X)/2 + (K_X,K_X + \tilde\xi_1)/2 +\chi_h(X)} 
    2^{-2\chi_h(X)-
      (\xi-K_X-\tilde\xi_1,\xi-K_X)/2}\\
    \times \left(\frac{2a}\Lambda\right)
    ^{((\xi-K_X)^2)+(K_X^2)+3\chi_h(X)-2(\xi-K_X,\tilde\xi_1)}
    \exp\left({-(\xi-K_X-\tilde\xi_1,\alpha)az-a^2 x}\right)\\
    \times \exp \left[
      \frac{1}{3}\frac{\partial \Fin_0}{\partial \log \Lambda}x
      +
      \left(\frac{1}{8}\frac{\partial^2 \Fin_0}{\partial a^2}
        +\frac{1}{4}\frac{\partial^2 \Fin_0}{\partial a \partial m}
        +\frac{1}{8}\frac{\partial^2 \Fin_0}{\partial m^2} \right)
      ((\xi-K_X)^2) \right.\\
    -\frac{1}{4}\left(\frac{\partial^2 \Fin_0}{\partial a \partial m}
      +\frac{\partial^2 \Fin_0}{\partial a^2} \right)
    (\xi-K_X,\tilde\xi_1)\\
    +\frac{1}{6}\left(
      \frac{\partial^2 \Fin_0}{\partial a \partial \log \Lambda}+
      \frac{\partial^2 \Fin_0}{\partial m \partial \log \Lambda} \right)
    (\xi-K_X,\alpha) z
    -\frac{1}{6}
    \frac{\partial^2 \Fin_0}{\partial a \partial \log \Lambda}
    (\tilde\xi_1,\alpha) z\\
    \left.+\frac{1}{18}
      \frac{\partial^2 \Fin_0}{(\partial \log \Lambda)^2}
      (\alpha^2) z^2
      + \chi_h(X)(12\Ain-8\Bin)
      +(K_X^2)
      \left(\Bin-\Ain+\frac{1}{8}\frac{\partial^2 \Fin_0}{\partial a^2}
        \right)\right],
  \end{multline*}
  where $\tilde\xi_1 = 2\xi_1 - K_X$ as above, and the derivatives of
  $\Fin_0$, $\Ain$ and $\Bin$ are evaluated at $(a,m,\Lambda) =
  (a,a,\Lambda^{4/3}a^{-1/3})$.
\end{Theorem}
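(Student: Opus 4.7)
The plan is to follow closely the strategy of \cite{GNY}, where the authors proved a wall-crossing formula for Donaldson invariants with $b_+ = 1$ using Nekrasov's partition function for the pure theory; here the target is the $N_f = 1$ theory with the specialization $m = a$, but the structure of the argument is parallel. First, I would observe that the integrand $\widetilde\varPsi(\xi_1,\xi_2,n_1,n_2;a)$ is built from tautological classes on $X^{[n_1]} \times X^{[n_2]}$ associated with the universal ideals $\cal I_i$ and the line bundles $e^{\xi_i}$. By the Ellingsrud--G\"ottsche--Lehn cobordism argument \cite{EGL}, integrals of such universal tautological expressions depend on $X$ only through the basic numerical invariants $(\xi^2), (\xi,K_X), (K_X^2), \chi_h(X), (\xi,\alpha), (\alpha^2)$, etc. Both sides of the claimed identity have this form in the exponents, so it suffices to check the identity on a collection of toric surfaces that separates these invariants, e.g.\ $\proj^2$ and $\proj^1 \times \proj^1$ with various polarizations.

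On a toric surface the $2$-torus $T_X$ acts on each $X^{[n_i]}$ with isolated fixed points indexed by Young-diagram decorations of $X^{T_X}$, and Atiyah--Bott localization reduces $\int_{X^{[n_1]}\times X^{[n_2]}}\widetilde\varPsi$ to a sum over $\{(Y^{(p)}_1,Y^{(p)}_2)\}_{p \in X^{T_X}}$ of products of local contributions. At each $p$ with tangent weights $(\ve_1^{(p)},\ve_2^{(p)})$, the local factor is identified, term by term in the fixed-point formula \eqref{eq:sum}, with the summand of Nekrasov's partition function $\Zin$ for $\Nc = 2$, $N_f = 1$, Coulomb parameter $\vec a = (-a,+a)$ (reflecting $\xi = \xi_1 + \xi_2$) and mass $m = a$. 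The specialization $m = a$ emerges from the factor $\Eu(H^*((\shfO/\cal I_2) e^{\xi_2+2a}))$ in $\widetilde\varPsi$ via the short exact sequence $0 \to \cal I_2 \to \shfO \to \shfO/\cal I_2 \to 0$ together with the definition $\Vcal = R^1 q_{2*}(\cE\otimes q_1^*\shfO(-\linf))$, after matching equivariant weights at the fixed point.

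After this identification one obtains a product formula
\begin{equation*}
\mathcal B(\xi_1,\xi;a)\,da = (\text{explicit prefactor})\cdot \prod_{p \in X^{T_X}} \Zin\!\bigl(\ve_1^{(p)},\ve_2^{(p)},a,a;\Lambda_p\bigr)\,da,
\end{equation*}
where $\Lambda_p$ depends on the $T_X$-weights of $e^{\xi-K_X}$ at $p$ and on $a$. Taking logarithms and inserting the expansion \eqref{eq:expand}, together with $H^{\text{\rm inst}}\equiv 0$ for the $\Nc = 2,\,N_f=1$ theory, converts the product into
\begin{equation*}
\sum_{p \in X^{T_X}} \frac{1}{\ve_1^{(p)}\ve_2^{(p)}}\Bigl(\Fin_0 + \ve_1^{(p)}\ve_2^{(p)}\Ain + \tfrac{1}{3}\bigl((\ve_1^{(p)})^2+(\ve_2^{(p)})^2\bigr)\Bin + \cdots\Bigr),
\end{equation*}
each term evaluated at $(a,a,\Lambda_p)$. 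Standard toric-equivariant identities (such as $\sum_p 1/(\ve_1^{(p)}\ve_2^{(p)}) = \chi(X)$, with analogous higher moments producing $\chi_h(X)$ and $(K_X^2)$) then collect these local contributions into the explicit combination of $\Fin_0$-derivatives, $\Ain$ and $\Bin$ appearing on the right-hand side. The rescaling $\Lambda \mapsto \Lambda^{4/3}a^{-1/3}$ is dictated by matching the degree $\bgamma = 2\Nc-N_f = 3$ of the Nekrasov instanton parameter with the degree $4$ of the Donaldson $\Lambda$, modulated by the factor $(2a)^{n_1+n_2-p_g}$ in the denominator of $\widetilde\varPsi$.

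The main obstacle is the bookkeeping in the last step: one must track precisely how the Hilbert-scheme tangent weights at a torus fixed point interact with the twists by $e^{\xi_i\mp a}$ and with the normalizing factor $(2a)^{n_1+n_2-p_g}$, and verify that after summing over $p \in X^{T_X}$ the intricate combinations such as $\tfrac{1}{8}\partial_a^2 + \tfrac{1}{4}\partial_a\partial_m + \tfrac{1}{8}\partial_m^2$ emerge with exactly the coefficients stated. These peculiar weights arise from the chain rule applied to $\vec a = (-a,+a)$, $m = a$ inside $\Fin_0(\vec a, m; \Lambda)$, and matching every cross-term, as well as the mixed pieces $\chi_h(X)(12\Ain - 8\Bin)$ and $(K_X^2)(\Bin - \Ain + \tfrac{1}{8}\partial_a^2\Fin_0)$, against the toric moment identities is the delicate part of the computation.
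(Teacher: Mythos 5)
Your overall skeleton is the same as the paper's: reduce to toric surfaces by the Ellingsrud--G\"ottsche--Lehn cobordism argument, localize on $X^{[n_1]}\times X^{[n_2]}$, identify the local factors with summands of $\Zin$ for $\Nc=2$, $N_f=1$, and expand via \eqref{eq:expand} with $H^{\text{\rm inst}}=0$. But there is a genuine gap in the identification step. You write the local factor as $\Zin(\ve_1^{(p)},\ve_2^{(p)},a,a;\Lambda_p)$ with constant Coulomb parameter $\vec a=(-a,+a)$ and constant mass $m=a$, only $\Lambda_p$ varying with $p$, and you attribute the coefficients $\tfrac18\partial_a^2+\tfrac14\partial_a\partial_m+\tfrac18\partial_m^2$ to ``the chain rule applied to $\vec a=(-a,+a)$, $m=a$.'' That mechanism cannot work: a chain rule for a constant substitution produces only combinations of first derivatives and carries no information about $\xi_1$, $\xi_2$, $K_X$, so it cannot generate the terms proportional to $((\xi-K_X)^2)$, $(\xi-K_X,\tilde\xi_1)$, $(\xi-K_X,\alpha)$ or $(\tilde\xi_1,\alpha)$. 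The correct identification, after normalizing $c_1$ of the universal sheaf to zero, is
\begin{equation*}
  a_2 = s + \iota_{p}^*\Bigl(\frac{\xi_2-\xi_1}{2}\Bigr),\qquad
  m = s + \iota_{p}^*\Bigl(\frac{\xi-K_X}{2}\Bigr),
\end{equation*}
i.e.\ both the Coulomb parameter and the mass are shifted at each fixed point by restrictions of equivariant cohomology classes (the $K_X$ entering through the $K_{\C^2}^{1/2}$ twist in the definition of the matter bundle). One then Taylor-expands $\Fin_0$, $\Ain$, $\Bin$ around $(s,s)$ in these shifts and uses the localization identities $\sum_p \iota_p^*\beta\,\iota_p^*\gamma/(\ve_1^{(p)}\ve_2^{(p)})=(\beta,\gamma)$, $\sum_p 1=\chi(X)$, $\sum_p \bigl((\ve_1^{(p)})^2+(\ve_2^{(p)})^2\bigr)/\bigl(\ve_1^{(p)}\ve_2^{(p)}\bigr)=3\sigma(X)$; this is the only way the quadratic intersection forms and the combinations $\chi_h(X)(12\Ain-8\Bin)$, $(K_X^2)(\Bin-\Ain+\tfrac18\partial_a^2\Fin_0)$ can appear. (Your stated identity $\sum_p 1/(\ve_1^{(p)}\ve_2^{(p)})=\chi(X)$ is also not correct as written; the Euler characteristic arises as the number of fixed points, matching the $\ve_1\ve_2\Ain$ term against the $1/(\ve_1\ve_2)$ prefactor.) The evaluation at $(a,m)=(a,a)$ in the statement refers to the base point of this Taylor expansion, not to a literal substitution into the arguments of $\Zin$ before expansion. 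Without the fixed-point-dependent shifts your argument produces only the $x$, $(\alpha^2)z^2$ and $\log\Lambda$-derivative terms and misses the rest of the exponent.
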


Observe that our formula does not depend on the complex structure of
$X$ when we consider the canonical class $K_X$ as a choice of a
spin${}^c$ structure. Therefore, the above expression makes sense for
a smooth $4$-manifold $X$.
Further observe that $K_X$ appears in the above expression only as
either $(K_X^2)$ or the combination $\xi-K_X$, except in the sign
factor.
\begin{NB}
We can do slightly more precise.

The first sign factor $(-1)^{(\xi,\xi+K_X)/2}$ comes from the
difference between the complex orientation and the usual orientation
as mentioned in \subsecref{subsec:complex}.
The second factor $(-1)^{(K_X,K_X+\tilde\xi_1)/2}$ can be also replaced by
$(-1)^{(\xi,\xi-\tilde\xi_1)/2}$ up to an expression involving only
$\xi-K_X$, $(K_X^2)$. (See the discussion below.****) This
$(-1)^{(\xi,\xi-\tilde\xi_1)/2}$ already appears in Witten's formula.
\end{NB}%
If we ignore the sign, the Donaldson invariants depend only on
$(\xi\bmod 2)$, so we can consider $\xi - K_X$ as auxiliary cohomology
class. The only requirement is that it is equal to $(\xi\bmod 2) +
w_2(X)$ in $H^2(X,\Z/2)$.

Therefore we pose the following:
\begin{Conjecture}\label{con:main}
  Mochizuki's result (\thmref{thm:Mochizuki}) holds for a smooth
  $4$-manifold $X$ with $b_1 = 0$, $b_+\ge 3$ odd, up to sign, if we
  replace $\widetilde{\mathcal A}(\xi_1,y;a)$ by coefficients of
  $\mathcal B(\xi_1,\xi;a)$ in \thmref{thm:partition}.
\end{Conjecture}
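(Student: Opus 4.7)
The strategy is to combine the $\SO(3)$-monopole cobordism of Pidstrigach--Tyurin and Feehan--Leness with a universality principle, so as to reduce the smooth $4$-manifold conjecture to the already-proved projective case (Theorem~\ref{thm:partition}). The key point is that the right-hand side of Conjecture~\ref{con:main}, as written out in Theorem~\ref{thm:partition}, is a universal polynomial expression in the topological data $\chi_h(X)$, $(K_X^2)$, the intersection pairings $(\xi,\alpha)$, $(\tilde\xi_1,\alpha)$, etc., whose coefficients are built out of derivatives of $\Fin_0$, $\Ain$, $\Bin$. If one can show that Donaldson invariants of any smooth $4$-manifold of SW-simple type admit an expansion in Seiberg--Witten classes with \emph{the same} universal shape, then matching the two universal polynomials is automatic on the rich subclass of complex projective surfaces, and the conjecture follows for all smooth $X$.

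First I would invoke Feehan--Leness's structural formula \eqref{eq:FL}, which realises $D^{\xi,n}(\alpha^k p^l)=\sum_{\fs} f_{k,l}(\chi_h(X),(K_X^2),\fs,\xi,\alpha,\fs_0)\SW(\fs)$ with universal coefficients $f_{k,l}$. For a projective surface $\fs_0$ is induced from the complex structure and corresponds to $K_X$; for arbitrary $X$ the choice of $\fs_0$ plays the role that $K_X$ plays in Theorem~\ref{thm:partition}, and the observation that $K_X$ enters the statement of that theorem only through $(K_X^2)$, the combination $\xi-K_X$, and an overall sign means that the smooth analogue of $\mathcal B(\xi_1,\xi;a)$ is obtained by simply replacing $K_X$ by $c_1(\fs_0)$. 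Thus both sides of Conjecture~\ref{con:main} are polynomial in the same topological inputs.

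Second I would show that Feehan--Leness's $f_{k,l}$ is itself a $\C^*$-equivariant residue of an Atiyah--Bott fixed point contribution at the Seiberg--Witten stratum inside the $\SO(3)$-monopole master space, and that this residue factorises, in the Ellingsrud--G\"ottsche--Lehn style, into local pieces associated to $\R^4$-charts. Each local piece is, by its very construction (Dirac operator twisted by a spin${}^c$ line bundle, coupled to a rank-$2$ instanton), the Nekrasov partition function $\Zin$ with $\Nc=2$, $N_f=1$ and $m=a$, whose leading expansion $\Fin_0,\,H^{\text{\rm inst}}\equiv 0,\,\Ain,\,\Bin,\dots$ is exactly what appears in $\mathcal B(\xi_1,\xi;a)$. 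Combined with Theorem~\ref{thm:partition} on projective toric surfaces, where the Atiyah--Bott formula computes both sides explicitly and matches them, this identifies the universal $f_{k,l}$ with the corresponding coefficient of $\mathcal B(\xi_1,\xi;a)$ for all smooth $4$-manifolds of SW-simple type.

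The main obstacle is making the first step fully rigorous. Feehan--Leness's structural formula rests on unresolved technical hypotheses about the smooth $\SO(3)$-monopole moduli spaces---transversality of the Seiberg--Witten stratum, compactness and gluing of Uhlenbeck-type ends, existence of a well-behaved $\C^*$-equivariant obstruction theory---and without these the universal expansion is not established. One would either have to supply an independent construction of the smooth master space and its virtual class, mimicking Mochizuki's algebro-geometric treatment in the differentiable category, or bypass $\SO(3)$-monopoles altogether by a topological surgery argument that propagates Theorem~\ref{thm:partition} from projective surfaces to an arbitrary smooth $4$-manifold through connected sums, blow-ups, and rational blow-downs, showing both sides transform identically. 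Either route is highly nontrivial and not carried out in the present paper, which is exactly why the statement is left as Conjecture~\ref{con:main} rather than a theorem.
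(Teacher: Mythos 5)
The statement you are asked to prove is left as a \emph{conjecture} in the paper; the authors offer only a heuristic argument, and that heuristic is essentially the route you propose (reduce to Feehan--Leness's universal expansion \eqref{eq:FL}, identify $\fs_0$ with the canonical spin${}^c$ structure so that their auxiliary class matches $\xi-K_X$, and then compare with \thmref{thm:partition} on projective surfaces). Your closing paragraph correctly identifies the unresolved technical hypotheses on $\SO(3)$-monopole moduli spaces as one reason the statement cannot currently be upgraded to a theorem.

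However, there is a genuine flaw in your first step that goes beyond those hypotheses, and the paper flags it explicitly: the claim that ``matching the two universal polynomials is automatic on the rich subclass of complex projective surfaces'' is false. Seiberg--Witten invariants of complex projective surfaces satisfy nontrivial linear relations --- precisely the superconformal simple type sum rules \eqref{eq:scs} --- so knowing that two universal coefficient systems $f_{k,l}$ produce the same Donaldson invariants on every projective surface does \emph{not} force the coefficients to agree; they are not uniquely determined by the values of the invariants. This is why the authors' heuristic does not compare invariants at all, but instead posits a morphism from Mochizuki's master space to the $\SO(3)$-monopole moduli space and uses functoriality of pushforward (as in \eqref{eq:commute}) to compare the two fixed-point contributions \emph{directly}, stratum by stratum. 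Your second step gestures at this kind of direct identification, but as written it is subordinated to the coefficient-matching argument, which cannot work as stated. Even granting everything, the paper also notes the reduction only covers $X$ of simple type admitting a projective surface with the same $\chi$ and $\sigma$, a restriction your proposal does not address.
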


We have the conditions $(\xi,H)/2 > (K_X,H)$, $(\xi,H) > (c_1(\fs) +
K_X,H)$, which we do not know how to interpret for a smooth
$4$-manifold $X$. Therefore we just ignore this condition and
conjecture that Mochizuki's result holds without it.

This conjecture is compatible with Feehan-Leness' result \eqref{eq:FL}.
Our formula in \thmref{thm:partition} involves only the intersection pairings
among $\tilde\xi_1$, $\xi-K_X$ and $\alpha$.
Their formula involves an auxiliary cohomology class, denoted by
$\Lambda$ in \cite[Th.~3.1]{FLA}, is equal to $\Lambda = c_1(\fs_0) +
\xi$ for a chosen spin${}^c$ structure $\fs_0$. We take the canonical
spin${}^c$ structure of the complex surface $X$ as $\fs_0$, so their
$\Lambda$ should be identified with our $\xi - K_X$.
In fact, $\Lambda$ satisfies the same condition which we have assumed
for $\xi-K_X$. It is required to satisfy the same condition as
$\chi(y) > 0$ thanks to \eqref{eq:chiy} (written as `$I(\Lambda) >
\delta$' [loc.\ cit.]).
We also remark that the exponent
\(
  ((\xi-K_X)^2) + (K_X^2) + 3\chi_h(X) - 2(\xi-K_X,\tilde\xi_1)
\)
of $2a/\Lambda$ is equal to $-r(\Lambda,c_1(\mathfrak s))$ in
\cite[(1.12)]{FL2} if we take $\Lambda = \xi-K_X$, $\mathfrak s =
\tilde\xi_1$ and replace $(K_X^2)$ by $(c_1(\mathfrak s)^2)$.

Thus our conjecture follows immediately if the coefficients $f_{k,l}$
appearing in Feehan-Leness' formula \eqref{eq:FL} are the same as
ours.
This does not directly follow from Feehan-Leness' statement itself,
as Seiberg-Witten invariants satisfy nontrivial relations, namely
superconformal simple type condition, therefore the coefficients are
not uniquely determined.

The authors' heuristic proof is the following: there is a morphism
from Mochizuki's master space to the moduli space of $\SO(3)$-monopole
when $X$ is complex projective. Then by the functoriality of
pushforward homomorphisms as used in \eqref{eq:commute}, the
contributions of Seiberg-Witten invariants are the same for
Mochizuki's and Feehan-Leness' formulas. Since Feehan-Leness' formula
is universal, it is enough to calculate them for complex projective
$X$, and our calculation gives the answer.

This proof works only for $X$ of simple type and for which there is a
complex projective surface $X_0$ with $\chi(X) = \chi(X_0)$,
$\sigma(X) = \sigma(X_0)$.
To generalize it for hypothetical $X$ of non-simple type, we need to
connect Feehan-Leness' coefficients with Nekrasov partition function
more directly.

\begin{proof}[Proof of \thmref{thm:partition}]
  The proof is similar to that of the wall-crossing formula for
  $b_+=1$ in \cite{GNY}. When the argument is really the same, we just
  point to the corresponding argument in [loc.\ cit.].

  We denote $a$ in Mochizuki's $\widetilde{\mathcal A}$ by $s$ for a
  moment.

  We first write $\mathcal B$ as a product of the `perturbative term',
  i.e., an expression independent of $n_1$, $n_2$ and the `instanton
  part', which is $1$ if $n_1 = n_2 = 0$. For the term $P$, we have
\begin{equation*}
\begin{split}
& P(\cal I_1 e^{\xi_1-s} \oplus \cal I_2 e^{\xi_2+s}) \\
=\; & \exp(-(\xi_2 -\xi_1,\alpha)sz-s^2 x)
\exp([c_2({\cal I}_1)+c_2({\cal I}_2)]/(\alpha z+px)).
\end{split}
\end{equation*}
Thus the perturbative term is $\exp(-(\xi_2 -\xi_1,\alpha)sz-s^2 x)$.
\begin{NB}
More detail:
\begin{equation*}
\begin{split}
& P(\cal I_1 e^{\xi_1-s} \oplus \cal I_2 e^{\xi_2+s}) \\
=& \exp(-\ch_2(\cal I_1 e^{\xi_1-s-\frac{\xi}{2}} \oplus 
\cal I_2 e^{\xi_2+s-\frac{\xi}{2}})
/(\alpha z+px))\\
=& \exp(-\ch_2(\cal I_1 e^{\frac{\xi_1-\xi_2}{2}-s} 
\oplus \cal I_2 e^{\frac{\xi_2 -\xi_1}{2}+s})
/(\alpha z+px))\\
=& \exp(-[\frac{1}{2}(\frac{\xi_1-\xi_2}{2}-s)^2+c_2({\cal I}_1)+
\frac{1}{2}(\frac{\xi_2-\xi_1}{2}+s)^2+c_2({\cal I}_2)]/(\alpha z+px)))\\
=& \exp(-(\xi_2 -\xi_1,\alpha)sz-s^2 x)
\exp([c_2({\cal I}_1)+c_2({\cal I}_2)]/(\alpha z+px)).
\end{split}
\end{equation*}
\end{NB}%
For $Q$, the perturbative term is
\begin{equation*}
\begin{split}
&\Eu(H^*({\cal O}_X(\xi_1-\xi_2))e^{-2s})
\Eu(H^*({\cal O}_X(\xi_2-\xi_1))e^{2s})\\
=\; &
(-2s)^{\chi({\cal O}_X(\xi_1-\xi_2))}
(2s)^{\chi({\cal O}_X(\xi_2-\xi_1))}\\
=\; & (-1)^{\frac{(\xi_1-\xi_2,\xi_1-\xi_2-K_X)}{2}+\chi_h(X)}
(2s)^{((\xi_1-\xi_2)^2)+2\chi_h(X)}\\
=\; &(-1)^{\frac{(\xi,\xi-K_X)}{2}+(K_X,\xi_2)+\chi_h(X)}
(2s)^{((\xi_1-\xi_2)^2)+2\chi_h(X)}.
\end{split}
\end{equation*}
We also have $2^{1-\chi(y)}(2s)^{p_g-n_1-n_2}$ whose perturbative part is
\begin{equation*}
  2^{1 - \frac{(\xi,\xi-K_X)}2 - 2\chi_h(X) + (\xi_1,\xi_2)}
  (2s)^{\chi_h(X) - 1}.
\end{equation*}
For the power of $\Lambda$, the perturbative part is
\begin{equation*}
  \Lambda^{4(\xi_1,\xi_2) - (\xi^2) - 3\chi_h(X)}
  = \Lambda^{-((\xi_1-\xi_2)^2)-3\chi_h(X)}.
\end{equation*}
Combining all these terms, we find that the perturbative part of
$\mathcal B$ is
\begin{multline}
  \label{eq:Bpert}
  (-1)^{\frac{(\xi,\xi-K_X)}{2}+(K_X,\xi_2)+\chi_h(X)}
\\
  \times\frac{1}{s}
  \left(\frac{2s}{\Lambda}\right)^{((\xi_1-\xi_2)^2)+3\chi_h(X)}
  e^{-(\xi_2-\xi_1,\alpha)sz-s^2 x}
  2^{-2\chi_h(X)-\frac{(\xi,\xi-K_X)}{2}+(\xi_1,\xi_2)}.
\end{multline}

By the argument in [loc.\ cit., \S5], it is enough to compute the
instanton part for a toric surface $X$.
\begin{NB}
The image of 
$$
(X,\xi,\eta,\beta) \mapsto
((\xi^2),(\xi,\eta),(\xi,K_X),(\xi,\beta),(\eta^2), (\eta,K_X),
(\eta,\beta),(K_X^2),c_2(X),
(K_X,\beta),(\beta^2),\beta)
$$
spans 12 dimensional $\Q$-vector space.
Indeed 
$$
(X,\xi,\eta,\beta)=({\Bbb P}^2,0,0,0), 
({\Bbb P}^1 \times {\Bbb P}^1,0,0,0),({\Bbb P}^2,0,0,H),
({\Bbb P}^2,0,0,2H),({\Bbb P}^2,0,0,p)$$
 spans the subspace
$x_1=x_2=\cdots=x_7=0$.
By adding
$$
(X,\xi,\eta,\beta)=({\Bbb P}^2,H,0,0),({\Bbb P}^2,2H,0,0),
({\Bbb P}^2,H,0,H),$$
 we have the subspace $x_1=\cdots=x_4=0$.  
For 
$$
(X,\xi,\eta,\beta)=({\Bbb P}^2,H,0,0),({\Bbb P}^2,2H,0,0),({\Bbb P}^2,H,0,H),
({\Bbb P}^2,H,H,0), 
$$
we can spans the first 4 components $x_1,...,x_4$.
Therefore we get the claim.
\end{NB}%

We write $\chi\defeq\chi(X)$ for brevity.
Let $p_1,p_2,...,p_{\chi}$ be the torus fixed points,
$x_i,y_i$ the torus equivariant coordinates at $p_i$ and 
$w(x_i),w(y_i)$ the weights of the torus action.
As in [loc.\ cit., \S3.2] we apply the Atiyah-Bott-Lefschetz fixed point
formula to $\widetilde{\mathcal A}(\xi_1,y;s)$.
At torus fixed points, the ideal sheaves $\cal I_1$, $\cal I_2$ are
the intersection of ideal sheaves supported at points
$p_i$. Accordingly the cohomology groups in $Q$ and the matter factor
\(
\Eu(H^*(({\cal O}/{\cal I}_1) e^{\xi_1}))
  \Eu(H^*(({\cal O}/{\cal I}_2) e^{\xi_2+2s}))
\)
decompose as products of local contributions at $p_i$.
As in [loc.\ cit., \S3.2] we will identify these local contributions
with factors in the partition function $\Zin$.

Let us first study how variables appearing in $\Zin$ will be
identified with expressions in the local contribution of
$\widetilde{\mathcal A}$ at $p_i$.
The variables $\ve_1$, $\ve_2$ in $\Zin$ are identified with $w(x_i)$,
$w(y_i)$.
In order to identify $a_1$, $a_2$, consider the factor $Q$.
In the definition of the partition function the first Chern class of
the universal sheaf is normalized to be $0$ as $a_1+a_2 = 0$. In view
of [loc. cit., Lemma~3.4], this normalization must be performed for
$Q$ as
\begin{equation*}
  \Ext^*_{q_2}(\cal I_1 e^{\xi_1-s}, \cal I_2 e^{\xi_2+s})
    = \Ext^*_{q_2} (\cal I_1 e^{\xi_1 - s - \xi/2},
      \cal I_2 e^{\xi_2+s-\xi/2}).
\end{equation*}
Thus we get the same expression appearing in $P$, and we will identify
variables as
\begin{equation*}
  \begin{split}
  a_1 &= - s + \iota_{p_i}^*(\xi_1  - \xi/2)
  = - s - \iota_{p_i}^*(\xi_2 - \xi_1)/2, \\
  a_2 &= s + \iota_{p_i}^*(\xi_2 - \xi/2) = s + \iota_{p_i}^*(\xi_2 - \xi_1)/2
  \end{split}
\end{equation*}
in $\Zin$ and $\widetilde{\mathcal A}$. Here $\iota_{p_i}^*$ is the
pull-back homomorphism associated with the inclusion $\iota_{p_i}$ of
the fixed point $p_i$ into $X$.

Accordingly we normalize the matter factor as
\begin{multline*}
  \Eu(H^*(({\cal O}/{\cal I}_1) e^{\xi_1}))
  \Eu(H^*(({\cal O}/{\cal I}_2) e^{\xi_2+2s}))
\\
  =
  \Eu(H^*(({\cal O}/{\cal I}_1) e^{\xi_1 - s - \xi/2 + s + \xi/2}))
  \Eu(H^*(({\cal O}/{\cal I}_2) e^{\xi_2 + s - \xi/2 + s + \xi/2})).
\end{multline*}
Recalling that we put $K_{\C^2}^{1/2}$ in the partition function, we
identify the variable $m$ for the matter with
$s+\iota_{p_i}^*(\xi-K_X)/2$, as $\xi_\alpha - s - \xi/2$ is
$a_\alpha$ for $\alpha=1,2$.

Next we consider the variable $\Lambda$.
After removing the perturbative part as above, we consider
$\Lambda^{4(n_1+n_2)}$ in $\mathcal B$. On the other hand, we use
$\Lambda^{3(n_1+n_2)}$ in the definition of the partition function.
We combine this with $s^{n_1+n_2}$ in $\widetilde{\varPsi}$, which
we then absorb into the variable $\Lambda$ in the partition function
\eqref{eq:partition}. Therefore $\Lambda$ in \eqref{eq:partition} will
be replaced by $\Lambda^{4/3}s^{-1/3}$.

Now we use the argument in [loc.\ cit., \S3.2] to write the instanton
part of $\mathcal B$ in terms of the partition function:
\begin{multline*}
  \lim_{\ve_1,\ve_2\to 0} \Res_{s=0} \prod_{i=1}^{\chi}
\Zin(w(x_i),w(y_i),\iota_{p_i}^*(\frac{\xi_2-\xi_1}{2})+s,
\iota_{p_i}^*(\frac{\xi-K_X}{2})+s; \frac{\Lambda^{4/3}}{s^{{1}/{3}}}
e^{\iota_{p_i}^*(\frac{\alpha z+px}{3})})
\end{multline*}

We need to explain the last expression $e^{\iota_{p_i}^*(\frac{\alpha
    z+px}{3})}$. This comes from
$\exp([c_2({\cal I}_1)+c_2({\cal I}_2)]/(\alpha z+px))$, which
is the instanton part of $P$.
We use the same argument as in [loc.\ cit., Cor.~3.18], which was
based on \cite[\S4.5]{NY2}. Let us briefly recall the point of the
argument:
We can put more variables $\vec{\tau} = (\tau_\rho)_{\rho\ge 1}$ into
the partition function $\Zin$ as in [loc.\ cit., (1.4)],
\cite[\S4.2]{NY2}. But we only need $\tau_1$ since we only use $\ch_2$
and not higher Chern classes in the Donaldson invariants. Then
$\tau_1$ can be absorbed into the variable $\Lambda$ as $\ch_2$ is
determined by $n$ of $M(r,n)$.
We identify $\tau_1 = -\iota_{p_i}^*(\alpha z + px)$ as in [loc.\ cit.].
In fact, the absorption of $\tau_1$ into $\Lambda$ is simpler than 
in \cite[\S4.2]{NY2}, as we do not put the perturbative term in the
partition function. We just need to note that it is a multiplication
of $e^{\iota_{p_i}^*(\frac{\alpha z+px}{3})}$ instead of
$e^{\iota_{p_i}^*(\frac{\alpha z+px}{4})}$, because we use
$\Lambda^{\bgamma n} = \Lambda^{3n}$ instead of $\Lambda^{4n}$ in the
definition of the partition function.

We now use the expansion \eqref{eq:expand} together with $H^{\text{\rm
    inst}} = 0$. As in [loc.~cit., proof of Th.~4.2], we have
{\allowdisplaybreaks
\begin{multline*}
\prod_{i=1}^{\chi}
\Zin(w(x_i),w(y_i),i_{p_i}^*(\frac{\xi_2-\xi_1}{2})+s,
i_{p_i}^*(\frac{\xi-K_X}{2})+s;
\Lambda(\frac{\Lambda}{s})^{\frac{1}{3}}
e^{i_{p_i}^*(\frac{\alpha z+px}{3})})\\
=
\exp \left[
\sum_i \frac{1}{w(x_i)w(y_i)}
\left(\Fin_0+ \frac{\partial \Fin_0}{\partial a}
i_{p_i}^*(\frac{\xi_2-\xi_1}{2})+
\frac{\partial \Fin_0}{\partial m}
i_{p_i}^*(\frac{\xi-K_X}{2})+
\frac{\partial \Fin_0}{\partial \log \Lambda}
i_{p_i}^*(\frac{\alpha z+px}{3}) \right. \right. \\
+\frac{1}{2}\frac{\partial^2 \Fin_0}{\partial a^2}
i_{p_i}^*(\frac{\xi_2-\xi_1}{2})^2
+\frac{\partial^2 \Fin_0}{\partial a \partial m}
i_{p_i}^*(\frac{\xi_2-\xi_1}{2})i_{p_i}^*(\frac{\xi-K_X}{2})
+\frac{1}{2}\frac{\partial^2 \Fin_0}{\partial m^2}
i_{p_i}^*(\frac{\xi-K_X}{2})^2\\
+
\frac{\partial^2 \Fin_0}{\partial a \partial \log \Lambda}
i_{p_i}^*(\frac{\xi_2-\xi_1}{2})i_{p_i}^*(\frac{\alpha z+px}{3})
+
\frac{\partial^2 \Fin_0}{\partial m \partial \log \Lambda}
i_{p_i}^*(\frac{\xi-K_X}{2})i_{p_i}^*(\frac{\alpha z+px}{3})+
\frac{1}{2}
\frac{\partial^2 \Fin_0}{\partial \log \Lambda^2}
i_{p_i}^*(\frac{\alpha z+px}{3})^2\\
\left. \left. +w(x_i)w(y_i)\Ain+\frac{w(x_i)^2+w(y_i)^2}{3}\Bin
\right)\right]\\
=\exp \left[
\frac{1}{3}\frac{\partial \Fin_0}{\partial \log \Lambda}x
+\frac{1}{8}\frac{\partial^2 \Fin_0}{\partial a^2}
((\xi_2-\xi_1)^2)
+\frac{1}{4}\frac{\partial^2 \Fin_0}{\partial a \partial m}
(\xi_2-\xi_1,\xi-K_X)
+\frac{1}{8}\frac{\partial^2 \Fin_0}{\partial m^2}
((\xi-K_X)^2) \right.\\
\left. +\frac{1}{6}
\frac{\partial^2 \Fin_0}{\partial a \partial \log \Lambda}
(\xi_2-\xi_1,\alpha) z
+\frac{1}{6}
\frac{\partial^2 \Fin_0}{\partial m \partial \log \Lambda}
(\xi-K_X,\alpha) z+
\frac{1}{18}
\frac{\partial^2 \Fin_0}{\partial \log \Lambda^2}
(\alpha^2) z^2
+ \chi(X)\Ain+\sigma(X)\Bin\right]\\
+O(\ve_1,\ve_2),
\end{multline*}
where} we evaluate the derivatives of $\Fin_0$, $\Ain$, $\Bin$ at
$a_2=a=s$, $m=s$, $\Lambda=\frac{\Lambda^{{4}/{3}}}{s^{{1}/{3}}}$.
We now safely change $s$ back to $a$.
\begin{NB}
\begin{multline*}
(-1)^{\frac{(\xi,\xi-K_X)}{2}+(K_X,\xi_2)+\chi({\cal O}_X)}
\frac{1}{2s}
\left(\frac{2s}{\Lambda}\right)^{((\xi_1-\xi_2)^2)+3\chi({\cal O}_X)}
e^{-(\xi_2-\xi_1,\alpha)sz-s^2 x}
2^{-2\chi({\cal O}_X)+1-\frac{(\xi,\xi-K_X)}{2}+(\xi_1,\xi_2)}\\
\times \exp \left[
\frac{1}{3}\frac{\partial \Fin_0}{\partial \log \Lambda}x
+\frac{1}{8}\frac{\partial^2 \Fin_0}{\partial a_2^2}
((\xi_2-\xi_1)^2)
+\frac{1}{4}\frac{\partial^2 \Fin_0}{\partial a_2 \partial m}
(\xi_2-\xi_1,\xi-K_X)
+\frac{1}{8}\frac{\partial^2 \Fin_0}{\partial m^2}
((\xi-K_X)^2) \right.\\
\left. +\frac{1}{6}
\frac{\partial^2 \Fin_0}{\partial a_2 \partial \log \Lambda}
(\xi_2-\xi_1,\alpha) z
+\frac{1}{6}
\frac{\partial^2 \Fin_0}{\partial m \partial \log \Lambda}
(\xi-K_X,\alpha) z+
\frac{1}{18}
\frac{\partial^2 \Fin_0}{\partial \log \Lambda^2}
(\alpha^2) z^2
+ \chi(X)\Ain+\sigma(X)\Bin\right]\\
=
(-1)^{\frac{((\xi_2-\xi_1)^2)+(K_X,\xi_2-\xi_1))}{2}+\chi({\cal O}_X)}
\frac{1}{s}
\left(\frac{2s}{\Lambda}\right)^{((\xi_1-\xi_2)^2)+3\chi({\cal O}_X)}
e^{-(\xi_2-\xi_1,\alpha)sz-s^2 x}
2^{-2\chi({\cal O}_X)-
\frac{((\xi_2-\xi_1)^2)+(\xi_2-\xi_1,\widetilde{\xi}_1)}{2}}\\
\times \exp \left[
\frac{1}{3}\frac{\partial \Fin_0}{\partial \log \Lambda}x
+
\left(\frac{1}{8}\frac{\partial^2 \Fin_0}{\partial a_2^2}
+\frac{1}{4}\frac{\partial^2 \Fin_0}{\partial a_2 \partial m}
+\frac{1}{8}\frac{\partial^2 \Fin_0}{\partial m^2} \right)
((\xi_2-\xi_1)^2) \right.\\
+\frac{1}{4}\left(\frac{\partial^2 \Fin_0}{\partial a_2 \partial m}
+\frac{\partial^2 \Fin_0}{\partial m^2} \right)
(\xi_2-\xi_1,\widetilde{\xi}_1)\\
 +\frac{1}{6}\left(
\frac{\partial^2 \Fin_0}{\partial a_2 \partial \log \Lambda}+
\frac{\partial^2 \Fin_0}{\partial m \partial \log \Lambda} \right)
(\xi_2-\xi_1,\alpha) z
+\frac{1}{6}
\frac{\partial^2 \Fin_0}{\partial m \partial \log \Lambda}
(\widetilde{\xi}_1,\alpha) z\\
\left.+\frac{1}{18}
\frac{\partial^2 \Fin_0}{\partial \log \Lambda^2}
(\alpha^2) z^2
+ (\chi(X)+\sigma(X))(\Ain+\frac{1}{4}\frac{\partial^2 \Fin_0}{\partial m^2})
+\sigma(X)(\Bin-\Ain+\frac{1}{8}\frac{\partial^2 \Fin_0}{\partial m^2})\right]
\\
=
(-1)^{\frac{((\xi_2-\xi_1)^2)+(K_X,\xi_2-\xi_1))}{2}+\chi({\cal O}_X)}
\frac{1}{s}
\left(\frac{2s}{\Lambda}\right)
^{((\xi-K_X)^2)+(K_X^2)+3\chi({\cal O}_X)-2(\xi-K_X,\widetilde{\xi}_1)}\\
e^{-(\xi-K_X-\widetilde{\xi}_1,\alpha)sz-s^2 x}
2^{-2\chi({\cal O}_X)-
\frac{((\xi_2-\xi_1)^2)+(\xi_2-\xi_1,\widetilde{\xi}_1)}{2}}\\
\times \exp \left[
\frac{1}{3}\frac{\partial \Fin_0}{\partial \log \Lambda}x
+
\left(\frac{1}{8}\frac{\partial^2 \Fin_0}{\partial a_2^2}
+\frac{1}{4}\frac{\partial^2 \Fin_0}{\partial a_2 \partial m}
+\frac{1}{8}\frac{\partial^2 \Fin_0}{\partial m^2} \right)
((\xi-K_X)^2) \right.\\
-\frac{1}{4}\left(\frac{\partial^2 \Fin_0}{\partial a_2 \partial m}
+\frac{\partial^2 \Fin_0}{\partial a^2} \right)
(\xi-K_X,\widetilde{\xi}_1)\\
 +\frac{1}{6}\left(
\frac{\partial^2 \Fin_0}{\partial a_2 \partial \log \Lambda}+
\frac{\partial^2 \Fin_0}{\partial m \partial \log \Lambda} \right)
(\xi-K_X,\alpha) z
-\frac{1}{6}
\frac{\partial^2 \Fin_0}{\partial a_2 \partial \log \Lambda}
(\widetilde{\xi}_1,\alpha) z\\
\left.+\frac{1}{18}
\frac{\partial^2 \Fin_0}{\partial \log \Lambda^2}
(\alpha^2) z^2
+ (\chi(X)+\sigma(X))(3\Ain-2\Bin)
+(2\chi(X)+3\sigma(X))
(\Bin-\Ain+\frac{1}{8}\frac{\partial^2 \Fin_0}{\partial a^2})\right],
\end{multline*}
\end{NB}

We use 
\(
    \chi(X) = 12\chi_h(X) - (K_X)^2,
\)
\(
    \sigma(X) = (K_X^2) - 8\chi_h(X),
\)
\(
  \xi = \xi_1+\xi_2,
\)
\(
   \widetilde{\xi}_1 = 2\xi_1 - K_X
\)
and
\(
    (\widetilde{\xi}_1^2)=(K_X^2)
\)
to get the assertion,
where the last equality is nothing but the SW-simple type condition.
\begin{NB}
More detail:
\begin{equation*}
\begin{split}
(\xi,\xi-K_X)/{2}+(K_X,\xi_2)
= (\xi,\xi-K_X)/2+(K_X,\xi-\xi_1)
= (\xi,\xi+K_X)/2-(K_X,\widetilde\xi_1+K_X)/2.
\end{split}
\end{equation*}
\begin{equation*}
\begin{split}
& -\frac{(\xi,\xi-K_X)}{2}+(\xi_1,\xi_2)
=
  -\frac{(\xi,\xi-K_X)}{2}+(\frac{\widetilde\xi_1+K_X}2,
  \xi-\frac{\widetilde\xi_1+K_X}2)\\
\\
=\; &
  -\frac{(\xi,\xi-K_X)}{2}+
  (\frac{\widetilde\xi_1+K_X}2, \xi-K_X)
  - (\frac{\widetilde\xi_1+K_X}2, \frac{\widetilde\xi_1-K_X}2)
=
- \frac{(\xi-K_X-\tilde\xi_1,\xi-K_X)}2.
\end{split}
\end{equation*}
\end{NB}%
\end{proof}

\section{Blow-up formula for the partition function}

We start to analyze the partition function in this section. Our
technique is the same as one in \cite{NY1,NY2,NY3}: we study the
blow-up formula of the partition function.

\subsection{Partition function on the blow-up}

Let $p\colon \bp\to \proj^2$ be the blow-up of $\proj^2$ at the origin
$[1:0:0]$. Let $C = p^{-1}([1:0:0])$ be the exceptional divisor.
Let $\bM(r,k,n)$ be the moduli space of framed sheaves $(E,\Phi)$ on
$\bp$ with rank $r$, $c_1(E) = kC$, $(c_2(E)-(r-1)c_1(E)^2/(2r),[\bp])
= n$, where the framing is defined on $p^{-1}(\linf)$. (See
\cite[\S3]{NY1} or \cite[\S3.2]{NY2}.) This is nonsingular
quasi-projective of dimension $2rn$. We normalize as $0\le k <
r$. This is always possible by twisting by a power of $\shfO(C)$.
There is a projective morphism $\widehat\pi\colon \bM(r,k,n)\to
M_0(r,n-k(r-k)/2r)$.

We pull-back the $\C^*\times\C^*$-action on $\proj^2$ to $\bp$. Then
we have an action of $\hT$ on $\bM(r,k,n)$ as in the case of
$M(r,n)$. The action is lifted to the universal sheaf $\cE$ on
$\bp\times\bM(r,k,n)$. The morphism $\widehat\pi$ is
$\hT$-equivariant.

We define $\mu(C)$ as appeared in the definition of Donaldson's
invariants:
\begin{equation*}
  \mu(C) = \left(c_2(\cE)-\frac{r-1}{2r}c_1(\cE)^2\right)/[C]
  \in H^2_{\hT}(\bM(r,k,n)).
\end{equation*}

Over $\bM(r,k,n)$ we have two natural vector bundles, which correspond
to $\Vcal$:
\begin{equation*}
  \Vcal_0 \defeq R^1q_{2*}(\cE\otimes q_1^*\shfO(-\linf)),\quad
  \Vcal_1 \defeq R^1q_{2*}(\cE\otimes q_1^*\shfO(C-\linf)).
\end{equation*}
These are vector bundles of rank $n + k^2/(2r) - k/2$ and $n +
k^2/(2r) + k/2$ respectively thanks to the vanishing of other higher
direct image sheaves, and play a fundamental role in the ADHM type
description of $\bM(r,k,n)$ (see e.g., \cite{perv1}).

Therefore we have two possible choices of {\it matters\/} on
blow-up. Here we take $\Vcal_0$ since the $\Vcal_1$ version can be
reduced to the $\Vcal_0$ one after twisting by the line bundle
$\shfO(C)$. We define the partition function (or better to call the
correlation function since we put the operator $\mu(C)$) as in
\eqref{eq:partition} by
\begin{multline*}
   \bZin_{c_1=kC}(\ve_1,\ve_2,\vec{a},\vec{m};t;\Lambda)
\\
   \defeq 
   \Lambda^{N_f k(\Nc-k)/(2\Nc)}
   \sum_{n=0}^\infty \Lambda^{\bgamma n} \iota_{0*}^{-1}
   \widehat\pi_*\left(e^{t\mu(C)}\cap
     \Eu\left(\Vcal_0\otimes p^*(K_{\C^2}^{1/2})\otimes M\right)
   \cap [\bM(\Nc,k,n)]\right).
\end{multline*}
Here $p^*(K_{\C^2}^{1/2})$ looks a little bit artificial, but is
necessary as in the case of $\C^2$. The square root
$K^{1/2}_{\widehat\C^2}$ does not make sense since $\widehat\C^2$ is
not spin.

As in the case of the original partition function
$\Zin(\ve_1,\ve_2,\vec{a},\vec{m};\Lambda)$, this one also has a
combinatorial expression like \eqref{eq:sum}. We do not write it down
here, we only explain the parameter set for the fixed points. Similar
to the case of $M(r,n)^{\hT}$, it is the set of triples
$(\vec{k},\vec{Y}^1,\vec{Y}^2)$ of an $r$-tuple of integers $\vec{k} =
(k_1,\dots,k_{\Nc})$ and the pair of $r$-tuples of Young diagrams
$\vec{Y}^1 = (Y^1_1,\dots,Y^1_r)$, $\vec{Y}^2 =
(Y^2_1,\dots,Y^2_r)$.
The corresponding framed sheaf is $I_1(k_1C)\oplus\cdots \oplus
I_{r}(k_rC)$, where $I_\alpha$ is an ideal sheaf fixed by the
$\C^*\times\C^*$-action. The blow-up $\widehat{\C^2}$ has two fixed
points $p_1$, $p_2$, and $I_\alpha$ is given by two monomial ideals
with respect to toric coordinates at $p_1$ and $p_2$. In this way,
$I_\alpha$ is parametrized by a pair of Young diagrams
$(Y^1_\alpha,Y^2_\alpha)$.

From this combinatorial description of the fixed point set, we can
write down the correlation function $\bZin_{c_1=kC}$ as sum over the
lattice for $\{\vec{k}\}$ of products of two $\Zin$'s for $p_1$,
$p_2$, and contribution from line bundles $\shfO(k_\alpha C)$. We
postpone to write down the explicit formula until we introduce the
perturbative term in the next subsection.

\subsection{Perturbative term}

The partition function defined above does not behave well in many
aspects. It is more natural to add what is called the {\it
  perturbative term}, which is an explicit function. We recall its
definition in this subsection. We return back to arbitrary $\Nc$,
$N_f$.

Let $\gamma_{\ve_1,\ve_2}(x;\Lambda)$ be the function used to define
the perturbative part of the partition function in \cite[\S E]{NY2}:
\begin{equation}\label{eq:pert_expand}
\begin{split}
   &\gamma_{\ve_1,\ve_2}(x;\Lambda)
   =
   \begin{aligned}[t]
     & \frac1{\ve_1\ve_2}\left\{
       - \frac12 x^2 \log\left(\frac{x}{\Lambda}\right) + \frac34 x^2\right\}
     + \frac{\ve_1+\ve_2}{2\ve_1\ve_2} \left\{
       - x \log\left(\frac{x}{\Lambda}\right) + x\right\}
     \\
     & \qquad
     - \frac{\ve_1^2 + \ve_2^2 + 3\ve_1\ve_2}{12\ve_1\ve_2}
     \log\left(\frac{x}{\Lambda}\right)
  + \sum_{n=3}^\infty (n-3)!{c_n (-x)^{2-n}},
   \end{aligned}
\end{split}
\end{equation}
where $c_n$ is defined by
\begin{equation*}
   \frac1{(1 - e^{-\ve_1 t})(1 - e^{-\ve_2 t})}
   = \sum_{n=0}^\infty {c_n} t^{n-2}.
\end{equation*}

\begin{NB}
  The following is copied from below. The original should be erased
  later.
\end{NB}

If we consider the equivariant cohomology group $H^*_{T^2}(\C^2)$ of
$\C^2$ with respect to the two dimensional torus action, we have
\begin{equation*}
  c_n = \int_{\C^2} \Todd_n(\C^2),
\end{equation*}
where $\Todd_n$ is the degree $n$ part of the Todd genus, and
$\int_{\C^2}$ is defined by the localization formula applied to
$\C^2$: $\iota_0^*(\bullet)/{\Eu(T_0\C^2)}$. Here $0$ is the unique
fixed point and $\iota_0$ is the inclusion $\{0\}\to \C^2$.

If $\gamma_0(x;\Lambda) = - \frac12 x^2\log(x/\Lambda) + \frac34 x^2$
denotes the leading part of $\gamma_{\ve_1,\ve_2}(x;\Lambda)$ (`genus
$0$ part'), we have
\begin{equation*}
  \gamma_{\ve_1,\ve_2}(x;\Lambda) = 
  \sum_{n=0}^\infty \int_{\C^2}\Todd_n(\C^2) \gamma_0^{(n)}(x).
\end{equation*}
\begin{NB}
\begin{equation*}
\begin{split}
\gamma_0'(x) &= -x \log(\frac{x}{\Lambda})+x\\
\gamma_0''(x) &= -\log(\frac{x}{\Lambda})\\
\gamma^{(n)}(x) &= (-1)^n (n-3)! x^{2-n}, n \geq 3.
\end{split}
\end{equation*}
\end{NB}

We introduce the function for the matter contribution as
\begin{equation*}
  \begin{split}
    \delta_{\ve_1,\ve_2}(x;\Lambda) & \defeq
  {\gamma}_{\ve_1,\ve_2}(x-\frac{\ve_1+\ve_2}{2};\Lambda)
\\
  & =\frac1{\ve_1\ve_2}\left\{
       - \frac12 x^2 \log\left(\frac{x}{\Lambda}\right) + \frac34 x^2\right\}
     + \frac{\ve_1^2+\ve_2^2}{24}
     \log\left(\frac{x}{\Lambda}\right) + \cdots.
  \end{split}
\end{equation*}
The shift $-(\ve_1+\ve_2)/2$ is identified with $K_{\C^2}/2$, and is
compatible with our shift for the instanton partition function.

We define the full partition function as
\begin{equation*}
  Z(\ve_1,\ve_2,\vec{a},\vec{m};\Lambda)
  \defeq
  \exp\left[
    - \sum_{\vec{\alpha}\in\Delta}\gamma_{\ve_1,\ve_2}(
    \langle\vec{a},\vec{\alpha}\rangle;\Lambda)
    + \sum_{f, \alpha} \delta_{\ve_1,\ve_2}(a_\alpha+m_f;\Lambda)
  \right]
  \Zin(\ve_1,\ve_2,\vec{a},\vec{m};\Lambda).
\end{equation*}
\begin{NB}
Therefore
\begin{equation*}
  \begin{split}
    & F_0^{\mathrm{pert}} = 
    \begin{aligned}[t]
    & \sum_{\vec{\alpha}\in\Delta} \left\{
      \frac12 \langle\vec{a},\vec{\alpha}\rangle^2
      \log\left(\frac{\langle\vec{a},\vec{\alpha}\rangle}{\Lambda}\right)
      - \frac34 \langle\vec{a},\vec{\alpha}\rangle^2 \right\}
    \\
    & \quad +
    \sum_{f,\alpha}\left\{ - \frac12 (a_\alpha+m_f)^2
      \log\left(\frac{a_\alpha+m_f}{\Lambda}\right) + \frac34
      (a_\alpha+m_f)^2\right\},
    \end{aligned}
    \\
    & H^{\mathrm{pert}} = \sum_{\vec{\alpha}\in\Delta} \left\{ 
      - \frac12 \langle\vec{a},\vec{\alpha}\rangle
      \log\left(\frac{\langle\vec{a},\vec{\alpha}\rangle}{\Lambda}\right)
      + \frac12 \langle\vec{a},\vec{\alpha}\rangle\right\}
    = -\pi\sqrt{-1}\langle\vec{a},\rho\rangle,
\\
    & A^{\mathrm{pert}} = \frac12 \sum_{\vec{\alpha}\in\Delta^+}
    \log\left(\frac{\sqrt{-1}\langle\vec{a},\vec{\alpha}\rangle}
      {\Lambda}\right),
\\
    & B^{\mathrm{pert}} =
    \frac12 \sum_{\vec{\alpha}\in\Delta^+}
    \log\left(\frac{\sqrt{-1}\langle\vec{a},\vec{\alpha}\rangle}
      {\Lambda}\right)
    + \frac18 \sum_{f,\alpha}
    \log\left(\frac{a_\alpha+m_f}{\Lambda}\right).
  \end{split}
\end{equation*}
\end{NB}%

Let us expand the perturbative part as
\begin{multline*}
    - \sum_{\vec{\alpha}\in\Delta}\gamma_{\ve_1,\ve_2}(
    \langle\vec{a},\vec{\alpha}\rangle;\Lambda)
    + \sum_{f, \alpha} \delta_{\ve_1,\ve_2}(a_\alpha+m_f;\Lambda)
\\
    = \frac1{\ve_1\ve_2}
    \left(
      F_0^{\mathrm{pert}}
      + (\ve_1+\ve_2) H^{\mathrm{pert}}
      + \ve_1\ve_2 A^{\mathrm{pert}} + \frac{\ve_1^2+\ve_2^2}3 B^{\mathrm{pert}}
      + \cdots
      \right)
\end{multline*}
as in the instanton part.

For a future reference, we give explicit formulas for some terms when
$\Nc=2$, $N_f=1$:
{\allowdisplaybreaks
\begin{equation}\label{eq:pert}
\begin{split}
    & H^{\mathrm{pert}} = \pi\sqrt{-1}a,
\\
    & \pd{F_0^{\mathrm{pert}}}{\log\Lambda} 
    = -3 a^2 + m^2,
\\
   & \frac{\partial^2 F_0^{\mathrm{pert}}}{\partial (\log\Lambda)^2} = 0,
\\
   &
   -\frac1{\bgamma}\frac{\partial^2 F_0^{\mathrm{pert}}}
   {\partial\log\Lambda\partial a} = 2a,
\\
   &  \frac{\partial^2 F_0^{\mathrm{pert}}}{\partial a^2}
   = 8 \log\frac{-2\sqrt{-1}a}{\Lambda}
  - \log\frac{(a+m)(-a + m)}{\Lambda^2},
\\
   &  \frac{\partial^2 F_0^{\mathrm{pert}}}{\partial a\partial m}
   = \log\left(\frac{-a+m}{\Lambda}\right)
   - \log\left(\frac{a+m}{\Lambda}\right),
\\
   & \frac{\partial^2 F_0^{\mathrm{pert}}}{\partial m^2}
   = -\log\left(\frac{-a+m}{\Lambda}\right)
   - \log\left(\frac{a+m}{\Lambda}\right),
\\
   & \frac{\partial^2 F_0^{\mathrm{pert}}}{\partial m\partial\log\Lambda}
   = 2m,
\\
   & A^{\mathrm{pert}}
   = \frac12 \log\left( \frac{-2\sqrt{-1}a}\Lambda\right),
\\
   & B^{\mathrm{pert}}
   = \frac12 \log\left( \frac{-2\sqrt{-1}a}\Lambda\right)
   + \frac18 \log\left(\frac{(m - a)(m+a)}{\Lambda^2}
     \right).
  \end{split}
\end{equation}
}
\begin{NB}
  A little more detail:
  \begin{equation*}
    \begin{split}
      &\pd{F_0^{\mathrm{pert}}}{\log\Lambda} 
    = - (a_2 - a_1)^2 + \frac12 (a_1 + m)^2 + \frac12 (a_2 + m)^2,
\\    
   & \pd{F_0^{\mathrm{pert}}}{m}
   \begin{aligned}[t]
   &= 
   -(a_1+m)\log\left(\frac{a_1+m}{\Lambda}\right)
   + (a_1+m)
   -(a_2+m)\log\left(\frac{a_2+m}{\Lambda}\right)
   + (a_2+m)
\\
   &=
   -(-a+m)\log\left(\frac{-a+m}{\Lambda}\right)
   -(a+m)\log\left(\frac{a+m}{\Lambda}\right)
   + 2m,
   \end{aligned}
    \end{split}
  \end{equation*}
It is also useful to note
\begin{equation*}
   \left(\pd{u}{a}\right)^{\mathrm{pert}}  =      
   -\frac1{\bgamma}\frac{\partial^2 F_0^{\mathrm{pert}}}
   {\partial\log\Lambda\partial a} = 2a,
   \qquad
    \left(\frac{\pi}{\omega}\right)^{\mathrm{pert}} 
   = - \frac1{2\sqrt{-1}} \left(\pd{u}{a}\right)^{\mathrm{pert}}
   = \sqrt{-1}a,
\end{equation*}
\end{NB}

\subsection{Blow-up formula}

Similarly we put the perturbative part to the correlation function on
the blow-up as
\begin{multline*}
  \bZ_{c_1=kC}(\ve_1,\ve_2,\vec{a},\vec{m};t;\Lambda)
\\
  \defeq
  \exp\left[
    - \sum_{\vec{\alpha}\in\Delta}\gamma_{\ve_1,\ve_2}(
    \langle\vec{a},\vec{\alpha}\rangle;\Lambda)
    + \sum_{f, \alpha} \delta_{\ve_1,\ve_2}(a_\alpha+m_f;\Lambda)
  \right]
  \bZin(\ve_1,\ve_2,\vec{a},\vec{m};t;\Lambda).
\end{multline*}
As in \cite[\S4.4]{NY2}, we get the following
\begin{multline}\label{eq:blow-up1}
    \bZ_{c_1=kC}(\ve_1,\ve_2,\vec{a},\vec{m};t;\Lambda)
\\
    = 
    \exp \left[
      \frac{t}{\bgamma} \left(\left(
        \frac{\Nc}{12}(2\Nc + N_f-2) 
        + \frac{N_f}{2}
        \frac{k^2}{\Nc}\right)(\ve_1+\ve_2)
      + 
       (\frac{\Nc}2-k)\sum_f m_f
      \right)
      \right]
\\
    \times 
    \sum_{\vec{k}}
    \begin{aligned}[t]
    & Z\left(\ve_1,\ve_2-\ve_1,\vec{a}+\ve_1\vec{k},
      \vec{m}+\left(\frac{k}\Nc - \frac12\right)\ve_1\vec{e};
    \Lambda e^{t\ve_1/\bgamma}\right)
    \\
    &\quad\times 
    Z\left(\ve_1-\ve_2,\ve_2,\vec{a}+\ve_2\vec{k},
      \vec{m}+\left(\frac{k}\Nc - \frac12\right)\ve_2\vec{e};
      \Lambda e^{t\ve_2/\bgamma}\right)
    \end{aligned}
\end{multline}
by analyzing the fixed points in $\bM(\Nc,k,n)$ and then using a
difference equation satisfied by the perturbative term.
Here $\vec{k}$ runs over
\begin{equation*}
  \left\{ \vec{k} = (k_1,\dots, k_{\Nc})\in\Q^{\Nc}\left|\,
    \sum k_\alpha = 0, k_\alpha \equiv - \frac{k}{\Nc} \bmod\Z\right.\right\}.
\end{equation*}
This is slightly different from the $\vec{k}$ which appeared in the
parametrization of the fixed point set $\bM(r,k,n)^{\hT}$: We
subtract $k/\Nc$ from each factor so that the sum of entries becomes
$0$.

The complete proof will be given in \cite{NY4}, but is a straightforward modification of the original one.

In \cite[Th.~2.1]{perv3} we proved the following vanishing theorem:
\begin{equation}\label{eq:vanish}
  \frac{\bZ_{c_1=0}(\ve_1,\ve_2,\vec{a},\vec{m};t;\Lambda)}
  {Z(\ve_1,\ve_2,\vec{a},\vec{m};\Lambda)}
  = 1 + O(t^{\max(\Nc+1,2\Nc-N_f)}).
\end{equation}
This is a generalization of the vanishing theorem for the pure theory
($N_f = 0$), which was proved by the dimension counting argument in
\cite{NY1}. The proof of this generalization requires the theory of
perverse coherent sheaves in \cite{perv1,perv2,perv3}, but there is a
similar flavor with the original one. In particular, the exponent
$2\Nc-N_f$, which is written $\bgamma$ here, comes from the formula
for $\deg\left(\Eu(\Vcal\otimes K_{\C^2}^{1/2}\otimes M) \cap
  [M(\Nc,n)]\right) = (2\Nc - N_f) n = \gamma n$.

From \eqref{eq:blow-up1} together with \eqref{eq:vanish}, we can prove
\begin{enumerate}
\item $\ve_1\ve_2\log Z(\ve_1,\ve_2,\vec{a},\vec{m};\Lambda)$ is
  regular at $\ve_1, \ve_2 = 0$.
\item The instanton part satisfies
\(
   \Zin(\ve_1,-2\ve_1,\vec{a},\vec{m};\Lambda)
   =  \Zin(2\ve_1,-\ve_1,\vec{a},\vec{m};\Lambda).
\)    
\end{enumerate}
The proofs of these assertions are exactly as in \cite[\S5.2]{NY2} and
\cite[Lem.~7.1]{NY1} respectively. They will be reproduced in
\cite{NY4} for this version, and are not repeated here.

We expand the partition function as in \eqref{eq:expand}:
\begin{multline*}
  \ve_1\ve_2\log Z(\ve_1,\ve_2,\vec{a},\vec{m};\Lambda)
\\
  = F_0(\vec{a},\vec{m};\Lambda) + (\ve_1+\ve_2) H(\vec{a},\vec{m};\Lambda)
  + \ve_1\ve_2 A(\vec{a},\vec{m};\Lambda) 
  + \frac{\ve_1^2+\ve_2^2}3 B(\vec{a},\vec{m};\Lambda) + \cdots.
\end{multline*}
From the symmetry property (2) of $Z$, we see that $H$ comes only from
the perturbative part. This is already explained above. As in
\cite[\S5.3]{NY2} (which has the sign mistake) we have
\begin{equation*}
  H(\vec{a},\vec{m};\Lambda)
  = - \pi\sqrt{-1} \langle \vec{a},\rho\rangle,
\end{equation*}
where $\rho$ is one half of the sum of the positive roots.

As in \cite[\S6]{NY2} we can take the limit of \eqref{eq:blow-up1} to
get
\begin{multline}\label{eq:blow-up2}
  \lim_{\ve_1,\ve_2\to 0}
  \frac{\bZ_{c_1=kC}(\ve_1,\ve_2,\vec{a},\vec{m};t;\Lambda)}
  {Z(\ve_1,\ve_2,\vec{a},\vec{m};\Lambda)}
\\
  =
  \exp\Biggl[
  \begin{aligned}[t]
    & -\frac12
    \sum_{f,f'} \frac{\partial^2 F_0}{\partial m_f\partial m_{f'}}
    \left(\frac{k}{\Nc}-\frac12\right)^2
    + A - B
\\
    & 
    - \frac{t}{\bgamma}\left\{
      \sum_f 
      \left(\frac{k}{\Nc} - \frac12\right)
      \left(
      \frac{\partial^2 F_0}{\partial\log\Lambda\partial m_f}
      - {m_f}{\Nc}\right)
      \right\}
    - \frac1{\bgamma^2}
    \frac{\partial^2 F_0}{\partial(\log\Lambda)^2}
    \frac{t^2}2
    \Biggr]
  \end{aligned}
\\
  \times\Theta_{E_k}\left(
    -\frac1{2\pi\sqrt{-1}}
    \frac{\partial^2 F_0}{\partial \vec{a}\partial m_f}
      \left(\frac{k}{\Nc} - \frac12\right)
      -
      \frac{t}{\bgamma}
      \frac1{2\pi\sqrt{-1}}
      \frac{\partial^2 F_0}{\partial\vec{a}\partial\log\Lambda}
      \Biggm|\tau \right),
\end{multline}
where $\Theta_{E_k}$ is the Riemann theta function with the
characteristic $E_k$ as in \cite[\S B]{NY2}. The period matrix $\tau$
is given by
\[
  \tau_{kl} = - \frac1{2\pi\sqrt{-1}}\frac{\partial^2 F_0}{\partial
    a^k\partial a^l}.
\]
Here we change the coordinate from $(a_2,\dots,a_r)$ to the root
system coordinate defined as $\vec{a} = \sum a^i \alpha_i^\vee$ by
simple coroots
\(
   \alpha_i^\vee = (0,\dots, 0, \overset{i}{1},
   \overset{i+1}{-1},0,\dots,0)
\), 
$i=1,\dots,\Nc$. 

In the $\Nc=2$ case, we have $a^1 = a_1 = -a_2 = -a$.
Therefore we need to note
\(
  \partial/\partial \vec{a} = - \partial/\partial a
\)
when we use \eqref{eq:blow-up2}.

For a later purpose, we need another vanishing for $c_1\neq 0$:
\begin{equation}\label{eq:vanish2}
  \bZ_{c_1 = kC}(\ve_1,\ve_2,\vec{a},\vec{m};t;\Lambda)
  = O(t^{k(\Nc-k)})
\end{equation}
for $0 < k < \Nc$. This is \cite[Th.~2.5]{perv3}. This is again proved
by a version of the dimension counting argument, and $k(r-k)$ appears as
the dimension of the Grassmannian of $k$-planes in $\C^r$.

\subsection{Lower terms}

We assume $\Nc = 2$, $N_f = 1$ hereafter. Therefore $\bgamma = 3$.

Let us define a function $u$ by
\begin{equation}\label{eq:u}
  u \defeq -\frac1{\bgamma}
  \left(\pd{F_0}{\log\Lambda} - m^2
  \right)
  = a^2 -\frac1{\bgamma} \pd{\Fin_0}{\log\Lambda}
  .
\end{equation}
In the formula in \thmref{thm:partition}, this appears as the
coefficient of $x$. Note that $x$ is a variable for the $\mu$-class of
the point. Its gauge theoretic interpretation is already implicitly
used in the proof of \thmref{thm:partition}, but becomes clear if we
look again the partition function as follows: Consider
\begin{equation}\label{eq:corr}
  \frac{
  \sum_{n=0}^\infty \Lambda^{\bgamma n}
  \iota_{0*}^{-1} \pi_* 
  \left(
    \ch_2(\cE)/[0]\cap
    \Eu(\Vcal\otimes K_{\C^2}^{1/2}\otimes M)
    \cap [M(2,n)]\right)
  }{
  \sum_{n=0}^\infty \Lambda^{\bgamma n}
  \iota_{0*}^{-1} \pi_* 
  \left(
    \Eu(\Vcal\otimes K_{\C^2}^{1/2}\otimes M)
    \cap [M(2,n)]\right)},
\end{equation}
where $[0]$ is the equivariant homology class of the origin. The
denominator is nothing but $\Zin(\ve_1,\ve_2,a,m;\Lambda)$, and we
have $\ch_2(\cE)/[0] = a^2 - n\ve_1\ve_2$. Therefore this is equal to
\begin{equation*}
  a^2 - \frac{\ve_1\ve_2}{\bgamma}\pd{}{\log\Lambda}
  \log\Zin(\ve_1,\ve_2,a,m;\Lambda).
\end{equation*}
From the expansion \eqref{eq:expand}, this converges to \eqref{eq:u}
at $\ve_1$, $\ve_2 = 0$. In other words, the function $u$ is the limit
of \eqref{eq:corr} at $\ve_1$, $\ve_2 = 0$.

This can be generalized as follows. A power $u^p$ ($p > 0$) is the
limit of \eqref{eq:corr} where $\ch_2(\cE)/[0]$ is replace by its
$p^{\mathrm{th}}$ power, since terms with higher derivatives 
of $\Fin_0$ disappear at $\ve_1$, $\ve_2 = 0$.
\begin{NB}
  If we replace $\ch_2(\cE)/[0]$ by its $p^{\mathrm{th}}$ power, we get
  \begin{equation*}
    \Zin(\ve_1,\ve_2,a,m;\Lambda)^{-1}
    \sum_{k=0}^p\binom{p}{k} a^{2k}
    \left(-\frac{\ve_1\ve_2}\gamma\pd{}{\log\Lambda}\right)^{p-k}
    \Zin(\ve_1,\ve_2,a,m;\Lambda).
  \end{equation*}
  If we set $\mathcal F \defeq \ve_1\ve_2 \log\Zin$, we also get
  higher derivatives of $\mathcal F$. But they always come with
  $\ve_1\ve_2$ and disappear in the limit $\ve_1$, $\ve_2\to
  0$. Therefore we differentiate only $\Zin$, and get
  \begin{equation*}
    \sum_{k=0}^p\binom{p}{k} a^{2k}
    \left(-\frac1\gamma\pd{\Fin_0}{\log\Lambda}\right)^{p-k}
    = \left(a^2 - \frac1\gamma\pd{\Fin_0}{\log\Lambda}\right)^p
  \end{equation*}
at $\ve_1$, $\ve_2 = 0$.
\end{NB}

In \cite[Th.~2.6]{perv3} a general structural result of the blow-up
formula was proved. An integral
\begin{equation*}
\iota_{0*}^{-1}
   \widehat\pi_*\left(e^{t\mu(C)}\cap
     \Eu\left(\Vcal_0\otimes p^*(K_{\C^2}^{1/2})\otimes M\right)
   \cap [\bM(2,k,n)]\right)
\end{equation*}
appearing in the correlation function on the blow-up, can be written
as a linear combination of
\begin{equation*}
\iota_{0*}^{-1}
   \widehat\pi_*\left(
     \left(\ch_2(\cE)/[0]\right)^p
     \cap
     \Eu\left(\Vcal\otimes K_{\C^2}^{1/2}\otimes M\right)
   \cap [M(2,n-k(2-k)/4 - j)]\right)
\end{equation*}
for various $p$, $j\ge 0$, where coefficients are in
$\C[m,\ve_1,\ve_2][[t]]$. (In higher rank cases, we also need higher
Chern classes.) Moreover, the coefficients depend on $p$, $j$ (and
$k$), but not on $n$. Therefore the ratio
\[
  \frac{\bZ_{c_1=C}(\ve_1,\ve_2,a,m;t;\Lambda)}
  {Z(\ve_1,\ve_2,a,m;\Lambda)}
\]
is a formal power series in $t$ with coefficients in
$\C[m,\ve_1,\ve_2,u,\Lambda]$. Here the finiteness as power series in
$u$, $\Lambda$ comes from the cohomological degree reason.

In particular, when we expand the ratio in $t$, we only get  finitely
many powers of $\Lambda$, and the coefficients can be computed from
the integrals over finitely many moduli spaces.
By using the combinatorial expressions of the partition and
correlation functions, these are really possible to compute. We use a
Maple program to get
\begin{align}
  &
  \begin{aligned}[t]
  & \lim_{\ve_1,\ve_2\to 0}
  \frac{\bZ_{c_1=C}(\ve_1,\ve_2,a,m;t;\Lambda)}
  {Z(\ve_1,\ve_2,a,m;\Lambda)}
\\
  &\qquad
  = -\Lambda t - \frac{t^3}{3!}{\Lambda u}
  - \frac{t^5}{5!}\Lambda\left(u^2 + 2m \Lambda^3\right)
  - \frac{t^7}{7!}\Lambda\left(u^3 + 6u m \Lambda^3 + 6\Lambda^6 \right)
  + O(t^9).
  \end{aligned}
  \label{eq:coeff}
\end{align}
In fact, we have computed the ratio, before taking
$\lim_{\ve_1,\ve_2\to 0}$, but imposing $\ve_1 + \ve_2 = 0$
instead. Otherwise, the program runs very slow.

Let us check the cohomological degree, which we briefly mentioned
above. We have $\deg \Lambda = \deg m = 1$, $\deg u = 2$. Then the
coefficient of $t^n$ has  degree $n$.

\section{Seiberg-Witten curves}\label{sec:curve}

In this section we determine coefficients $F_0$, $A$, $B$ of $Z$ in
terms of certain `periods' of a family of elliptic curves, called the
{\it Seiberg-Witten curves}.
Our derivation of the Seiberg-Witten curves is analogous to 
Fintushel-Stern's method \cite{FS}: They described (in fact, before
Seiberg-Witten's work) that the blow-up formula of Donaldson
invariants is given by elliptic integrals, associated with cubic
curves of Weierstrass form. And the moduli parameter $u$ for the
cubics is coupled to the $\mu$-class of the point.
We define $u$, and derive cubic curves in the same way by using the
partition function $Z$ instead of Donaldson invariants. The cubic
curves are the Seiberg-Witten curves for the theory with one fundamental
matter. In fact, our derivation is much simpler, as we already see the
theta function in the blow-up formula\footnote{In higher rank cases,
  the story becomes much more complicated, as we need to show that the
  theta function is associated with a hyper-elliptic curve. See
  \cite{NY4}.}


\subsection{Elliptic curve}

\begin{NB}
Since $a_1 + a_2 = 0$, we set $a \defeq a_2$ and use it as a
fundamental variable. Since the partition function is invariant under
the exchange $a_1\leftrightarrow a_2$, we may consider $a^2$ as a
variable.

Our $\vec{a}$ is defined as $\vec{a} = \sum a^i \alpha_i^\vee$ by
simple coroots
\(
   \alpha_i^\vee = (0,\dots, 0, \overset{i}{1},
   \overset{i+1}{-1},0,\dots,0)
\)
$i=1,\dots,\Nc$.
\end{NB}

As before, we set
\begin{equation}\label{eq:tau}
  \tau \defeq -\frac1{2\pi\sqrt{-1}}\frac{\partial^2 F_0}{\partial a^2}
\end{equation}
and the corresponding elliptic curve $E_\tau$ with the period $\tau$.
\begin{NB}
  This is a tentative definition. My $F_0$ is a homogeneous version.
\end{NB}%
\begin{NB}
Since
\begin{equation*}
  \tau = \frac1{2\pi\sqrt{-1}}
  \left( -8 \log\frac{\sqrt{-1}(a_1-a_2)}{\Lambda}
  + \log\frac{(a_1+m)(a_2 + m)}{\Lambda} 
  \right)
  + O(\Lambda),
\end{equation*}
we have $\Ima\tau \gg 0$ if $\sqrt{-1}a_1$, $\sqrt{-1}a_2$,
$\sqrt{-1}m$, $\Lambda\in\R$, and 
\(
  \sqrt{-1}(a_1 - a_2) \gg \sqrt{-1}m \gg \Lambda > 0.
\)
Therefore $E_\tau$ is nonsingular.
\end{NB}%
\begin{NB}
  The second term
  \begin{equation*}
    \log\frac{(a_1+m)(a_2 + m)}{\Lambda} 
    = \log \frac{-(\sqrt{-1}m)^2 + (\sqrt{-1}a)^2}{\Lambda}
  \end{equation*}
  can be absorbed into the first term.
\end{NB}%
We put
\begin{equation*}
  q = \exp(\pi\sqrt{-1}\tau) 
  = \exp\left(-\frac12\frac{\partial^2 F_0}{\partial a^2}\right).
\end{equation*}
\begin{NB}
  This $q$ is Kota's $\sqrt{q}$.
\end{NB}

We have defined $u$ in \eqref{eq:u}. Since
\begin{equation*}
  u = a^2 + O(\Lambda),
\end{equation*}
we can take $u$ as a variable instead of $a^2$ if $\Lambda$ is
sufficiently small. This viewpoint will be taken later since the
curve $E_\tau$ will be explicitly given as a cubic curve so that
its coefficients are polynomials in $u$. This $u$ is the coordinate of
what Seiberg-Witten called the {\it $u$-plane}, a family of vacuum
states.

We realize the elliptic curve $E_\tau$ as $\C/(\Z\omega + \Z\omega')=
\C/(\Z\omega + \Z\omega\tau)$, where
\begin{equation}\label{eq:duda}
  \omega \defeq - 2\pi\sqrt{-1}\left(\pd{u}{a}\right)^{-1}
  = \left( \frac1{2\pi\sqrt{-1}}\frac1{\bgamma}
    \frac{\partial^2 F_0}{\partial a\partial\log\Lambda}
    \right)^{-1}.
    \begin{NB}
      \qquad
      \frac{\pi}\omega
      = \frac{\sqrt{-1}}2\pd{u}a
    \end{NB}
\end{equation}
\begin{NB}
  Careful on the sign. This is 
  \begin{equation*}
    -\left( \frac1{2\pi\sqrt{-1}}\frac1{\bgamma}
    \frac{\partial^2 F_0}{\partial \vec{a}\partial\log\Lambda}
    \right)^{-1}
  \end{equation*}
\end{NB}%
Using the Weierstrass $\wp$-function associated with
$\Z\omega+\Z\omega'$, we can realize $E_\tau$ in the Weierstrass form:
\begin{equation*}
  y^2 = 4x^3 - g_2 x - g_3.
\end{equation*}
\begin{NB}
Therefore
\begin{equation*}
  \int_{A} \frac{dx}{\sqrt{4x^3 - g_2 x - g_3}}
   = 2\int_{\infty}^{e_1} \frac{dx}{\sqrt{4x^3 - g_2 x - g_3}}
   = 2\pi\sqrt{-1}\left(\pd{u}{a}\right)^{-1}
\end{equation*}
\end{NB}

Then the blow-up formula for the $c_1 = C$ case \eqref{eq:blow-up2}
can be re-written in terms of the $\sigma$-function:
\begin{equation*}
  \lim_{\ve_1,\ve_2\to 0}
  \frac{\bZ_{c_1=C}(\ve_1,\ve_2,a,m;t;\Lambda)}
  {Z(\ve_1,\ve_2,a,m;\Lambda)}
  = - \exp\left[A-B
    -t^2\left\{
    \frac1{2\bgamma^2}\frac{\partial^2 F_0}{\partial\log\Lambda^2}
    +\frac{\pi^2}{6\omega^2} E_2(\tau)\right\}
    \right]
  \sigma(t) 
  \frac{\theta_{11}'(0)}{\omega}.
\end{equation*}
\begin{NB}
  I still need to check the sign. It should come from $H$. Since we
  made a mistake on $H$, the theta characteristic must be $-\theta_{11}$.
\end{NB}%
\begin{NB}
  \begin{equation*}
    \sigma(t) = \omega \exp({\nicefrac{\eta}\omega t^2})
    \frac{\theta_{11}(\nicefrac{t}\omega)}{\theta'_{11}(0)}
    = \omega \exp({\nicefrac{\pi^2}{6\omega^2}E_2(\tau) t^2})
    \frac{\theta_{11}(\nicefrac{t}\omega)}{\theta'_{11}(0)}.
  \end{equation*}
\end{NB}

We compare the expansion
\begin{multline*}
  e^{-Tt^2}\sigma(t)
  = t - {T} t^3 + \left(\frac{T^2}2 - \frac{g_2}{2\cdot 5!}\right)
  t^5
  + \left(
    -\frac{T^3}{3!}
    + \frac{T g_2}{2\cdot 5!}
    -\frac{6g_3}{7!}
  \right)
  t^7 + \cdots
\end{multline*}
\begin{NB}
  \begin{equation*}
    \sigma(t) = 
    t - \frac{g_2}{2}\frac{t^5}{5!} - 6g_3\frac{t^7}{7!} + \cdots.
  \end{equation*}
  \end{NB}%
with our computation of lower terms of the blow-up formula
\eqref{eq:coeff}. We get
\begin{align}
   & \exp(A-B) \frac{\theta_{11}'(0)}{\omega} = \Lambda, \label{eq:AB}
\\
   & \frac1{2\bgamma^2}\frac{\partial^2 F_0}{\partial\log\Lambda^2}
    +\frac{\pi^2}{6\omega^2} E_2(\tau) = - \frac{u}{6}, \label{eq:d^2F}
\\
   & g_2 = \frac43 u^2 - 4 m \Lambda^3, \label{eq:g2}
\\
   & g_3 = -\frac8{27}u^3 + \frac43 u m \Lambda^3 - \Lambda^6. \label{eq:g3}
\end{align}
\begin{NB}
This is compatible with results in the pure theory, we have
  \(
    \exp(A - B) = \theta_{01}(0)^{-1}.
  \)
  On the other hand, we have
  \(
    \theta_{11}'(0) = -\pi\theta_{01}(0)\theta_{10}(0)\theta_{00}(0),
  \)
  and
  \(
    \omega = - 2\pi\sqrt{-1}\left(\pd{u}a\right)^{-1}
    = - \frac{\pi}\Lambda \theta_{00}(0)\theta_{10}(0),
  \)
  therefore
  \(
    \frac{\theta_{11}'(0)}{\omega}
    = \Lambda \theta_{01}(0).
  \)

  \eqref{eq:d^2F} is also compatible with what we know for the pure
  theory.
\end{NB}
In particular, the curve $E_\tau$ has the Weierstrass form
\begin{equation*}\label{eq:SW}
  y^2 = 4x^3 - (\frac43 u^2 - 4 m \Lambda^3) x 
   +\frac8{27}u^3 - \frac43 u m \Lambda^3 + \Lambda^6.
\end{equation*}
Replacing $x$ by $x+u/3$, we get
\begin{equation}\label{eq:SW2}
  y^2 = 4x^2(x + u) + 4m\Lambda^3 x + \Lambda^6.
\end{equation}
This is nothing but the Seiberg-Witten curve for the theory with one
fundamental matter, determined at first in \cite{SW2}. There is a  vast
literature on this curve. For example, \cite{AGMZ} was useful for the
authors.
\begin{NB}
  This is isomorphic to
  \begin{equation*}
    y^2 = (z^2 - u)^2 - 4\Lambda^3 (z+m).
  \end{equation*}
We take a solution $z = \alpha$ of the $\mathrm{RHS} = 0$, and set
$z = 1/X + \alpha$, $y = Y/X^2$. Then we have a cubic curve
in $X$, $Y$. We then make the quadratic term to vanish.
\end{NB}%

The discriminant $\Delta = g_2^3 - 27g_3^2$ is given by
\begin{NB}
  $\Delta = 16\left(\nicefrac{\pi}{\omega}\right)^{12}
  \left(\nicefrac{\theta_{11}'(0)}\pi\right)^8$.
\end{NB}%
\begin{equation}\label{eq:disc}
  \Delta = -\Lambda^6(16 u^3 - 16 u^2 m^2 - 72 um\Lambda^3 +64m^3\Lambda^3
  + 27\Lambda^6).
\end{equation}
\begin{NB}
  If $\sqrt{-1}a \gg \sqrt{-1}m \gg \Lambda > 0$, we have $\Delta >
  0$, as $u\approx a^2$ is negative.

  We have
  \begin{equation*}
    \left.\Delta\right|_{\Lambda=0}
    = - 16 \Lambda^6 u^2 (u - m^2).
  \end{equation*}
  Therefore $\Delta$ vanishes from some point around $u\approx m^2$
  when we move $u = -\infty$ to $u = 0$.
\end{NB}

Let $e_1 - u/3$, $e_2 - u/3$, $e_3 - u/3$ be the solutions of the right
hand side of \eqref{eq:SW2} $= 0$.
\begin{NB}
The right hand side is
$4(x - e_1 + u/3)(x - e_2 + u/3)(x - e_3 + u/3)$.
\end{NB}%
We number them as in \cite[p.361]{Bateman}:
\begin{equation}
  \label{eq:e_}
\begin{gathered}
  e_1 = \frac13 \left(\frac{\pi}{\omega}\right)^2 
  (\theta_{00}^4 + \theta_{01}^4),
\quad
  e_2 = \frac13 \left(\frac{\pi}{\omega}\right)^2 
  (\theta_{10}^4 - \theta_{01}^4),
\\
  e_3 = -\frac13  \left(\frac{\pi}{\omega}\right)^2 
  (\theta_{10}^4 + \theta_{00}^4).
\end{gathered}
\end{equation}
\begin{NB}
  When $q = e^{\pi\sqrt{-1}\tau} \to 0$, $\theta_{00}\to 1$,
  $\theta_{01}\to 1$, $\theta_{10}\to 0$. Therefore $e_1\to 2/3\,
  (\pi/\omega)^2$, $e_2$, $e_3\to -1/3\, (\pi/\omega)^2$.
  If we further expand in $\Lambda$, we have
  $\pi/\omega = -\sqrt{-1}a + O(\Lambda)$, and hence
\(
   e_1 \to -2/3\, a^2, e_2, e_3 \to 1/3\, a^2.
\)
If $\sqrt{-1}a\in\R$ as before, we have $e_1 > 0 > e_2, e_3$.
Hence when $u\to \infty$, $e_1 - u/3 \approx -u$ goes to $\infty$ while
$e_2 - u/3$, $e_3 - u/3$ goes to $0$.
\end{NB}%

We can revert the role of $u$ and $a$. We consider $u$ as a variable
and introduce the cubic curve \eqref{eq:SW2}. We define the function
$a$ by the formula \eqref{eq:duda}. Since $da/du\neq 0$, we can
consider $a$ (or $a^2$) as a variable.
Then we define $F_0$ by \eqref{eq:d^2F}.
\begin{NB}
  How we fix the constant ?
\end{NB}%

The blow-up formula is further simplified as
\begin{equation*}
    \lim_{\ve_1,\ve_2\to 0}
  \frac{\bZ_{c_1=C}(\ve_1,\ve_2,a,m;t;\Lambda)}
  {Z(\ve_1,\ve_2,a,m;\Lambda)}
  = - e^{ut^2/6}\sigma(t)\Lambda.
\end{equation*}
(cf.\ \cite[\S6.3]{NY2}.) This is the form of Fintushel-Stern's
blow-up formula for the Donaldson invariants if we replace the curve
appropriately, i.e., the Seiberg-Witten curve for the {\it pure\/}
theory.

\subsection{Seiberg-Witten differential}

In this subsection, we write $a$ as an integral of  a certain
differential form $dS$ on the Seiberg-Witten curve. It is the usual
framework to relate the Seiberg-Witten curve and the partition
function. This is not necessary for our computation of derivatives of
$F_0$, but we explain it for  completeness.

Let $Q(x)$ be the right hand side of \eqref{eq:SW2}.
We set
\begin{equation*}
  dS \defeq 
  \frac{Q'(x) dx}{4x y} 
  \begin{NB}
    = \frac{dx}y
    \left(3 x + 2u + \frac{m\Lambda^3}x\right)
  \end{NB}%
.
\end{equation*}
We differentiate \eqref{eq:SW2} to get
\begin{equation*}
  2y dy = Q'(x) dx.
\end{equation*}
Therefore
\begin{NB}
\begin{equation*}
  \frac{2dy}x
  = \frac{Q'(x) dx}{xy}
\end{equation*}
and hence
\end{NB}%
\begin{equation*}
  dS = \frac{dy}{2x}.
\end{equation*}

We differentiate \eqref{eq:SW2} by $u$ after setting $y$ to be
constant:
\begin{equation*}
  0 = Q'(x) 
  \left.\pd{x}{u}\right|_{y=\mathrm{const}}
  + 4x^2.
\end{equation*}
Hence
\begin{equation*}
  \left.\pd{}{u} dS\right|_{y=\mathrm{const}}
  = - \frac{dy}{2x^2}  \left.\pd{x}{u}\right|_{y=\mathrm{const}}
  = \frac{2 dy}{Q'(x)}
  = \frac{dx}y.
\end{equation*}
Therefore
\begin{equation}\label{eq:a}
  a = \frac1{2\pi\sqrt{-1}} \int_{A} dS
\end{equation}
up to a constant independent of $u$.

Note that $dS$ has a pole at $x=0$. We have $y = \pm \Lambda^3$,
hence the residue is
\begin{equation*}
  \Res_{x=0,y=\pm \Lambda^3} dS = \pm m.
\end{equation*}
Therefore we need to specify the $A$-cycle in \eqref{eq:a}, otherwise
the residue is well-defined only up to $\Z m$.
This is possible by studying the perturbative part of the integral,
but we leave the details to \cite{NY4}.

\begin{NB}
We have
\begin{equation*}
  \omega = 2 \int_{\infty}^{e_1-u/3} \frac{dx}y.
\end{equation*}
Therefore
\begin{equation}\label{eq:a2}
  a = \frac1{\pi\sqrt{-1}} \int_{\infty}^{e_1-u/3} dS.
\end{equation}
\end{NB}%

\subsection{Genus $1$ part}\label{subsec:genus1}

We next determine the coefficients $A$ and $B$. This was done in
\cite[\S7.1]{NY2} for the pure theory. We use the same method.

Consider the blow-up formula \eqref{eq:blow-up1} for $c_1 = C$
\begin{NB}
we need a formula before $\ve_1,\ve_2=0$, not only \eqref{eq:coeff}.
\end{NB}%
and take the coefficient of $t^0\cdot (\ve_1+\ve_2)$. By
\eqref{eq:vanish2} it is zero.
\begin{NB}
We need a detailed formula before taking $\ve_1$, $\ve_2\to 0$.  
\end{NB}%
As in [loc.\ cit.] we get
\begin{equation*}
  \pd{}{a}(A - \frac13 B) = - \frac13 \pd{}{a} \log
  \theta_{11}'(0).
\end{equation*}
Therefore we have
\begin{equation*}
  \exp(A - \frac13 B) = C  \theta_{11}'(0)^{-1/3}
\end{equation*}
for some constant $C$ independent of $a$. Together with \eqref{eq:AB}
we get
\begin{equation*}
  \exp A = \left( C^3 \Lambda^{-1} \omega^{-1}\right)^{1/2},
\qquad
  \exp B = \left( C \Lambda^{-1} \omega^{-1} \right)^{3/2} \theta_{11}'(0)
  = C^{3/2} (2\pi)^{-1/2} \Lambda^{-3/2} \Delta^{1/8},
\end{equation*}
where $\Delta = 16\left(\nicefrac{\pi}{\omega}\right)^{12}
\left(\nicefrac{\theta_{11}'(0)}\pi\right)^8$ is the discriminant.

\begin{NB}
The constant $C$ will be determined later.  
\end{NB}

The perturbative part of $\exp A$ is 
\begin{equation*}
  \left(\frac{-2\sqrt{-1}a}\Lambda\right)^{1/2}.
\end{equation*}
On the other hand, $\omega^{-1/2} =
(-2\pi\sqrt{-1})^{-1/2}(\nicefrac{\partial{u}}{\partial a})^{1/2}$ has
\begin{equation*}
  (-2\pi\sqrt{-1})^{-1/2} \sqrt{2a}.
\end{equation*}
Therefore
\begin{equation*}
   \exp A \left(\frac{-\sqrt{-1}}\Lambda\pd{u}a\right)^{-1/2}
\end{equation*}
has the perturbative part $1$. On the other hand, from the discussion
above, this is a constant independent of $a$. From the degree
consideration as in [loc.\ cit.], it is a homogeneous element. However
the instanton part is a formal power series in $\Lambda/a$ and
$m/a$. Therefore it must be $1$.
\begin{NB}
  $C = -(2\pi)^{1/3}$.
\end{NB}
Hence
\begin{equation}\label{eq:genus1}
  \exp A = \left(\frac{-\sqrt{-1}}\Lambda\pd{u}a\right)^{1/2},
\qquad
  \exp B = \sqrt{-1} \Lambda^{-3/2}\Delta^{1/8}
  \begin{NB}
  = \frac{\theta_{11}'(0)}{\omega\Lambda} \exp A
  \end{NB}%
  .
\end{equation}

\begin{NB}
  It seems that the sign $-1$ was missing in \cite[(7.2)]{NY2}. And it
  seems that this mistake does not effect the main result of
  \cite{GNY}, since we only use
  $(\exp A)^{\chi(X)}(\exp B)^{\sigma(X)} = (\exp A)^{-\sigma(X) + 4}
  = (\exp A)^4 \exp(B - A)^{\sigma (X)}$.
\end{NB}

\begin{NB}
\end{NB}

\subsection{Derivatives of $F_0$}

We will redo the computation in this subsection  at the point
$a=m$ again later, so the reader can safely jump to the next section. But we
just want to point out that the derivatives of $F_0$ can be computed
before specializing $a=m$.

Let us re-write the blow-up formula \eqref{eq:blow-up2} for $c_1=0$ in
terms of the $\sigma$-function:
\begin{multline}\label{eq:sigma}
    \lim_{\ve_1,\ve_2\to 0}
  \frac{\bZ_{c_1=0}(\ve_1,\ve_2,a,m;t;\Lambda)}
  {Z(\ve_1,\ve_2,a,m;\Lambda)}
\\
  = 
  \begin{aligned}[t]
  & \theta_{01}(0)
  \exp\Biggl[
    -\frac18\frac{\partial^2 F_0}{\partial m^2}
    + A-B
    -\frac{\eta}{\omega}\left(\frac{\omega}{4\pi\sqrt{-1}}
    \frac{\partial^2 F_0}{\partial a\partial m}\right)^2
\\
   & \qquad +
   t\left\{\frac1{\bgamma}\left(
       m + \frac12
       \frac{\partial^2 F_0}{\partial\log\Lambda\partial m}\right)
     + \frac{\eta}{2\pi\sqrt{-1}}
     \frac{\partial^2 F_0}{\partial a\partial m}
   \right\} + \frac{ut^2}6 \Biggr]
   \sigma_3(t - \frac{\omega}{4\pi\sqrt{-1}} 
    \frac{\partial^2 F_0}{\partial a\partial m}),
  \end{aligned}
\end{multline}
where $\eta = \zeta(\omega/2) = \pi^2 E_2(\tau)/6\omega$.
\begin{NB}
Recall
  \begin{equation*}
    \sigma_3(t) \defeq
    \exp(\nicefrac{\eta}{\omega} t^2)
    \frac{\theta_{01}(\nicefrac{t}{\omega})}{\theta_{01}(0)}.
  \end{equation*}
We have $\sigma_3(t) = \sigma_3(-t)$.
\end{NB}%
Taking the coefficients of $t^0$, $t^1$, $t^2$ and comparing with
\eqref{eq:vanish}, we get
\begin{align}
  &
  \theta_{01}(0)
  \exp\Biggl[
    -\frac18\frac{\partial^2 F_0}{\partial m^2}
    + A-B
    - {\eta}{\omega}\left(\frac{1}{4\pi\sqrt{-1}}
    \frac{\partial^2 F_0}{\partial a\partial m}\right)^2
  \Biggr]
   \sigma_3(-\frac{\omega}{4\pi\sqrt{-1}} 
    \frac{\partial^2 F_0}{\partial a\partial m}) = 1,
    \label{eq:d^2F/dm^2}
\\
  &
  \frac1{\bgamma}\left(
       m + \frac12
       \frac{\partial^2 F_0}{\partial\log\Lambda\partial m}\right)
     + \frac{\eta}{2\pi\sqrt{-1}}\frac{\partial^2 F_0}{\partial a\partial m}
     + \frac{d}{dt}(\log\sigma_3)(-\frac{\omega}{4\pi\sqrt{-1}} 
    \frac{\partial^2 F_0}{\partial a\partial m}) = 0,
    \label{eq:d^2F/Lm}
\\
  & \frac{u}{3}
  + \frac{d^2}{dt^2} (\log\sigma_3)(-\frac{\omega}{4\pi\sqrt{-1}} 
    \frac{\partial^2 F_0}{\partial a\partial m}) = 0.
\end{align}

Since the second derivative of $\log\sigma$ is $(-1)$ times the
Weierstrass $\wp$-function, we have
\begin{equation}\label{eq:d^2Fdadm}
  \begin{split}
  \frac{u}3 &= - \frac{d^2}{dt^2} (\log\sigma_3)(-\frac{\omega}{4\pi\sqrt{-1}} 
    \frac{\partial^2 F_0}{\partial a\partial m})
    = \wp(\frac{\omega_3}2 - \frac{\omega}{4\pi\sqrt{-1}} 
    \frac{\partial^2 F_0}{\partial a\partial m})
  \end{split}
\end{equation}
from the last equation.
\begin{NB}
  We have $\sigma_3(t) = \sigma(t+\omega_3/2)/\sigma(\omega_3/2)
  \exp(-t\eta_3)$. Therefore
  \begin{equation*}
    \begin{split}
    &  \frac{d}{dt} (\log\sigma_3)(t)
    = \frac{d}{dt} (\log\sigma)(t + \omega_3/2) - \eta_3
    = \zeta(t + \omega_3/2) - \eta_3,,
\\
    & \frac{d^2}{dt^2} (\log\sigma_3)(t)
    = \frac{d^2}{dt^2} (\log\sigma)(t + \omega_3/2)
    = - \wp(t + \omega_3/2).
    \end{split}
  \end{equation*}
Note that $\omega_\alpha$ in \cite{Bateman} is our
$\omega_\alpha/2$. But $\eta_\alpha$ is the same.
\end{NB}%
Therefore
\begin{equation*}
  - \frac{\omega}{4\pi\sqrt{-1}} 
    \frac{\partial^2 F_0}{\partial a\partial m}
    = \int_{\infty}^{0} \frac{dx}y - \frac{\omega_3}2
    = \int_{e_3 - u/3}^{0} \frac{dx}y,
\end{equation*}
where $y$ is as in \eqref{eq:SW2} and $u/3$ is replaced by $0$ since
the quadratic term of \eqref{eq:SW2} is $4u$. Note that this $0$ is
the point where $dS$ has a pole.

\begin{NB}
  We need to specify the choice of the path. Since $\wp$-function is
  doubly periodic, the equation determine
  $\frac{\omega}{4\pi\sqrt{-1}} 
    \frac{\partial^2 F_0}{\partial a\partial m}$ only up to
  the lattice $\Z\omega+\Z\omega'$.
\end{NB}

\begin{NB}
Here is the $\sigma$-version, instead of $\sigma_3$-version.

Let us re-write the blow-up formula \eqref{eq:blow-up2} for $c_1=0$ in
terms of the $\sigma$-function:
\begin{equation*}
  \begin{split}
  & \lim_{\ve_1,\ve_2\to 0}
  \frac{\bZ_{c_1=0}(\ve_1,\ve_2,\vec{a},\vec{m};t;\Lambda)}
  {Z(\ve_1,\ve_2,\vec{a},\vec{m};\Lambda)}
\\
  =\; &
  {\sqrt{-1}}\Lambda   \exp \Biggl[
  \begin{aligned}[t]
    &
    - \frac18 \left(\frac{\partial^2 F_0}{\partial m^2}
    + 2\frac{\partial^2 F_0}{\partial a\partial m}
    + \frac{\partial^2 F_0}{\partial a^2}\right)
    - \eta\omega\left\{
    \frac1{4\pi\sqrt{-1}}\left(
    \frac{\partial^2 F_0}{\partial {a}\partial m} + 
    \frac{\partial^2 F_0}{\partial {a}^2}
    \right)
      \right\}^2
\\
   &\quad + \frac{t}{\bgamma}\left\{
      \frac12
      \left(
      \frac{\partial^2 F_0}{\partial\log\Lambda\partial m}
      - 2{m}\right)
      \right\}
    + \pi\sqrt{-1} \frac{t}\omega
    \\
    & \qquad
    + \frac{t\eta}{2\pi\sqrt{-1}}\left(
    \frac{\partial^2 F_0}{\partial {a}\partial m} + 
    \frac{\partial^2 F_0}{\partial {a}^2}
    \right)
    + \frac{t^2 u}6
    \Biggr]
\\
   & \qquad\qquad\times \sigma\left(
     t - 
       \frac\omega{4\pi\sqrt{-1}}\left(
    \frac{\partial^2 F_0}{\partial {a}\partial m} + 
    \frac{\partial^2 F_0}{\partial {a}^2}
    \right)
     \right),
  \end{aligned}
  \end{split}
\end{equation*}
where $\eta = \zeta(\omega/2) = \pi^2 E_2(\tau)/6\omega$.

\begin{NB2}
Let us give the detailed calculation.

We start with
\begin{multline*}
  \lim_{\ve_1,\ve_2\to 0}
  \frac{\bZ_{c_1=0}(\ve_1,\ve_2,\vec{a},\vec{m};t;\Lambda)}
  {Z(\ve_1,\ve_2,\vec{a},\vec{m};\Lambda)}
\\
  =
  \exp\Biggl[
  \begin{aligned}[t]
    & -\frac18
    \frac{\partial^2 F_0}{\partial m^2}
    + A - B
    + \frac{t}{\bgamma}\left\{
      \frac12
      \left(
      \frac{\partial^2 F_0}{\partial\log\Lambda\partial m}
      - 2{m}\right)
      \right\}
    - \frac1{\bgamma^2}
    \frac{\partial^2 F_0}{\partial(\log\Lambda)^2}
    \frac{t^2}2
    \Biggr]
  \end{aligned}
\\
  \times\theta_{01}\left(
    -\frac1{4\pi\sqrt{-1}}
    \frac{\partial^2 F_0}{\partial {a}\partial m}
      +
      \frac{t}{\bgamma}
      \frac1{2\pi\sqrt{-1}}
      \frac{\partial^2 F_0}{\partial{a}\partial\log\Lambda}
      \Biggm|\tau \right).
\end{multline*}
This is nothing but \eqref{eq:blow-up2}.
Let
\begin{equation*}
   z\defeq  -\frac1{4\pi\sqrt{-1}}
    \frac{\partial^2 F_0}{\partial {a}\partial m}
      +
      \frac{t}{\bgamma}
      \frac1{2\pi\sqrt{-1}}
      \frac{\partial^2 F_0}{\partial{a}\partial\log\Lambda}
   = -\frac1{4\pi\sqrt{-1}}
    \frac{\partial^2 F_0}{\partial {a}\partial m} + \frac{t}\omega.
\end{equation*}
Set
\begin{equation*}
  \delta \defeq 
    \frac1{4\pi\sqrt{-1}}\left(
    \frac{\partial^2 F_0}{\partial {a}\partial m} + 
    \frac{\partial^2 F_0}{\partial {a}^2}
    \right).
\end{equation*}
Hence
\begin{equation*}
  z +\frac12\tau =\frac{t}\omega - \delta
  = \frac{t}\omega -
    \frac1{4\pi\sqrt{-1}}\left(
    \frac{\partial^2 F_0}{\partial {a}\partial m} + 
    \frac{\partial^2 F_0}{\partial {a}^2}
    \right).
\end{equation*}
Then we have
\begin{equation*}
  \begin{split}
  \theta_{01}(z) &= \exp\left(\frac{\pi\sqrt{-1}}4\tau
    + \pi\sqrt{-1}(z + \frac12)\right) \theta_{11}(z+\frac12 \tau)
\\
  & = \exp\left(\frac{\pi\sqrt{-1}}4\tau
    + \pi\sqrt{-1}(z + \frac12)\right)
  \sigma(\omega(z+\frac12\tau))
  \frac{\theta_{11}'(0)}{\omega}
  \exp(-{\eta}{\omega} (z+\frac12\tau)^2)
\\
  & = \exp\left(-\frac{\pi\sqrt{-1}}4\tau
    + \pi\sqrt{-1}(\frac{t}\omega - \delta)
    + \frac{\pi\sqrt{-1}}2 \right)
  \sigma(t - \delta\omega)
  \frac{\theta_{11}'(0)}{\omega}
  \exp(-{\eta}{\omega} (\frac{t}\omega - \delta)^2)
  \end{split}
\end{equation*}
Therefore
\begin{equation*}
  \begin{split}
  & \lim_{\ve_1,\ve_2\to 0}
  \frac{\bZ_{c_1=0}(\ve_1,\ve_2,\vec{a},\vec{m};t;\Lambda)}
  {Z(\ve_1,\ve_2,\vec{a},\vec{m};\Lambda)}
\\
  =\; &
  \Lambda\exp \Biggl[
  \begin{aligned}[t]
    &
      \frac{\pi\sqrt{-1}}2
    - \frac18 \left(\frac{\partial^2 F_0}{\partial m^2}
    - \frac{\partial^2 F_0}{\partial a^2}\right)
      + \frac{t}{\bgamma}\left\{
      \frac12
      \left(
      \frac{\partial^2 F_0}{\partial\log\Lambda\partial m}
      - 2{m}\right)
      \right\}
    \\
    & \quad
    +\pi\sqrt{-1}
    \left(
      \frac{t}\omega - 
    \frac1{4\pi\sqrt{-1}}\left(
    \frac{\partial^2 F_0}{\partial {a}\partial m} + 
    \frac{\partial^2 F_0}{\partial {a}^2}
      \right)\right)
    \\
    & \qquad
    - \eta\omega\left\{
      \frac{t}\omega -
    \frac1{4\pi\sqrt{-1}}\left(
    \frac{\partial^2 F_0}{\partial {a}\partial m} + 
    \frac{\partial^2 F_0}{\partial {a}^2}
    \right)
      \right\}^2
    - \frac1{\bgamma^2}
    \frac{\partial^2 F_0}{\partial(\log\Lambda)^2}
    \frac{t^2}2
    \Biggr]
\\
   & \qquad\qquad\times \sigma\left(
     t - 
       \frac\omega{4\pi\sqrt{-1}}\left(
    \frac{\partial^2 F_0}{\partial {a}\partial m} + 
    \frac{\partial^2 F_0}{\partial {a}^2}
    \right)
     \right)
  \end{aligned}
\\
  =\; &
  {\sqrt{-1}}\Lambda   \exp \Biggl[
  \begin{aligned}[t]
    &
    - \frac18 \left(\frac{\partial^2 F_0}{\partial m^2}
    + 2\frac{\partial^2 F_0}{\partial a\partial m}
    + \frac{\partial^2 F_0}{\partial a^2}\right)
    - \eta\omega\left\{
    \frac1{4\pi\sqrt{-1}}\left(
    \frac{\partial^2 F_0}{\partial {a}\partial m} + 
    \frac{\partial^2 F_0}{\partial {a}^2}
    \right)
      \right\}^2
\\
   &\quad + \frac{t}{\bgamma}\left\{
      \frac12
      \left(
      \frac{\partial^2 F_0}{\partial\log\Lambda\partial m}
      - 2{m}\right)
      \right\}
    + \pi\sqrt{-1} \frac{t}\omega
    \\
    & \qquad
    + \frac{t\eta}{2\pi\sqrt{-1}}\left(
    \frac{\partial^2 F_0}{\partial {a}\partial m} + 
    \frac{\partial^2 F_0}{\partial {a}^2}
    \right)
    + \frac{t^2 u}6
    \Biggr]
\\
   & \qquad\qquad\times \sigma\left(
     t - 
       \frac\omega{4\pi\sqrt{-1}}\left(
    \frac{\partial^2 F_0}{\partial {a}\partial m} + 
    \frac{\partial^2 F_0}{\partial {a}^2}
    \right)
     \right)
  \end{aligned}
  \end{split}
\end{equation*}
\end{NB2}

\begin{NB2}
This is an earlier attempt:

We use
\begin{equation*}
  \sigma_3(t + \frac{\omega}{4\pi\sqrt{-1}} 
    \frac{\partial^2 F_0}{\partial a\partial m})
  = 
  \sigma_3(- t - \frac{\omega}{4\pi\sqrt{-1}} 
    \frac{\partial^2 F_0}{\partial a\partial m})
  = \sigma(\frac{\omega_3}2 - t - \frac{\omega}{4\pi\sqrt{-1}} 
    \frac{\partial^2 F_0}{\partial a\partial m})
    \frac{\exp(-t\eta_3)}{\sigma(\omega_3/2)},
\end{equation*}
where $\eta_3 = \zeta(\omega_3)$.
Note that
\begin{equation*}
  \frac{\omega_3}2 - \frac{\omega}{4\pi\sqrt{-1}} 
    \frac{\partial^2 F_0}{\partial a\partial m}
  = - \frac{\omega}{4\pi\sqrt{-1}}\left(
    \frac{\partial^2 F_0}{\partial a^2}
     + \frac{\partial^2 F_0}{\partial a\partial m}
    \right)
\end{equation*}
does not contain the term $\log(m-a)/\Lambda$ in the perturbative
part. Therefore we can evaluate this term at $a=m$.

The Legendre relation says $\eta \omega_3 - \eta_3 \omega =
\pi\sqrt{-1}$. Hence
\begin{equation*}
  \begin{split}
    & \exp(-t\eta_3) = \exp\left(t \frac{\pi\sqrt{-1}}\omega - t
      \eta\frac{\omega_3}{\omega} \right) = \exp\left(t
      \frac{\pi\sqrt{-1}}\omega - t \eta \tau\right)
    \\
    =\; & \exp\left(t \frac{\pi\sqrt{-1}}\omega + 
      \frac{t\eta}{2\pi\sqrt{-1}}\frac{\partial^2 F_0}{\partial a^2}\right).
  \end{split}
\end{equation*}
\end{NB2}

Taking the coefficients of $t^0$, $t^1$, $t^2$ and comparing with
\eqref{eq:vanish}, we get
\begin{align*}
  &
  1 = 
  \begin{aligned}[t]
  \sqrt{-1}\Lambda &
  \exp\Biggl[
    - \frac18 \left(\frac{\partial^2 F_0}{\partial m^2}
    + 2\frac{\partial^2 F_0}{\partial a\partial m}
    + \frac{\partial^2 F_0}{\partial a^2}\right)
    - \eta\omega\left\{
    \frac1{4\pi\sqrt{-1}}\left(
    \frac{\partial^2 F_0}{\partial {a}\partial m} + 
    \frac{\partial^2 F_0}{\partial {a}^2}
    \right)
      \right\}^2
  \Biggr]
\\
    & \quad \times 
   \sigma
   ,
  \end{aligned}
\\
  &
  0 = \frac1{\bgamma} \left\{
      \frac12
      \left(
      \frac{\partial^2 F_0}{\partial\log\Lambda\partial m}
      - 2{m}\right)
      \right\}
    + \frac{\pi\sqrt{-1}}\omega
    + \frac{\eta}{2\pi\sqrt{-1}}\left(
    \frac{\partial^2 F_0}{\partial {a}\partial m} + 
    \frac{\partial^2 F_0}{\partial {a}^2}
    \right)
    \Biggr]
     + \frac{d}{dt}(\log\sigma)
     ,
\\
  & 0 = \frac{u}{3}
  + \frac{d^2}{dt^2} (\log\sigma)
  ,
\end{align*}
where the $\sigma$-function is evaluated at
\begin{equation*}
     -\frac{\omega}{4\pi\sqrt{-1}} 
     \left(
    \frac{\partial^2 F_0}{\partial a\partial m}
    +
    \frac{\partial^2 F_0}{\partial a^2}
    \right)
  .
\end{equation*}
\end{NB}%

\section{Partition functions at the singular point}

Recall that we need to specialize $a=m$ in
\thmref{thm:partition}. At this point, the Seiberg-Witten curve is
singular, and many formulas are simplified.

\subsection{The special point $a=m$}

Recall that the period $\tau$ of the Seiberg-Witten curve was given by
the second derivative of $F_0$ with respect to $a$ \eqref{eq:tau}. Its
perturbative part is given by \eqref{eq:pert}. In particular, $q =
\exp(\pi\sqrt{-1}\tau)$ vanishes at $a=m$ since it contains a factor
$-a+m$. Therefore $\theta_{00}\to 1$, $\theta_{01}\to 1$,
$\theta_{10}\to 0$ at $a=m$, and hence we have $e_2 = e_3$ from
\eqref{eq:e_}. The cycle encircling $e_2$, $e_3$ vanishes and the
curve develops singularities.

\begin{NB}
In \eqref{eq:a2} we change the path to $e_2\to e_3$. Then we pick up
the residue of $dS$, as we explained above. Therefore
\begin{equation*}
  a = \frac1{\pi\sqrt{-1}}\int_{e_2-u/3}^{e_3-u/3} dS + m.
\end{equation*}
We consider the degeneration of the curve at $e_2 = e_3$.
We then have $\theta_{10} = 0$, i.e., $q = 0$. Therefore
\begin{gather*}
  e_1 = \frac23 \left(\frac{\pi}{\omega}\right)^2, \quad
  e_2 = e_3 = - \frac13\left(\frac{\pi}{\omega}\right)^2,
\quad
  g_2 = \frac43 \left(\frac{\pi}{\omega}\right)^4,\quad
  g_3 = \frac8{27} \left(\frac{\pi}{\omega}\right)^6.
\end{gather*}
In particular, we get
\begin{equation}\label{eq:g at a=m}
    \frac43 \left(\frac{\pi}{\omega}\right)^4
    = \frac43 u^2 - 4 m \Lambda^3,
\qquad
    \frac8{27} \left(\frac{\pi}{\omega}\right)^6
    = -\frac8{27}u^3 + \frac43 u m \Lambda^3 - \Lambda^6
\end{equation}
from (\ref{eq:g2}, \ref{eq:g3}). The discriminant $\Delta$ vanishes at
this point.

Since $dS$ is regular at the point $e_2$,
\begin{NB2}
  Recall $dS = \frac{Q'(x) dx}{4xy}$. Then $y = \sqrt{Q(x)}$ and
  $Q'(x)$ are both simple zero at $x = e_2 + u/3$.
\end{NB2}%
we have $a = m$. This is compatible with the formula for $B$
\eqref{eq:genus1}: the perturbative part of $\exp B$, and hence $\exp
B$ itself vanishes at $a=m$.

Note that $u$ is the solution of the cubic equation $\Delta = 0$ (see
\eqref{eq:disc}) satisfying $u = m^2 + O(\Lambda) = a^2 + O(\Lambda)$.
\end{NB}

The blow-up formula \eqref{eq:sigma} is {\it not\/} suitable for the
specialization $e_2 = e_3$, as it contains an expression $\partial^2
F_0/\partial a\partial m$, which has $\log(-a+m)/\Lambda$ in the
perturbative part.
We observe that
\begin{equation*}
  \frac{\omega_3}2 - \frac{\omega}{4\pi\sqrt{-1}} 
    \frac{\partial^2 F_0}{\partial a\partial m}
  = - \frac{\omega}{4\pi\sqrt{-1}}\left(
    \frac{\partial^2 F_0}{\partial a^2}
     + \frac{\partial^2 F_0}{\partial a\partial m}
    \right)
\end{equation*}
does not contain the term $\log(-a+m)/\Lambda$ in the perturbative
part. Hence we can evaluate this term at $a=m$.
Therefore we use  $\sigma$, instead of $\sigma_3$ in \eqref{eq:sigma}:
\begin{equation*}
  \begin{split}
  & \lim_{\ve_1,\ve_2\to 0}
  \frac{\bZ_{c_1=0}(\ve_1,\ve_2,\vec{a},\vec{m};t;\Lambda)}
  {Z(\ve_1,\ve_2,\vec{a},\vec{m};\Lambda)}
\\
  =\; &
  {\sqrt{-1}}\Lambda   \exp \Biggl[
  \begin{aligned}[t]
    &
    - \frac18 \left(\frac{\partial^2 F_0}{\partial m^2}
    + 2\frac{\partial^2 F_0}{\partial a\partial m}
    + \frac{\partial^2 F_0}{\partial a^2}\right)
    - \eta\omega\left\{
    \frac1{4\pi\sqrt{-1}}\left(
    \frac{\partial^2 F_0}{\partial {a}\partial m} + 
    \frac{\partial^2 F_0}{\partial {a}^2}
    \right)
      \right\}^2
\\
   &\quad + \frac{t}{\bgamma}\left\{
      \frac12
      \left(
      \frac{\partial^2 F_0}{\partial\log\Lambda\partial m}
      - 2{m}\right)
      \right\}
    + \pi\sqrt{-1} \frac{t}\omega
    \\
    & \qquad
    + \frac{t\eta}{2\pi\sqrt{-1}}\left(
    \frac{\partial^2 F_0}{\partial {a}\partial m} + 
    \frac{\partial^2 F_0}{\partial {a}^2}
    \right)
    + \frac{t^2 u}6
    \Biggr]
\\
   & \qquad\qquad\times \sigma\left(
     t - 
       \frac\omega{4\pi\sqrt{-1}}\left(
    \frac{\partial^2 F_0}{\partial {a}\partial m} + 
    \frac{\partial^2 F_0}{\partial {a}^2}
    \right)
     \right).
  \end{aligned}
  \end{split}
\end{equation*}
\begin{NB}
Wrong version.
  \begin{equation*}
  =\; 
  -i   \exp
  \begin{aligned}[t]
    &
    \Biggl[
    - \frac18 \left(\frac{\partial^2 F_0}{\partial m^2}
    + 2\frac{\partial^2 F_0}{\partial a\partial m}
    + \frac{\partial^2 F_0}{\partial a^2}\right)
    - \eta\omega\left\{
    \frac1{4\pi\sqrt{-1}}\left(
    \frac{\partial^2 F_0}{\partial {a}\partial m} + 
    \frac{\partial^2 F_0}{\partial {a}^2}
    \right)
      \right\}^2
\\
   &\quad + \frac{t}{\bgamma}\left\{
      \frac12
      \left(
      \frac{\partial^2 F_0}{\partial\log\Lambda\partial m}
      - 2{m}\right)
      \right\}
    -\pi\sqrt{-1} \frac{t}\omega
    \\
    & \qquad
    - \frac{t\eta}{2\pi\sqrt{-1}}\left(
    \frac{\partial^2 F_0}{\partial {a}\partial m} + 
    \frac{\partial^2 F_0}{\partial {a}^2}
    \right)
    + \frac{t^2 u}6
    \Biggr]
\\
   & \qquad\qquad\times \sigma\left(
     t + 
       \frac\omega{4\pi\sqrt{-1}}\left(
    \frac{\partial^2 F_0}{\partial {a}\partial m} + 
    \frac{\partial^2 F_0}{\partial {a}^2}
    \right)
     \right).
  \end{aligned}
  \end{equation*}
\end{NB}

\begin{NB}
Even earlier attempt:

Let us consider \eqref{eq:d^2Fdadm} at $e_2 = e_3$. Note that
\begin{equation*}
  - \frac{\omega_3}2 + \frac{\omega}{4\pi\sqrt{-1}} 
    \frac{\partial^2 F_0}{\partial a\partial m}
  = \frac{\omega}{4\pi\sqrt{-1}}\left(
    \frac{\partial^2 F_0}{\partial a^2}
     + \frac{\partial^2 F_0}{\partial a\partial m}
    \right)
\end{equation*}
does not contain the term $\log(m-a)/\Lambda$ in the perturbative
part. Therefore we can evaluate this term at $a=m$.
\begin{NB2}
  Note that
\begin{equation*}
  - \frac{\omega_3}2 = \int_{e_2-u/3}^{e_1-u/3} \frac{dx}y.
\end{equation*}
\end{NB2}%
Since $\wp(z)$ degenerates to a trigonometric function at $e_2 = e_3$,
we get
\begin{equation*}
  \frac{u}3 = e_2 - \frac{3e_2}{\sin^2(\sqrt{-3e_2} t)}
\end{equation*}
where $t = 
  \left.- \frac{\omega}{4\pi\sqrt{-1}}\left(
    \frac{\partial^2 F_0}{\partial a^2}
     + \frac{\partial^2 F_0}{\partial a\partial m}
    \right)\right|_{a=m}
$.
\begin{NB2}
It seems
\begin{equation*}
  \left.- \frac{\omega}{4\pi\sqrt{-1}}\left(
    \frac{\partial^2 F_0}{\partial a^2}
     + \frac{\partial^2 F_0}{\partial a\partial m}
    \right)\right|_{a=m}
  = \frac2{\sqrt{-3e_2}} \arctan
  \left(
    \frac{\sqrt{u-6e_2}}{3\sqrt{-e_2}}
  \right).
\end{equation*}
I still do not check that two expressions are compatible.
\end{NB2}

\begin{NB2}
  We have
  \begin{equation*}
    \int \frac{dx}{(x+A)\sqrt{x-2A}}
    = \frac2{\sqrt{3A}} \arctan\left(
      \sqrt{\frac{x-2A}{3A}}
      \right).
  \end{equation*}

  We have
  \begin{equation*}
    \int \frac{dx}{(x-e_2)\sqrt{x+2e_2}}
    = - \frac2{\sqrt{3e_2}} \tanh^{-1}\left(
      \sqrt{\frac{x+2e_2}{3e_2}}
      \right).
  \end{equation*}
\end{NB2}%
\begin{NB2}
  Let us compare the perturbative parts. We have
  \begin{equation*}
    3e_2 = - \left(\frac{\pi}{\omega}\right)^2
    = \frac14 \left(\pd{u}a\right)^2 = a^2 + \cdots,
\qquad
    u = a^2 + \cdots.
  \end{equation*}

From the left hand side, we
  have
  \begin{equation*}
    \frac\omega{2}
    \left(1 + \frac8{2\pi\sqrt{-1}}
      \log\frac{-2\sqrt{-1}a}{\Lambda}
    - \frac2{2\pi\sqrt{-1}}
   \log\frac{2a}{\Lambda}\right).
  \end{equation*}
  It seems that we lost something during we choose the path for the
  integration.
\end{NB2}

Similarly from \eqref{eq:d^2F/Lm} we get 
\begin{equation}
  \begin{split}
    & \frac1{\bgamma}\left(
       m + \frac12
       \frac{\partial^2 F_0}{\partial\log\Lambda\partial m}\right)
     = \frac{\eta}{2\pi\sqrt{-1}}\frac{\partial^2 F_0}{\partial a\partial m}
     - \frac{d}{dt}(\log\sigma_3)(\frac{\omega}{4\pi\sqrt{-1}} 
    \frac{\partial^2 F_0}{\partial a\partial m})
\\
  =\; &
  \frac{\eta}{2\pi\sqrt{-1}}\frac{\partial^2 F_0}{\partial a\partial m}
  - \eta \tau
  + \zeta(\frac{\omega_3}2 - \frac{\omega}{4\pi\sqrt{-1}} 
    \frac{\partial^2 F_0}{\partial a\partial m})
    + \frac{\pi\sqrt{-1}}{\omega},
  \end{split}
\end{equation}
where we have used Legendre's relation
\(
   \eta \tau - \eta_3 = \pi\sqrt{-1}/\omega.
\)

Similarly (\ref{eq:d^2F/dm^2},\ref{eq:d^2F/Lm}) can be evaluated
at $e_2 = e_3$. We have
\begin{equation*}
    \frac1{\bgamma}\left(
       m + \frac12
       \frac{\partial^2 F_0}{\partial\log\Lambda\partial m}\right)
     - \frac{\omega}{2\pi\sqrt{-1}}\frac{\partial^2 F_0}{\partial a\partial m}
     = - \zeta(
     - \frac{\omega}{4\pi\sqrt{-1}}\left(
    \frac{\partial^2 F_0}{\partial a^2}
     + \frac{\partial^2 F_0}{\partial a\partial m}
    \right)) + \eta_3 ?????
\end{equation*}
\end{NB}

\begin{NB}
In \cite[\S13.15]{Bateman} we find
\begin{equation*}
  \begin{split}
  & \wp(t) = e_2 - \frac{3e_2}{\sin^2(\sqrt{-3e_2}t)},
\\
  & \zeta(t) = -e_2 t + \sqrt{-3e_2}\cot(\sqrt{-3e_2}t),
\\
  & \sigma(t) = \frac1{\sqrt{-3e_2}} \sin(\sqrt{-3e_2}t) 
  \exp(-\frac12 e_2 t^2)
  \end{split}
\end{equation*}
at $e_2 = e_3$.
\end{NB}

Now we can specialize $e_2 = e_3$: the $\sigma$-function becomes
\begin{equation*}
  \sigma(t) = \frac\omega{\pi} \sin(\frac{\pi}{\omega}t) 
  \exp\left[\frac16 \left(\frac{\pi}{\omega}\right)^2 t^2\right].
\end{equation*}
We also note
\begin{equation*}
  \eta\omega =
  \begin{NB}
    \frac{\pi^2}6 E_2(\tau) =
  \end{NB}
  \frac{\pi^2}6
\end{equation*}
at $e_2 = e_3$.
Therefore
{\allowdisplaybreaks
\begin{equation*}
  \begin{split}
  & \lim_{\ve_1,\ve_2\to 0}
  \frac{\bZ_{c_1=0}(\ve_1,\ve_2,\vec{a},\vec{m};t;\Lambda)}
  {Z(\ve_1,\ve_2,\vec{a},\vec{m};\Lambda)}
\\
  =\; &
  \frac{\sqrt{-1}\omega\Lambda}{\pi}   \exp \Biggl[
  \begin{aligned}[t]
    &
    - \frac18 \left(\frac{\partial^2 F_0}{\partial m^2}
    + 2\frac{\partial^2 F_0}{\partial a\partial m}
    + \frac{\partial^2 F_0}{\partial a^2}\right)
\\
   &\quad + \frac{t}{\bgamma}\left\{
      \frac12
      \left(
      \frac{\partial^2 F_0}{\partial\log\Lambda\partial m}
      - 2{m}\right)
      \right\}
    + \pi\sqrt{-1} \frac{t}\omega
    + \frac{t^2}6\left(u + \left(\frac{\pi}\omega\right)^2\right)
    \Biggr]
\\
   & \qquad\qquad\times \sin\left(
     \frac{\pi}\omega t -
       \frac1{4\sqrt{-1}}\left(
    \frac{\partial^2 F_0}{\partial {a}\partial m} + 
    \frac{\partial^2 F_0}{\partial {a}^2}
    \right)
     \right).
  \end{aligned}
  \end{split}
\end{equation*}
}
\begin{NB}
{\allowdisplaybreaks
  \begin{equation*}
  \begin{split}
  & \lim_{\ve_1,\ve_2\to 0}
  \frac{\bZ_{c_1=0}(\ve_1,\ve_2,\vec{a},\vec{m};t;\Lambda)}
  {Z(\ve_1,\ve_2,\vec{a},\vec{m};\Lambda)}
\\
  =\;&
  {\sqrt{-1}}\Lambda   \exp \Biggl[
  \begin{aligned}[t]
    &
    - \frac18 \left(\frac{\partial^2 F_0}{\partial m^2}
    + 2\frac{\partial^2 F_0}{\partial a\partial m}
    + \frac{\partial^2 F_0}{\partial a^2}\right)
    - \eta\omega\left\{
    \frac1{4\pi\sqrt{-1}}\left(
    \frac{\partial^2 F_0}{\partial {a}\partial m} + 
    \frac{\partial^2 F_0}{\partial {a}^2}
    \right)
      \right\}^2
\\
   &\quad + \frac{t}{\bgamma}\left\{
      \frac12
      \left(
      \frac{\partial^2 F_0}{\partial\log\Lambda\partial m}
      - 2{m}\right)
      \right\}
    + \pi\sqrt{-1} \frac{t}\omega
    \\
    & \qquad
    + \frac{t\eta}{2\pi\sqrt{-1}}\left(
    \frac{\partial^2 F_0}{\partial {a}\partial m} + 
    \frac{\partial^2 F_0}{\partial {a}^2}
    \right)
    + \frac{t^2 u}6
    \Biggr]
\\
   & \qquad\qquad\times \sigma\left(
     t - 
       \frac\omega{4\pi\sqrt{-1}}\left(
    \frac{\partial^2 F_0}{\partial {a}\partial m} + 
    \frac{\partial^2 F_0}{\partial {a}^2}
    \right)
     \right).
  \end{aligned}
\\
  =\; &
    \frac{\sqrt{-1}\omega\Lambda}{\pi} \exp \Biggl[
  \begin{aligned}[t]
    &
    - \frac18 \left(\frac{\partial^2 F_0}{\partial m^2}
    + 2\frac{\partial^2 F_0}{\partial a\partial m}
    + \frac{\partial^2 F_0}{\partial a^2}\right)
    - \frac16\left\{
    \frac1{4\sqrt{-1}}\left(
    \frac{\partial^2 F_0}{\partial {a}\partial m} + 
    \frac{\partial^2 F_0}{\partial {a}^2}
    \right)
      \right\}^2
\\
   &\quad + \frac{t}{\bgamma}\left\{
      \frac12
      \left(
      \frac{\partial^2 F_0}{\partial\log\Lambda\partial m}
      - 2{m}\right)
      \right\}
    + \pi\sqrt{-1} \frac{t}\omega
    \\
    & \qquad
    + \frac{t}{2\pi\sqrt{-1}}\frac{\pi^2}{6\omega}\left(
    \frac{\partial^2 F_0}{\partial {a}\partial m} + 
    \frac{\partial^2 F_0}{\partial {a}^2}
    \right)
    + \frac{t^2 u}6
    \Biggr]
\\
   & \qquad\qquad\times \sin\left(
     \frac{\pi}\omega t -
       \frac1{4\sqrt{-1}}\left(
    \frac{\partial^2 F_0}{\partial {a}\partial m} + 
    \frac{\partial^2 F_0}{\partial {a}^2}
    \right)
     \right)
\\
   & \qquad\qquad\times \exp\left\{    
     \frac16
     \left(\frac{\pi}\omega t -
              \frac1{4\sqrt{-1}}\left(
    \frac{\partial^2 F_0}{\partial {a}\partial m} + 
    \frac{\partial^2 F_0}{\partial {a}^2}
    \right)\right)^2
     \right\}.
  \end{aligned}
\\
  =\; &
  \frac{\sqrt{-1}\omega\Lambda}{\pi} \exp    \Biggl[
  \begin{aligned}[t]
    &
    - \frac18 \left(\frac{\partial^2 F_0}{\partial m^2}
    + 2\frac{\partial^2 F_0}{\partial a\partial m}
    + \frac{\partial^2 F_0}{\partial a^2}\right)
\\
   &\quad + \frac{t}{\bgamma}\left\{
      \frac12
      \left(
      \frac{\partial^2 F_0}{\partial\log\Lambda\partial m}
      - 2{m}\right)
      \right\}
    + \pi\sqrt{-1} \frac{t}\omega
    + \frac{t^2}6\left(u + \left(\frac{\pi}\omega\right)^2\right)
    \Biggr]
\\
   & \qquad\qquad\times \sin\left(
     \frac{\pi}\omega t -
       \frac1{4\sqrt{-1}}\left(
    \frac{\partial^2 F_0}{\partial {a}\partial m} + 
    \frac{\partial^2 F_0}{\partial {a}^2}
    \right)
     \right)
  \end{aligned}
  \end{split}
\end{equation*}
}
\end{NB}

As before, we take the coefficients of $t^0$, $t^1$, $t^2$, compare with
\eqref{eq:vanish} and get
\begin{align}
  &
    1 =   \frac{\sqrt{-1}\omega\Lambda}{\pi} 
  \begin{aligned}[t]
    &
    \exp    \Biggl[
    - \frac18 \left(\frac{\partial^2 F_0}{\partial m^2}
    + 2\frac{\partial^2 F_0}{\partial a\partial m}
    + \frac{\partial^2 F_0}{\partial a^2}\right) \Biggr]
\\
   & \qquad\qquad\times \sin\left(-
       \frac1{4\sqrt{-1}}\left(
    \frac{\partial^2 F_0}{\partial {a}\partial m} + 
    \frac{\partial^2 F_0}{\partial {a}^2}
    \right)
     \right),
  \end{aligned} \label{eq:1}
\\
  &   
  0 = 
  \begin{aligned}[t]
  & \frac{1}{\bgamma}\left\{
      \frac12
      \left(
      \frac{\partial^2 F_0}{\partial\log\Lambda\partial m}
      - 2{m}\right)
      \right\}
   +  \frac{\pi\sqrt{-1}}\omega
\\
   & \qquad\qquad
   + \frac{\pi}\omega\cot\left(-
       \frac1{4\sqrt{-1}}\left(
    \frac{\partial^2 F_0}{\partial {a}\partial m} + 
    \frac{\partial^2 F_0}{\partial {a}^2}
    \right)
     \right),
  \end{aligned} \label{eq:2}
\\
  &
  0 = 
  \frac{1}3\left(u + \left(\frac{\pi}\omega\right)^2\right)
  - \left(\frac{\pi}\omega\right)^2\sin^{-2} \left(
    - \frac1{4\sqrt{-1}}\left(
    \frac{\partial^2 F_0}{\partial {a}\partial m} + 
    \frac{\partial^2 F_0}{\partial {a}^2}
    \right)
     \right). \label{eq:3}
\end{align}

\begin{NB}
\begin{equation*}
  \begin{split}
  & \frac{d}{dt}\log \sin(
     \frac{\pi}\omega t - \delta)
     = \frac{\pi}\omega \frac{\cos(\frac{\pi}\omega t - \delta)}
     {\sin(\frac{\pi}\omega t - \delta)}
     = \frac{\pi}\omega {\cot(\frac{\pi}\omega t - \delta)},
\\
  & \frac{d^2}{dt^2}\log \sin(
     \frac{\pi}\omega t - \delta)
     = - \left(\frac{\pi}\omega\right)^2
     \frac1{\sin^2(\frac{\pi}\omega t - \delta)}.
  \end{split}
\end{equation*}
\end{NB}

\subsection{Miscellaneous identities}
We assume $a=m$ hereafter, and solve equations (\ref{eq:1},
\ref{eq:2}, \ref{eq:3}) to write down various derivatives of
$F_0$ explicitly.

Since $e_1 - u/3 = 2/3 \left(\nicefrac{\pi}\omega\right)^2 - u/3$, $e_2 -
u/3 = e_3 - u/3 = - 1/3\left(\nicefrac{\pi}\omega\right)^2 - u/3$ is a
solution of $y^2 = 4x^2(x+u) + 4a\Lambda^3 x + \Lambda^6$, we have
\begin{equation*}
  4 \left(x + \frac{u}3 + \frac13\left(\frac{\pi}\omega\right)^2\right)^2
  \left(x + \frac{u}3 - \frac23\left(\frac{\pi}\omega\right)^2\right)
  = 4x^2(x+u) + 4a\Lambda^3 x + \Lambda^6.
\end{equation*}
Thus
\begin{align}
  & \left({u} + \left(\frac{\pi}\omega\right)^2\right)^2
  \left({u} - 2 \left(\frac{\pi}\omega\right)^2\right) 
  = \frac{27}4 \Lambda^6, \label{eq:4}
\\
  & \left({u} + \left(\frac{\pi}\omega\right)^2\right)
  \left({u} - \left(\frac{\pi}\omega\right)^2\right) 
  = 3a\Lambda^3. \label{eq:5}
\end{align}
This suggests the possibility to replace $a$ by $u -
\left(\nicefrac{\pi}\omega\right)^2$ or $u +
\left(\nicefrac{\pi}\omega\right)^2$. Therefore we write various
functions in terms of $u$ and $\nicefrac\pi\omega$ instead of $a$.
\begin{NB}
  It should be possible to deduce these directly from \eqref{eq:g at a=m}.
\end{NB}%
In fact, we will find that it is even more natural to introduce a
function $\T$ given by
\begin{equation*}
  \T \defeq \frac13\left(u + \left(\frac\pi\omega\right)^2\right)
  = \frac13 \left(u - \frac14 \left(\pd{u}a \right)^2\right).
\end{equation*}
Up to constant multiple, this is the {\it contact term\/} for surfaces
in the physics literature, say in \cite{MW,LNS}. 
It will give the contribution of the intersection number $(\alpha^2)$
in Donaldson invariants in view of our formula in
\thmref{thm:partition}, thanks to \eqref{eq:9} proved just below.

The perturbative parts of $u$ and $\nicefrac14 \left(\nicefrac{\partial
    u}{\partial a} \right)^2$ cancel out, so the perturbative part of
$\T$ is $0$. An explicit computation shows
\begin{equation}\label{eq:Texpand}
  \T = \frac{1}{2a}\Lambda^3 + O(\Lambda^6).
\end{equation}

\begin{NB}
Kota introduced a new function $\lambda$ by
\begin{equation*}
  \lambda\defeq \frac{\frac{27}{4}(\Lambda/a)^3-(9u/a^2-8)}{4-3u/a^2}.
\end{equation*}
From the formulas (5.55) and (5.56), we have
\begin{equation*}
  \frac{u}{a^2} + \left(\frac1{2a}\pd{u}a\right)^2
  = -\frac{2}{3}(\lambda+1)(\lambda-2) + 
  \frac{2}{3}\lambda(\lambda-2)
  = \frac23 (-\lambda + 2).
\end{equation*}
From \eqref{eq:5} we have
\begin{equation*}
   \frac{u}{a^2} + \left(\frac1{2a}\pd{u}a\right)^2
   = \frac{3\Lambda^3}a
   \left(u + \left(\frac\pi\omega\right)^2\right)^{-1}
   = \frac{\Lambda^3}{a\T}.
\end{equation*}
Therefore $\lambda$ is essentially $aT$:
\begin{equation*}
   \lambda = 2 - \frac32\frac{\Lambda^3}{a\T}.
\end{equation*}
I do not know what is really `good' variable to work with, yet......
\end{NB}

By \eqref{eq:d^2F} together with $E_2(\tau) = 1$ when $a=m$, we have
\begin{equation}\label{eq:9}
  \frac{\partial^2 F_0}{\partial(\log\Lambda)^2}
  = -3\left( u + \left(\frac{\pi}\omega\right)^2\right)
  = -9\T.
\end{equation}
\begin{NB}
  \begin{equation*}
    = -\bgamma\pd{u}{\log\Lambda}
  \end{equation*}
\end{NB}

Since $\Delta$ vanishes at $a=m$, we have
\(
  \pd{\Delta}{a} + \pd{\Delta}{m} = 0.
\)
Therefore we get
\begin{equation*}
  0 = \left(3u^2 -2a^2 u - \frac92 a\Lambda^3\right)
  \left(\pd{u}a + \pd{u}m\right)
  - 2u^2 a - \frac92 u\Lambda^3 + 12a^2\Lambda^3
\end{equation*}
from \eqref{eq:disc}.
Using (\ref{eq:4}, \ref{eq:5}), we find
\begin{equation}\label{eq:10}
  \begin{split}
  - 2u^2 a - \frac92 u\Lambda^3 + 12a^2\Lambda^3  
  &= \frac4{\Lambda^3}
 \left(\frac\pi\omega\right)^6 \T,
\\
  3u^2 -2a^2 u - \frac92 a\Lambda^3
  &= -\frac4{\Lambda^6}
  \left(\frac\pi\omega\right)^6 \T^2.
\end{split}
\end{equation}
\begin{NB}
We have
\begin{equation*}
  \begin{split}
  & - 2u^2 a - \frac92 u\Lambda^3 + 12a^2\Lambda^3
  \overset{\eqref{eq:5}}{=} -\frac{2}{3\Lambda^3}u^2 \left(
    u^2 - \left(\frac\pi\omega\right)^4
  \right)
  - \frac92 u\Lambda^3
  + 12 \Lambda^3 
  \frac{1}{9\Lambda^6} \left(
    u^2 - \left(\frac\pi\omega\right)^4
  \right)^2
\\
  =\; &  \frac1{3\Lambda^3}
  \left[
    -2 u^2\left(u^2 - \left(\frac\pi\omega\right)^4\right)
    + 4 \left(u^2 - \left(\frac\pi\omega\right)^4\right)^2
    - \frac{27}2u\Lambda^6
    \right]
\\
  \overset{\eqref{eq:4}}{=}\; &  \frac2{3\Lambda^3}
  \left[
    - u^2\left(u^2 - \left(\frac\pi\omega\right)^4\right)
    + 2 \left(u^2 - \left(\frac\pi\omega\right)^4\right)^2
    - u\left(u + \left(\frac\pi\omega\right)^2\right)^2
    \left(u - 2\left(\frac\pi\omega\right)^2\right)
    \right]
\\
  =\; &  \frac4{3\Lambda^3}
 \left(\frac\pi\omega\right)^6
 \left(u + \left(\frac\pi\omega\right)^2\right).
  \end{split}
\end{equation*}

We have
\begin{equation*}
  \begin{split}
  & 3u^2 -2a^2 u - \frac92 a\Lambda^3
  \overset{\eqref{eq:5}}{=} 
  3u^2 - 2u\frac{\left(u^2 - \left(\nicefrac\pi\omega\right)^4\right)^2}
  {9\Lambda^6}
  -\frac32 \left(u^2 - \left(\nicefrac\pi\omega\right)^4\right)
\\
  =\; &
  \frac1{9\Lambda^6}\left[
  \frac{27}2 \Lambda^6 \left(u^2 + \left(\frac\pi\omega\right)^4\right)
  - 2u^5 + 4u^3 \left(\frac\pi\omega\right)^4
  - 2u \left(\frac\pi\omega\right)^8
  \right]
\\
  \overset{\eqref{eq:4}}{=} \; &
  \frac2{9\Lambda^6}\left[
   \left(u + \left(\frac\pi\omega\right)^2\right)^2
   \left(u - 2\left(\frac\pi\omega\right)^2\right)
   \left(u^2 + \left(\frac\pi\omega\right)^4\right)
  - u^5 + 2u^3 \left(\frac\pi\omega\right)^4
  - u \left(\frac\pi\omega\right)^8
  \right]
\\
  {=} \; &
  \frac2{9\Lambda^6}\left[
    - 2 u^2\left(\frac\pi\omega\right)^6
    - 4 u \left(\frac\pi\omega\right)^8
    - 2 \left(\frac\pi\omega\right)^{10}
  \right]
\\
  {=} \; &
  -\frac4{9\Lambda^6}
  \left(\frac\pi\omega\right)^6
  \left(u + \left(\frac\pi\omega\right)^2\right)^2.
  \end{split}
\end{equation*}
\end{NB}%
Therefore
\begin{equation}\label{eq:7}
    -\frac{1}{\bgamma}
      \left(
      \frac{\partial^2 F_0}{\partial\log\Lambda\partial m}
      - 2{m}\right)
   -  \frac{2\pi\sqrt{-1}}\omega
   =
    \pd{u}a + \pd{u}m
    = \Lambda^3 \T^{-1}
\end{equation}
\begin{NB}
  Or, by using \eqref{eq:5}
  \begin{equation*}
    \pd{u}a + \pd{u}m
    = \frac1a
    \left({u} - \left(\frac{\pi}\omega\right)^2\right).
  \end{equation*}
This is nothing but Kota's equation (4.36)
originally deduced from the contact term equation and the homogeneity.

Or this is equal to
\begin{equation*}
  \frac{\sqrt{3}}2 \sqrt{u - 2\left(\frac\pi\omega\right)^2}.
\end{equation*}
by \eqref{eq:4}.
\end{NB}

\begin{NB}
  Another way to prove \eqref{eq:7}:

By \eqref{eq:2} and \eqref{eq:3} we have
\begin{equation*}
  \left(\frac{1}{\bgamma}\left\{
      \frac12
      \left(
      \frac{\partial^2 F_0}{\partial\log\Lambda\partial m}
      - 2{m}\right)
      \right\}
   +  \frac{\pi\sqrt{-1}}\omega \right)^2
   = \frac13\left(u - 2\left(\frac\pi\omega\right)^2\right).
\end{equation*}
The left hand side is
\(
  \frac14 \left(
    \pd{u}a + \pd{u}m
    \right)^2.
\)
The right hand side is 
\(
  \nicefrac94 \Lambda^6 \left( 
    {u} + \left(\nicefrac{\pi}\omega\right)^2
  \right)^{-2}
\)
by \eqref{eq:4}. Further using \eqref{eq:5} we find
\begin{equation*}
  \pm \left(
    \pd{u}a + \pd{u}m
    \right)
    = \frac1a \left(
      u - \left(\frac\pi\omega\right)^2
    \right).
\end{equation*}
By comparing perturbative parts, we find that we must take $+$.
\end{NB}%

Plugging \eqref{eq:7} to \eqref{eq:2}, we obtain
\begin{equation*}
  \frac{\pi}\omega
  \cot\left(-
       \frac1{4\sqrt{-1}}\left(
    \frac{\partial^2 F_0}{\partial {a}\partial m} + 
    \frac{\partial^2 F_0}{\partial {a}^2}
    \right)
     \right)
     = \frac12\Lambda^3 \T^{-1}.
\end{equation*}
The left hand side is
\begin{equation*}
  \frac{\pi\sqrt{-1}}\omega
  \frac{
        \exp\left[-\frac12\left(
    \frac{\partial^2 F_0}{\partial {a}\partial m} + 
    \frac{\partial^2 F_0}{\partial {a}^2}
    \right)\right]
  + 1
    }{
        \exp\left[-\frac12\left(
    \frac{\partial^2 F_0}{\partial {a}\partial m} + 
    \frac{\partial^2 F_0}{\partial {a}^2}
    \right)\right]
  - 1
    }.
\end{equation*}
\begin{NB}
\begin{equation*}
  \begin{split}
  & \frac{\pi}\omega
  \frac{
    \left(\exp\left[-\frac14\left(
    \frac{\partial^2 F_0}{\partial {a}\partial m} + 
    \frac{\partial^2 F_0}{\partial {a}^2}
    \right)\right]
  +
    \exp\left[\frac14\left(
    \frac{\partial^2 F_0}{\partial {a}\partial m} + 
    \frac{\partial^2 F_0}{\partial {a}^2}
    \right)\right]\right)/2
  }{
    \left(\exp\left[-\frac14\left(
    \frac{\partial^2 F_0}{\partial {a}\partial m} + 
    \frac{\partial^2 F_0}{\partial {a}^2}
    \right)\right]
  -
    \exp\left[\frac14\left(
    \frac{\partial^2 F_0}{\partial {a}\partial m} + 
    \frac{\partial^2 F_0}{\partial {a}^2}
    \right)\right]\right)/2\sqrt{-1}
  }
\\
  =\; & \frac{\pi\sqrt{-1}}\omega
  \frac{
        \exp\left[-\frac12\left(
    \frac{\partial^2 F_0}{\partial {a}\partial m} + 
    \frac{\partial^2 F_0}{\partial {a}^2}
    \right)\right]
  + 1
    }{
        \exp\left[-\frac12\left(
    \frac{\partial^2 F_0}{\partial {a}\partial m} + 
    \frac{\partial^2 F_0}{\partial {a}^2}
    \right)\right]
  - 1
    }.
  \end{split}
\end{equation*}
\end{NB}%
Hence
\begin{multline}
\label{eq:8}
    \exp\left[-\frac12\left(
    \frac{\partial^2 F_0}{\partial {a}\partial m} + 
    \frac{\partial^2 F_0}{\partial {a}^2}
    \right)\right]
\\
  = - \left(\frac{2\pi\sqrt{-1}}\omega + \Lambda^3
    \T^{-1}
  \right)
  \Biggm/
  \left(\frac{2\pi\sqrt{-1}}\omega - \Lambda^3\T^{-1}
  \right)
\\
  = \frac14 T^{-1}
  \left(\frac{2\pi\sqrt{-1}}\omega + \Lambda^3
    \T^{-1}
  \right)^2,
\end{multline}
where we have used \eqref{eq:4} in the last equality.
\begin{NB}
  We have
\begin{equation*}
    \left(\frac{2\pi\sqrt{-1}}\omega\right)^2 - \Lambda^6 \T^{-2}
    = - 4\left(\frac\pi\omega\right)^2
    -\frac43 \left(u - 2\left(\frac\pi\omega\right)^2\right)
    = -\frac43 \left(u + \left(\frac\pi\omega\right)^2\right)
    = - 4 \T
\end{equation*}
by \eqref{eq:4}.
\end{NB}%

\begin{NB}
  From \eqref{eq:5}, the middle line is equal to
  \begin{equation*}
    - \left(\frac{\pi\sqrt{-1}}\omega + \frac1{2a}
    \left({u} - \left(\frac{\pi}\omega\right)^2\right)\right)
  \Biggm/
  \left(\frac{\pi\sqrt{-1}}\omega - \frac1{2a}
    \left({u} - \left(\frac{\pi}\omega\right)^2\right)\right).
  \end{equation*}
Or
  \begin{equation*}
    -\left(-\frac12 \pd{u}a + \frac12\left(\pd{u}a + \pd{u}m\right)\right)
      \Biggm/
    \left(-\frac12 \pd{u}a - \frac12\left(\pd{u}a - \pd{u}m\right)\right)
    = 
    \frac12 \pd{u}m
      \Biggm/
    \left(\pd{u}a + \frac12 \pd{u}m\right).
  \end{equation*}
This is consistent with Kota's (4.45).
\end{NB}

\begin{NB}
The following seems unnecessary.

From \eqref{eq:7} we have
\begin{equation*}
   \pd{u}m = 3\Lambda^3
   \left({u} + \left(\frac{\pi}\omega\right)^2\right)^{-1}
   + \frac{\sqrt{-1}\pi}{\omega}.
\end{equation*}
\end{NB}

By \eqref{eq:1} and \eqref{eq:3} we have
\begin{equation}\label{eq:6}
      \exp \Biggl[
    - \frac14 \left(\frac{\partial^2 F_0}{\partial m^2}
    + 2\frac{\partial^2 F_0}{\partial a\partial m}
    + \frac{\partial^2 F_0}{\partial a^2}\right) \Biggr]
  = - \frac1{\Lambda^2} \T.
\end{equation}

By \eqref{eq:8} and \eqref{eq:6} we obtain
\begin{equation}
\label{eq:11}
      \exp \Biggl[
    - \frac12 \left(\frac{\partial^2 F_0}{\partial m^2}
    + \frac{\partial^2 F_0}{\partial a\partial m}
     \right) \Biggr]
\begin{NB}
  = - \frac{\T^2}{\Lambda^4}\,
  \frac{
      \frac{2\pi\sqrt{-1}}\omega - \Lambda^3 \T^{-1}
   }
   {
       \frac{2\pi\sqrt{-1}}\omega + \Lambda^3 \T^{-1}
   }
\end{NB}%
  = \frac{4\T^3}{\Lambda^4}\,
  \left(
       \frac{2\pi\sqrt{-1}}\omega + \Lambda^3 \T^{-1}
  \right)^{-2}.
\end{equation}

\subsection{Computation of instanton parts}\label{subsec:inst}

Since we will express Mochizuki's formula in terms of {\it instanton
  parts\/} of derivatives of $F_0$ and $A$, $B$, we need to compute
them. Since their perturbative parts are explicit functions, we
just subtract them from the full partition functions. We denote
instanton parts by putting `inst' as sub/superscripts.

\begin{NB}
  The coefficient of $x$:
\end{NB}
We have
\begin{equation}
  \frac1\bgamma\pd{\Fin_0}{\log\Lambda}
  = \frac1\bgamma\left(\pd{F_0}{\log\Lambda} + 2a^2\right)
  = - u + a^2
\end{equation}
from the perturbative part of $\nicefrac{\partial F_0}{\log\Lambda}$
and the definition of $u$ in \eqref{eq:u}.
\begin{NB}
  This $a^2$ cancels with $-s^2$ in (4.12).
\end{NB}%

\begin{NB}
  The coefficient of $((\xi_2-\xi_1)^2)$:
\end{NB}
Since
\(
  \exp \left[
    -\nicefrac14 \left(\nicefrac{\partial^2 F_0}{\partial m^2}
    + 2\nicefrac{\partial^2 F_0}{\partial a\partial m}
    + \nicefrac{\partial^2 F_0}{\partial a^2}\right) \right]
\)
has
\(
  -\left(
    \nicefrac{2a}\Lambda
  \right)^{-1}
\)
as the perturbative part, we get
\begin{equation}\label{eq:xi^2}
  \exp \Biggl[
    -\frac14 \left(\frac{\partial^2 \Fin_0}{\partial m^2}
    + 2\frac{\partial^2 \Fin_0}{\partial a\partial m}
    + \frac{\partial^2 \Fin_0}{\partial a^2}\right) \Biggr]
  = 
    \frac{2a}{\Lambda^{3}}
    \T
\end{equation}
from \eqref{eq:6}.
\begin{NB}
  We have
  \begin{equation*}
    \exp\left[
    -\frac{1}{4}\left(\frac{\partial^2 F_0^{\mathrm{pert}}}{\partial a_2^2}
   + 2\frac{\partial^2 F_0^{\mathrm{pert}}}{\partial a \partial m}
   + \frac{\partial^2 F_0^{\mathrm{pert}}}{\partial m^2}\right)\right]
  = \exp\left[
  - 2 \log\frac{-2\sqrt{-1}a}{\Lambda}
  + \log\left(
    \frac{a+m}\Lambda
  \right)\right]
   =
  \left(\frac{-2\sqrt{-1}a}{\Lambda}\right)^{-2}
  (\frac{a+m}\Lambda)
  \end{equation*}
If we substitute $m=a$, we get
\begin{equation*}
  -\left(\frac{2a}\Lambda\right)^{-1}.
\end{equation*}
\end{NB}%
Note that the left hand side starts with $1$ as a formal power series
in $\Lambda$. This is compatible with the expansion of the right hand
side in \eqref{eq:Texpand}.
\begin{NB}
  Since $\T = \nicefrac1{2a}\Lambda^3 + \cdots$, we have
$\nicefrac{2a}{\Lambda^3} T = 1 + \cdots$.
\end{NB}%

In the same way, we get
\begin{equation}\label{eq:12}
  \exp\left[
    -\frac12\left(\frac{\partial^2 \Fin_0}{\partial a\partial m}
  + \frac{\partial^2 \Fin_0}{\partial a^2}
   \right)
  \right]
   \begin{NB}
  =
  - \left(\frac{2a}\Lambda\right)^{3}
  \frac{
      \frac{2\pi\sqrt{-1}}\omega + \Lambda^3\T^{-1}
   }
   {
       \frac{2\pi\sqrt{-1}}\omega - \Lambda^3 \T^{-1}
   }
   \end{NB}%
  = \frac14 \left(\frac{2a}\Lambda\right)^{3} T^{-1}
  \left(\frac{2\pi\sqrt{-1}}\omega + \Lambda^3
    \T^{-1}
  \right)^2
\end{equation}
\begin{NB}
The perturbative part is given by
  \begin{equation*}
    \exp\left[
      -\frac12\left(\frac{\partial^2 F_0^{\mathrm{pert}}}{\partial a\partial m}
    + \frac{\partial^2 F_0^{\mathrm{pert}}}{\partial a^2}
     \right)
    \right]
    =  \left(\frac{-2\sqrt{-1}a}\Lambda\right)^{-4}
    \left(\frac{2a}\Lambda\right)
    = \left(\frac{2a}\Lambda\right)^{-3}.
  \end{equation*}
\end{NB}%
\begin{NB}
  Since the left hand side starts with $1$, we must have
  \begin{equation*}
    \left(\frac{2\pi\sqrt{-1}}\omega + \Lambda^3
    \T^{-1}
  \right)
  = \frac{2\Lambda^3}{(2a)^2} + \cdots.
  \end{equation*}
\end{NB}%
from \eqref{eq:8}, and
\begin{equation}
\label{eq:13}
  \exp\left[
    -\frac12\left(\frac{\partial^2 \Fin_0}{\partial m^2}
  + \frac{\partial^2 \Fin_0}{\partial a\partial m}
   \right)
  \right]
\begin{NB}
  =
  - \frac1{2\Lambda^3 a}
  \T^2\,
  \frac{
      \frac{2\pi\sqrt{-1}}\omega - \Lambda^3 \T^{-1}
   }
   {
       \frac{2\pi\sqrt{-1}}\omega + \Lambda^3 \T^{-1}
   }
\end{NB}%
  = \frac{2\T^3}{\Lambda^3 a}
    \left(
       \frac{2\pi\sqrt{-1}}\omega + \Lambda^3 \T^{-1}
  \right)^{-2}
\end{equation}
\begin{NB}
The perturbative part is given by
  \begin{equation*}
    \exp\left[
      -\frac12\left(\frac{\partial^2 F_0^{\mathrm{pert}}}{\partial m^2}
    + \frac{\partial^2 F_0^{\mathrm{pert}}}{\partial a\partial m}
     \right)
    \right]
    =  \left(\frac{2a}\Lambda\right).
  \end{equation*}
\end{NB}%
\begin{NB}
\begin{equation*}
    \frac{2\T^3}{\Lambda^3 a}
  \left(
     \frac{2\pi\sqrt{-1}}\omega + \Lambda^3 \T^{-1}
\right)^{-2}
  = \frac{2}{\Lambda^3 a} \left( \frac{\Lambda^3}{2a} + \cdots \right)^2
  \left( \frac{(2a)^2}{2\Lambda^3} + \cdots \right)^2
  = 1 + \cdots.
\end{equation*}
\end{NB}%
from \eqref{eq:11}.

We need a trick to compute the instanton part of
$q = \exp\left(-\nicefrac12\nicefrac{\partial^2 F_0}{\partial a^2}\right)$, 
since it vanishes at $a=m$.
For the moment we no longer set $a = m$ and consider
\begin{equation*}
  \begin{split}
  q_{\mathrm{inst}}^2 &=
  \exp\left(- \frac{\partial^2 \Fin_0}{\partial a^2}\right)
   = q^2
  \left(\frac{-2\sqrt{-1}a}{\Lambda}\right)^{8}
  \left(\frac{(m+a)(m-a)}{\Lambda^2}\right)^{-1}
\\
  &= \frac{q^2}{m-a}
  \left(\frac{-2\sqrt{-1}a}{\Lambda}\right)^{8}
  \left(\frac{m+a}{\Lambda^2}\right)^{-1}.
  \end{split}
\end{equation*}
\begin{NB}
The perturbative part of $\nicefrac{\partial^2
  F_0}{\partial a^2}$ is
\(
8 \log\nicefrac{-2\sqrt{-1}a}{\Lambda}
  - \log\nicefrac{(a+m)(-a + m)}{\Lambda^2}.
\)
\end{NB}
Since $q$ vanishes at $a=m$, we get
\begin{equation*}
  \left.q_{\mathrm{inst}}^2 \right|_{m=a} =
  - \Lambda
  \left.\pd{(q^2)}a \right|_{m=a}
  \left(\frac{2a}{\Lambda}\right)^{7}.
\end{equation*}
\begin{NB}
  $\deg\Lambda = 1$, $\deg\pd{(q^2)}a = -1$.
\end{NB}%
The discriminant $\Delta$ has an expansion 
\(
   \omega^{12} \Delta = (2\pi)^{12}(q^2 - 24 q^4 + \cdots),
\)
\begin{NB}
  The degree of $\Delta$ is $12$. The degree of $\omega$ is $-1$.
\end{NB}%
so
\begin{equation*}
  \omega^{12} \left.\pd{\Delta}{a} \right|_{m=a}
  = \left.\pd{}{a}(\omega^{12}\Delta) \right|_{m=a}
  =  (2\pi)^{12} \left.\pd{(q^2)}a \right|_{m=a}.
\end{equation*}
We differentiate \eqref{eq:disc} by $a$ to get
\begin{equation}\label{eq:derivDelta}
  \left.\pd{\Delta}a\right|_{m=a}
  = -16\Lambda^6
  \left( 3 u^2 - 2 a^2 u - \frac92 a\Lambda^3\right)\pd{u}a
  = -{\sqrt{-1}} \left(\frac{2\pi}\omega\right)^7
  \T^2,
\end{equation}
where we have used \eqref{eq:10}. Therefore
\begin{equation}\label{eq:qinst}
  q_{\mathrm{inst}}^2 =
  \exp\left(- \frac{\partial^2 \Fin_0}{\partial a^2}\right)
  = \Lambda
  {\sqrt{-1}} 
  \left(\frac{2\pi}\omega\right)^{-5}
  \left(\frac{2a}{\Lambda}\right)^{7}
  \T^2.
\end{equation}
\begin{NB}
  $\deg T = 2$, $\deg \pi/\omega = 1$, $\deg\Lambda = 1$. $\deg
  q_{\mathrm{inst}} = 0$. Therefore the degree matches.

  Let us compute the first two terms:
  \begin{equation*}
      q_{\mathrm{inst}}^2 =
      \Lambda \sqrt{-1} (\sqrt{-1}\cdot 2a + \cdots)^{-5}
        \left(\frac{2a}{\Lambda}\right)^{7}
        \left(\frac{\Lambda^3}{2a}+\cdots \right)^2
      = -1 + \cdots.
  \end{equation*}
There was a sign mistake in \eqref{eq:10}. Corrected on Dec.\ 9.
\end{NB}

It is to be understood that all functions are evaluated at $a=m$
unless an equation contains an expression `$m-a$'.

Substituting \eqref{eq:qinst} into \eqref{eq:12} we get
\begin{equation*}
  \exp\left(- \frac{\partial^2 \Fin_0}{\partial a\partial m}\right)
  =
  \frac{1}{\sqrt{-1}\Lambda}\T^{-4}
  \left(\frac{2a}\Lambda\right)^{-1}
   \left(\frac{2\pi}\omega\right)^{5}
   \left(\frac{\pi\sqrt{-1}}\omega + \frac{\Lambda^3}{2\T}
  \right)^{4}.
\end{equation*}
\begin{NB}
Original version (it was wrong):
  \begin{equation*}
  = 
  \frac1{\sqrt{-1}\Lambda} 
  \left(\frac{2a}\Lambda\right)^{-1}
  \left(
  \frac{
      \frac{2\pi\sqrt{-1}}\omega - \Lambda^3 \T^{-1}
   }
   {
       \frac{2\pi\sqrt{-1}}\omega + \Lambda^3 \T^{-1}
   }
   \right)^2
   \left(\frac{2\pi}\omega\right)^{5}
  \T^{-2}
  \end{equation*}
\end{NB}%
\begin{NB}
  \begin{equation*}
    \begin{split}
  &  \frac{1}{16\sqrt{-1}\Lambda}\T^{-4}
  \left(\frac{2a}\Lambda\right)^{-1}
   \left(\frac{2\pi}\omega\right)^{5}
   \left(\frac{2\pi\sqrt{-1}}\omega + \Lambda^3
    \T^{-1}
  \right)^{4}
\\
  =\;&
  \frac{1}{16\sqrt{-1}\Lambda}\left(\frac{2a}{\Lambda^3} + \cdots \right)^{4}
  \left(\frac{2a}\Lambda\right)^{-1}
   \left({2\sqrt{-1}a} + \cdots \right)^{5}
   \left(\frac{2\Lambda^3}{(2a)^2} + \cdots \right)^{4}
   = 1 + \cdots.
    \end{split}
  \end{equation*}
\end{NB}%
Then we substitute this into \eqref{eq:13} to get
\begin{equation}\label{eq:d^2Fdm^2}
    \exp\left(- \frac{\partial^2 \Fin_0}{\partial m^2}\right)
  = 
  \frac{\sqrt{-1}}{\Lambda^7}
  \T^{10}
  \left(\frac{2a}\Lambda\right)^{-1}
   \left(\frac{2\pi}\omega\right)^{-5}
      \left(\frac{\pi\sqrt{-1}}\omega + \frac{\Lambda^3}{2 \T}
  \right)^{-8}
  .
\end{equation}
\begin{NB}
Original version (it was wrong):
\begin{equation*}
    \frac{\sqrt{-1}}{\Lambda^7}
  \left(\frac{2a}\Lambda\right)^{-1}
   \left(\frac{2\pi}\omega\right)^{-5}
  \T^{6}.
\end{equation*}
\begin{NB2}
  \begin{equation*}
  \left[
  - \frac1{2\Lambda^3 a}
  \T^2\,
  \frac{
      \frac{2\pi\sqrt{-1}}\omega - \Lambda^3 \T^{-1}
   }
   {
       \frac{2\pi\sqrt{-1}}\omega + \Lambda^3 \T^{-1}
   }
   \right]^2
   \times
  {\sqrt{-1}}\Lambda
  \left(\frac{2a}\Lambda\right)
  \left(
  \frac{
      \frac{2\pi\sqrt{-1}}\omega - \Lambda^3 \T^{-1}
   }
   {
       \frac{2\pi\sqrt{-1}}\omega + \Lambda^3 \T^{-1}
   }
   \right)^{-2}
   \left(\frac{2\pi}\omega\right)^{-5}
  \T^{2}.
  \end{equation*}
\end{NB2}
\end{NB}%

Let us consider instanton parts of other derivatives: we have
\begin{NB}
  The coefficient of $(\xi_2-\xi_1,\alpha)$:
\end{NB}%
\begin{equation}\label{eq:d^2FdadL}
  \frac{\partial^2 \Fin_0}{\partial a\partial\log\Lambda}
  + \frac{\partial^2 \Fin_0}{\partial m\partial\log\Lambda}
  = 6a - 3\Lambda^3\T^{-1}
\end{equation}
\begin{NB}
  \begin{equation*}
    = 3\left(\pd{}{a} + \pd{}{m}\right)\left(a^2 - u\right)
    = 6a - 3\left( \pd{u}a + \pd{u}m
    \right)
  \end{equation*}
\end{NB}%
from the definition \eqref{eq:u} of $u$ and \eqref{eq:7}.
\begin{NB}
  This $6a$, after dividing out by $6$, will be canceled with
  $-(\xi_2-\xi_1,\alpha)sz$ in (4.12). And the term
  $-9\Lambda^3 T^{-1}$ will vanish at $a=\infty$ after the substitution
  $\Lambda = \Lambda^{4/3}a^{-1/3}$.
\end{NB}
We also have
\begin{NB}
  the coefficient of $(\tilde\xi,\alpha)z$:
\end{NB}%
\begin{equation}\label{eq:d^2FdmdL}
  \frac{\partial^2 \Fin_0}{\partial m \partial\log\Lambda}
  = -3 \left(\Lambda^3 \T^{-1}
  + \frac{2\pi\sqrt{-1}}\omega\right),
\end{equation}
from the definition of $\omega$ \eqref{eq:duda} and its perturbative
part.
\begin{NB}
  \begin{equation*}
    \mathrm{LHS}
    = 6a - 3\Lambda^3\T^{-1}
  - \frac{\partial^2 \Fin_0}{\partial a\partial\log\Lambda}
  = - 3\Lambda^3 \T^{-1}
  - \frac{\partial^2 F_0}{\partial a\partial\log\Lambda}
  \end{equation*}
\end{NB}

\begin{NB}
  The coefficient of $(\alpha^2)z^2$ is $\nicefrac{\partial^2
    \Fin_0}{\partial(\log\Lambda)^2}$, but this does not have the
  perturbative part, and hence \eqref{eq:9} is enough.
\end{NB}

\begin{NB}
  Part of the coefficient of $\chi(X)$:
\end{NB}
From \eqref{eq:genus1} we have
\begin{equation}\label{eq:Ain}
  \exp\Ain = \left( \frac1{2a} \pd{u}a \right)^{1/2}
  = \left(\frac1{\sqrt{-1}a}\frac{\pi}\omega\right)^{1/2}.
\end{equation}
\begin{NB}
Since the perturbative part
\(
  A^{\mathrm{pert}}
\)
of $A$ is
\(
   \nicefrac12 \log\left( \nicefrac{-2\sqrt{-1}a}\Lambda\right).
\)
\end{NB}%
\begin{NB}
  Since $\nicefrac\pi\omega = \sqrt{-1}a+\cdots$, $\exp\Ain = 1 + \cdots$.
\end{NB}%

In order to compute the instanton part of $B$, we use the same
technique as for $q$, since $B$ also vanishes at $a=m$. The
perturbative part
\begin{NB}
\(
  B^{\mathrm{pert}}
\)
\end{NB}
of $B$ is
\(
   \nicefrac12 \log\left( \nicefrac{-2\sqrt{-1}a}\Lambda\right)
   + \nicefrac18 \log\left(\nicefrac{(m-a)(m+a)}{\Lambda^2}\right).
\)
Thus we have
\begin{equation*}
  \exp \Bin
  = \sqrt{-\pi}\Lambda^{-3/2}
    \left(\frac{\Delta}{m-a}\right)^{1/8}
    \left(\frac{-2\sqrt{-1}a}\Lambda\right)^{-1/2}
    \left(\frac{m+a}{\Lambda^2}\right)^{-1/8}
\end{equation*}
from \eqref{eq:genus1}.
Therefore at $m=a$, we have
\begin{equation*}
  \exp8\Bin
  = \Lambda^{-11}  \left( - \pd{\Delta}{a}\right)
    \left(\frac{2a}\Lambda\right)^{-5}.
\end{equation*}
Using \eqref{eq:derivDelta}, we get
\begin{equation}\label{eq:Bin}
  \exp 8\Bin = 
  {\sqrt{-1}}
  \Lambda^{-11} 
  \left(\frac{2\pi}\omega\right)^7
  \left(\frac{2a}\Lambda\right)^{-5}
  \T^2.
\end{equation}
\begin{NB}
  We have
  \begin{equation*}
  {\sqrt{-1}}
  \Lambda^{-11} 
  \left(\frac{2\pi}\omega\right)^7
  \left(\frac{2a}\Lambda\right)^{-5}
  \T^2
  =  {\sqrt{-1}}
  \Lambda^{-11} 
    \left(2\sqrt{-1} a + \cdots\right)^7
  \left(\frac{2a}\Lambda\right)^{-5}
  \left(\frac{\Lambda^3}{2a}+\cdots\right)^2
  = 1 + \cdots.
  \end{equation*}
\end{NB}%

\begin{NB}
\begin{equation*}
  B^{\mathrm{pert}} + \frac18 \frac{\partial^2 F_0^{\mathrm{pert}}}{\partial a^2}
  = \frac32 \log\frac{-2\sqrt{-1}a}\Lambda.
\end{equation*}
Therefore
\begin{equation*}
  \exp(B)\exp\left(\frac18 \frac{\partial^2 F_0}{\partial a^2}\right)
  = 
  \exp(\Bin)\exp\left(\frac18 \frac{\partial^2 \Fin_0}{\partial a^2}\right)
  \left(\frac{-2\sqrt{-1}a}\Lambda\right)^{3/2}.
\end{equation*}
\end{NB}%

\begin{NB}
  We have
  \begin{equation*}
    \begin{split}
   & \exp\left(8\Bin + \frac{\partial^2 \Fin_0}{\partial a^2}\right)
    = {\sqrt{-1}}
  \Lambda^{-11} 
  \left(\frac{2\pi}\omega\right)^7
  \left(\frac{2a}\Lambda\right)^{-5}
  \T^2 \times
  \Lambda^{-1}
  \frac1{\sqrt{-1}} 
  \left(\frac{2\pi}\omega\right)^{5}
  \left(\frac{2a}{\Lambda}\right)^{-7}
  \T^{-2}
\\  
  =\; &
  \left(\frac{\pi}{a\omega}\right)^{12}.
    \end{split}
  \end{equation*}
This is the same as Kota's (5.37).
\end{NB}%

\begin{NB}
Combining (\ref{eq:d^2Fdm^2}, \ref{eq:Ain}, \ref{eq:Bin}) all
together, we get
\begin{equation*}
  \begin{split}
  \exp \left[8(\Bin - \Ain) + \frac{\partial^2\Fin_0}{\partial m^2}\right]
  & = \T^{-8} \left(\frac{2\pi}\omega\right)^{8}
  \left(
    \frac{\pi\sqrt{-1}}\omega + \frac{\Lambda^3}{2\T}
  \right)^8
\\
  & = \left(\frac{2\pi}\omega\right)^{8}
  \left(
    \frac{\pi\sqrt{-1}}\omega - \frac{\Lambda^3}{2 \T}
  \right)^{-8},
  \end{split}
\end{equation*}
where we have used \eqref{eq:4} in the second equality. Computing the
leading term of the right hand side, we deduce
\begin{equation}\label{eq:BA}
  \begin{split}
  \exp \left[\Bin - \Ain + \frac18\frac{\partial^2\Fin_0}{\partial m^2}\right]
  & = \sqrt{-1} \T^{-1} \left(\frac{2\pi}\omega\right)
  \left(
    \frac{\pi\sqrt{-1}}\omega + \frac{\Lambda^3}{2\T}
  \right)
\\
  & = \sqrt{-1} \left(\frac{2\pi}\omega\right)
  \left(
    \frac{\pi\sqrt{-1}}\omega - \frac{\Lambda^3}{2 \T}
  \right)^{-1},
  \end{split}
\end{equation}
as the left hand side starts with $1$ as a formal power series in $\Lambda$.
\begin{NB2}
  We have
  \begin{equation*}
   \frac{2\pi}\omega = 2\sqrt{-1}a + \cdots, \quad
   \frac{\pi\sqrt{-1}}\omega = - a + \cdots, \quad
   \frac{\Lambda^3}{2\T} = a + \cdots.
  \end{equation*}
\end{NB2}%
\begin{NB2}
  The sign mistake is corrected. Dec.~21.
\end{NB2}

\begin{NB2}
Alternative way:  

From \eqref{eq:AB} we have
\begin{equation*}
  \exp (8(\Bin - \Ain))
  = \left(\frac{\theta'_{11}(0)}{\omega\Lambda}\right)^8 
    \left(\frac{(m-a)(m+a)}{\Lambda^2}\right)^{-1}.
\end{equation*}
We have
\begin{equation*}
  \left.\frac{(\theta'_{11}(0))^8}{m-a}\right|_{m=a}
  = (2\pi)^8 \left.\pd{(q^2)}{a}\right|_{m=a}
\end{equation*}
as $\theta'_{11}(0) = -2\pi q^{1/4} + \dots$.
Using \eqref{eq:derivDelta} and the equation one line above, we have
\begin{equation*}
  \left.\pd{(q^2)}{a}\right|_{m=a}
  = \sqrt{-1} \left(\frac{2\pi}\omega\right)^{-5} T^2.
\end{equation*}
Hence
\begin{equation*}
  \exp (8(\Bin - \Ain))
  = \sqrt{-1}\Lambda^{-7}\left(\frac{2\pi}\omega\right)^{3} 
  \left(\frac{2a}\Lambda\right)^{-1} T^2.
\end{equation*}
Combining with \eqref{eq:d^2Fdm^2} we get the same formula
\end{NB2}

We also have
\begin{equation}\label{eq:chiO}
  \exp\left[12\Ain - 8\Bin\right]
  = \sqrt{-1} \Lambda^5 \T^{-2} \left(\frac{2a}\Lambda\right)^{-1}
  \left(\frac{2\pi}\omega\right)^{-1}
\end{equation}
from (\ref{eq:Ain}, \ref{eq:Bin}).
\begin{NB2}
  We have
  \begin{equation*}
    \sqrt{-1}\Lambda^5 \left( \frac{\Lambda^3}{2a} + \cdots \right)^{-2}
    \left(\frac{2a}\Lambda\right)^{-1}
      \left(2\sqrt{-1}a + \cdots\right)^{-1}
      = 1 + \cdots.
  \end{equation*}
\end{NB2}
\end{NB}

\begin{NB}
\subsection{An intermediate check}

We have
\begin{equation*}
  \begin{split}
  & \exp\left(
    4\Ain + \frac{\partial^2 \Fin_0}{\partial m^2}
    \right)
   = \left(\frac1{\sqrt{-1}a}\frac{\pi}\omega\right)^{2}
     \frac{\Lambda^7}{\sqrt{-1}}
  \T^{-10}
  \left(\frac{2a}\Lambda\right)
   \left(\frac{2\pi}\omega\right)^{5}
      \left(\frac{\pi\sqrt{-1}}\omega + \frac{\Lambda^3}{2\T}
  \right)^{8}
\\
  =\; &
  \Lambda^5 \T^{-10}
    \left(\frac{2a}\Lambda\right)^{-1}
    \left(\frac{2\pi}{\sqrt{-1}\omega}\right)^{7}
      \left(\frac{\pi\sqrt{-1}}\omega + \frac{\Lambda^3}{2\T}
  \right)^{8}.
  \end{split}
\end{equation*}
\end{NB}

\subsection{The variable $\phi$}

In the partition function, we need to substitute
$\Lambda^{4/3}a^{-1/3}$ into $\Lambda$.  We denote the substitution by
$\left.\bullet\right|_{\Lambda=\Lambda^{4/3}a^{-1/3}}$.

Let $\fT \defeq\left.\T\right|_{\Lambda=\Lambda^{4/3}a^{-1/3}}$. By
\eqref{eq:Texpand} it has the expansion $\nicefrac{\Lambda^4}{2a^2} +
\cdots$. So we can choose the branch of its square root so that it
starts as 
\( 
   \sqrt{\fT} = \nicefrac{\Lambda^2}{\sqrt{2}a} + \cdots.
\)
We set
\begin{equation}
  \label{eq:phi}
  \phi \defeq \frac{\sqrt{\fT}}{\Lambda}.
\end{equation}
\begin{NB}

  Kota's $\phi$ is given by $1 + \lambda = 3\phi^4$. By (5.63), we have
  \begin{equation*}
    \frac13(1 + \lambda)
    = \frac1{9\Lambda^4} \left( \pd{u}{\log\Lambda} \right)^2
    \overset{\eqref{eq:9}}{=} 
    \frac1{9\Lambda^4}
    \left(u + \left(\frac{\pi}\omega\right)^2\right)^2
    = \frac{\fT^2}{\Lambda^4}.
  \end{equation*}
Therefore
\begin{equation*}
  \phi = \frac{\sqrt{\fT}}{\Lambda}.
\end{equation*}
\end{NB}

From (\ref{eq:4},\ref{eq:5}) we have
\begin{align*}
   & \left(\left.{u}\right|_{\Lambda = \Lambda^{4/3}a^{-1/3}}
     + \left.\left(\frac{\pi}\omega\right)^2\right|_{\Lambda = \Lambda^{4/3}a^{-1/3}}
     \right)^2
  \left(\left.{u}\right|_{\Lambda = \Lambda^{4/3}a^{-1/3}}
    - 2 \left.\left(\frac{\pi}\omega\right)^2\right|_{\Lambda = \Lambda^{4/3}a^{-1/3}}
  \right)
  = \frac{27}4 \Lambda^8 a^{-2},
\\
   & \left(\left.{u}\right|_{\Lambda = \Lambda^{4/3}a^{-1/3}}
     + \left.\left(\frac{\pi}\omega\right)^2\right|_{\Lambda = \Lambda^{4/3}a^{-1/3}}
     \right)
  \left(\left.{u}\right|_{\Lambda = \Lambda^{4/3}a^{-1/3}}
    - \left.\left(\frac{\pi}\omega\right)^2\right|_{\Lambda = \Lambda^{4/3}a^{-1/3}}
    \right) 
  = 3\Lambda^4.
\end{align*}
\begin{NB}
This is compatible with the transformation law, as
\begin{equation*}
   \left({u}(a^{4/3},\Lambda^{4/3})
     + \left(\frac{\pi}\omega\right)^2(a^{4/3},\Lambda^{4/3})
     \right)^2
  \left({u}(a^{4/3},\Lambda^{4/3})
    - 2 \left(\frac{\pi}\omega\right)^2(a^{4/3},\Lambda^{4/3})
    \right) 
  = \frac{27}4 \Lambda^8.
\end{equation*}
\end{NB}%
From the second equation and the definition of $\phi$, we get
\begin{equation}\label{eq:uom}
     \left.\left(\frac1{\Lambda^2} u\right)\right|_{\Lambda = \Lambda^{4/3}a^{-1/3}}
  = \frac12\left(3\phi^2 + \phi^{-2}\right),
\quad
  \left.\left(\frac1{\Lambda^2}
  \left(\frac{\pi}\omega\right)^2\right)\right|_{\Lambda = \Lambda^{4/3}a^{-1/3}}
  = \frac12 \left(3\phi^2 - \phi^{-2}\right).
\end{equation}
Substituting this to the first equation, we obtain
\begin{equation}\label{eq:aT}
    \frac{1}4\Lambda^2 a^{-2}
  = \phi^4 \left(-\frac12\phi^2 + \frac{1}{2\phi^2}\right)
  = \frac12 \phi^2\left(-\phi^4 + 1\right).
\end{equation}
Therefore
\begin{equation}\label{eq:da}
  \frac{da}a
  = - \frac{d\phi}{\phi(1 - \phi^4)}
  (1 - 3\phi^4).
\end{equation}
\begin{NB}
  \begin{equation*}
  = -\left(\frac{d\phi}\phi - \frac{2\phi^3 d\phi}{1 - \phi^4}\right)
  = - \frac{d\phi}{\phi(1 - \phi^4)}
  \left((1 - \phi^4) - 2\phi^4\right)
  \end{equation*}
\end{NB}%
By the above formulas, all the terms computed in
\subsecref{subsec:inst} can be expressed merely by $\phi$. Hence we
will treat $\phi$ as a variable instead of $a$.
\begin{NB}
We shall take the residue at $a=m=0$. But this is the same as the
residue at $a=m=\infty$ (up to sign), as the function is in
$\C[\nicefrac{a}\Lambda, \nicefrac{\Lambda}a]]$: we just take the
coefficient of $(\nicefrac{a}\Lambda)^{0}$.
\end{NB}%
We will write the differential $\mathcal B(\xi_1,\xi;a)da$, of which we
take the residue in Mochizuki's formula in terms of $\phi$. The
explicit formula will be given in the next section, but it is already
clear that it will involve several square roots and rational
expressions in $\phi$, when we expand it as a  series in $x$ and $z$. We
will see that square roots, in fact, do not appear, so we get a
rational differential in $\phi$ defined over $\proj^1$.

We will use the residue theorem to re-write Mochizuki's formula as sum
of residues at other poles in the next section. But it is instructive
to see the meaning of poles at this stage.

Since $\phi = \nicefrac{\Lambda}{\sqrt{2}a} + \cdots$, we have $\phi =
0$ at $a=\infty$.
By \eqref{eq:aT} there are other point $\phi^4 = 1$ giving
$a=\infty$. By \eqref{eq:da} they are indeed poles of the differential.
From \eqref{eq:uom} we have
\begin{equation*}
  \left.u^2 \right|_{\Lambda = \Lambda^{4/3}a^{-1/3}}= 4\Lambda^4.
\end{equation*}
As $u$ is coupled with the variable $x$ for the $\mu$-class of the
point in the formula in \thmref{thm:partition}, these correspond to
the KM-simple type condition in Definition~\ref{def:KMsimple}. In
\cite{Witten} it was noted that the Seiberg-Witten curve (for the pure
theory) has singularities at those points, and they give the
Seiberg-Witten invariant contribution to Donaldson
invariants. Therefore even before the actual calculation, it is
natural to expect that the residues at $\phi^4 = 1$ give what is
expected in Witten's conjecture \eqref{eq:Witten}.

There are other poles, which is already seen in \eqref{eq:uom}, at
$\phi^4 = 1/3$. At those points $\pi/\omega = \nicefrac{\sqrt{-1}}2
\nicefrac{\partial u}{\partial a}$ vanishes. It means that the
Seiberg-Witten curve completely degenerates as we have $e_1 = e_2 =
e_3$. It is called a {\it superconformal point\/} in the physics
literature, and is the origin of the superconformal simple type
condition \cite{MMP}. Therefore it is natural to expect that the
residue at $\phi^4 = 1/3$ is related to the superconformal simple type
condition. We will see that this is indeed so.

\begin{NB}
An earlier formulation:

Suppose $G$ is homogeneous of degree $d$. Then
\begin{equation*}
  G(a,\Lambda) = \sum_n G_n \left(\frac\Lambda{a}\right)^{3n} a^d.
\end{equation*}
Therefore
\begin{equation*}
  \begin{split}
  & G(a,\Lambda^{4/3}a^{-1/3})
  = \sum_n G_n\left(\frac\Lambda{a}\right)^{4n} a^d
  = \sum_n G_n\left(\frac\Lambda{a}\right)^{4n} (a^{4/3})^d a^{-d/3}
\\
  = \; & G(a^{4/3},\Lambda^{4/3}) a^{-d/3}
  = G(a^{4/3},\Lambda^{4/3}) (a^{4/3})^{-d/4}.
  \end{split}
\end{equation*}
If $G$ contains a perturbative part like $\log (a/\Lambda)$, we need a
separate treatment.

Since $u$ is of degree $2$, we have
\begin{equation*}
   u^{\mathrm{inst}}(a,\Lambda^{4/3}a^{-1/3}) =
   u^{\mathrm{inst}}(a^{4/3},\Lambda^{4/3}) a^{-2/3}.
\end{equation*}
Since $u^{\mathrm{pert}} = a^2$, we get
\begin{equation*}
   u(a,\Lambda^{4/3}a^{-1/3})
     = a^{-2/3} u(a^{4/3},\Lambda^{4/3}).
\end{equation*}
\begin{NB2}
  \begin{equation*}
   \mathrm{LHS} = a^2 + u^{\mathrm{inst}}(a,\Lambda^{4/3}a^{-1/3})
     = a^2 + u^{\mathrm{inst}}(a^{4/3},\Lambda^{4/3}) a^{-2/3}
     = a^2 + \left(u(a^{4/3},\Lambda^{4/3}) - a^{8/3}\right) a^{-2/3}.
  \end{equation*}
But this is obvious as we can apply the above transformation formula
also for $u = u^{\mathrm{pert}} + u^{\mathrm{inst}}$.
\end{NB2}
Similarly we have
\begin{equation*}
   \left(\frac{\pi}\omega\right)(a,\Lambda^{4/3}a^{-1/3})
     = a^{-2/3} \left(\frac{\pi}\omega\right)(a^{4/3},\Lambda^{4/3}).
\end{equation*}
\end{NB}

\begin{NB}
  We consider the equation above obtained from \eqref{eq:5} as a
  quadratic curve in $\proj^2$:
\begin{equation*}
  \left( u + \left(\frac\pi\omega\right)^2\right)
  \left( u - \left(\frac\pi\omega\right)^2\right)
  = \left(\sqrt{3}\Lambda^2\right)^2
\end{equation*}
Therefore
\begin{equation*}
  u = \zeta_0^2 + \zeta_1^2,\quad
  \left(\frac\pi\omega\right)^2 = \zeta_0^2 - \zeta_1^2,\quad
  \sqrt{3}\Lambda^2 = 2\zeta_0 \zeta_1,
\end{equation*}
and hence
\begin{equation*}
   \frac{u}{\Lambda^2}
   = \frac{\sqrt{3}}2\left(\zeta + \zeta^{-1}\right),\quad
   \frac1{\Lambda^2} \left(\frac\pi\omega\right)^2
   = \frac{\sqrt{3}}2\left(\zeta - \zeta^{-1}\right),\quad
   \frac1{\Lambda^2} \left({u} + \left(\frac\pi\omega\right)^2\right)
   = {\sqrt{3}}\zeta,
\end{equation*}
with $\zeta = \zeta_0/\zeta_1$. Therefore
$\zeta = \sqrt{3}\fT/\Lambda^2$.
\end{NB}

\section{Computation}

\begin{NB}
\subsection{Rearrangements}

We replace $\xi_1$, $\xi_2$ by the usual Seiberg-Witten class
$\tilde\xi_1 = 2\xi_1 - K_X$ together with an auxiliary term $\xi_2 -
\xi_1$. Once this will be done, the final answer should be independent
of $\xi_2-\xi_1$.

We should also note $(\xi_1,\xi_1 - K_X) = 0$, i.e.,
\begin{equation*}
  (\tilde\xi_1^2) = (K_X^2) = 2\chi(X) + 3\sigma(X).
\end{equation*}
This is called the {\it SW-simple type\/} condition. Then
\begin{equation*}
  \begin{split}
    & (\xi_2 - \xi_1, \xi - K_X)
      = (\xi_2 - \xi_1,\tilde\xi_1 - (\xi_1 - \xi_2))
    = ((\xi_2 - \xi_1)^2) + (\xi_2 - \xi_1, \tilde\xi_1),
\\
    & ((\xi - K_X)^2)
      = ((\tilde\xi_1 - (\xi_1 - \xi_2))^2)
    = (\tilde\xi_1^2) + 2(\tilde\xi_1, \xi_2 - \xi_1)
    + ((\xi_2 - \xi_1)^2).
  \end{split}
\end{equation*}

Therefore
\begin{equation*}
  \begin{split}
 & 
 \begin{aligned}[t]
 & \frac{1}{8}\frac{\partial^2 \Fin_0}{\partial a^2}
((\xi_2-\xi_1)^2)
+\frac{1}{4}\frac{\partial^2 \Fin_0}{\partial a \partial m}
(\xi_2-\xi_1,\xi-K_X)
\\
&\quad
+\frac{1}{8}\frac{\partial^2 \Fin_0}{\partial m^2}
((\xi-K_X)^2) + \chi(X)\Ain+\sigma(X)\Bin
 \end{aligned}
\\
 =\; &
 \begin{aligned}[t]
  & \frac{1}{8}\left(\frac{\partial^2 \Fin_0}{\partial a^2}
   + 2\frac{\partial^2 \Fin_0}{\partial a \partial m}
   + \frac{\partial^2 \Fin_0}{\partial m^2}\right)
 ((\xi_2 - \xi_1)^2)
 + \frac14
 \left(
   \frac{\partial^2 \Fin_0}{\partial a \partial m}
   + \frac{\partial^2 \Fin_0}{\partial m^2}
   \right)
 (\xi_2 - \xi_1, \tilde\xi_1)
\\
  & \quad
  + \left(\Ain + \frac14 \frac{\partial^2 \Fin_0}{\partial m^2}\right)
  \chi(X) 
  + \left(\Bin + \frac38 \frac{\partial^2 \Fin_0}{\partial m^2}\right)
  \sigma(X).
 \end{aligned}
  \end{split}
\end{equation*}

We also note that
\begin{equation*}
  \chi(\shfO_X)
  \begin{NB2}
    = \int_X \Todd_2(X)
    = \frac{\ve_1^2 + \ve_2^2 + 3\ve_1\ve_2}{12\ve_1\ve_2}
    = \frac14\left(\frac{\ve_1\ve_2}{\ve_1\ve_2}
    + \frac{\ve_1^2 + \ve_2^2}{3\ve_1\ve_2}\right)
  \end{NB2}%
  = \frac14 \left(\chi(X) + \sigma(X)\right).
\end{equation*}
Conversely
\begin{equation*}
    \chi(X) = 12\chi(\shfO_X) - (K_X)^2,\quad
    \sigma(X) = (K_X^2) - 8\chi(\shfO_X).
\end{equation*}
\begin{NB2}
Therefore
  \begin{equation*}
    \begin{split}
   &   \left(\Ain + \frac14 \frac{\partial^2 \Fin_0}{\partial m^2}\right)
  \chi(X) 
  + \left(\Bin + \frac38 \frac{\partial^2 \Fin_0}{\partial m^2}\right)
  \sigma(X)
\\
  = \; &
  \left(- \Ain + \Bin
    + \frac18 \frac{\partial^2 \Fin_0}{\partial m^2}\right)(K_X^2)
  + \left(12\Ain - 8\Bin\right)\chi(\shfO_X)
    \end{split}
  \end{equation*}
\end{NB2}

We also note
\begin{equation*}
  \begin{split}
  & \frac16 \frac{\partial^2 \Fin_0}{\partial a\partial\log\Lambda}
  (\xi_2 - \xi_1,\alpha) z
  + 
  \frac16 \frac{\partial^2 \Fin_0}{\partial m\partial\log\Lambda}
  (\xi - K_X,\alpha) z
\\
  =\; &
  \frac16 \left(
  \frac{\partial^2 \Fin_0}{\partial a\partial\log\Lambda}
  + \frac{\partial^2 \Fin_0}{\partial m\partial\log\Lambda}\right)
  (\xi_2 - \xi_1,\alpha) z
  + 
  \frac16 \frac{\partial^2 \Fin_0}{\partial m\partial\log\Lambda}
  (\tilde \xi,\alpha) z
  \end{split}
\end{equation*}

\subsection{Various terms}

In this subsection, we compute various terms in Mochizuki's formula.
This part should be moved to {\color{red}{\bf NB}} in the final version.

*****************************************************************

From (\ref{eq:uom}, \ref{eq:aT}) we have
\begin{equation*}
  \begin{split}
  & \frac{2\pi\sqrt{-1}}{\omega\Lambda}
  = \sqrt{-2}\left( \frac{3\fT}{\Lambda^2}
    - \frac{\Lambda^2}{\fT}\right)^{1/2}
  = \sqrt{2}\left( - 3\phi^2 + \phi^{-2}\right)^{1/2}
  = - \sqrt{2} \phi^{-1} \sqrt{1 - 3\phi^4},
\\
  & \Lambda^3 (a\fT)^{-1}
  = \sqrt{2}\left(-\phi^2 + \phi^{-2}\right)^{1/2}
  = \sqrt{2} \phi^{-1} \sqrt{1 - \phi^4}.
  \end{split}
\end{equation*}
We will show that the final answer is independent of the choice of the
branch of the square root later.

*****************************************************************

\begin{NB2}
  $x$
\end{NB2}

\begin{equation*}
  -s^2
  + \left.\frac13 
    \pd{\Fin_0}{\log\Lambda}\right|_{\Lambda = \Lambda^{4/3} a^{-1/3}} = -u
  = - \frac{\Lambda^2}2\left(3\phi^2 + \phi^{-2}\right)
\end{equation*}
from \eqref{eq:uom}.

*****************************************************************

\begin{NB2}
  $(\alpha^2)z^2$
\end{NB2}

\begin{equation*}
  \left.\frac1{18} \frac{\partial^2\Fin_0}{\partial(\log\Lambda)^2}
    \right|_{\Lambda = \Lambda^{4/3} a^{-1/3}}
      = -\frac12 \fT
      = -\frac12 \phi^2 \Lambda^2
\end{equation*}
by \eqref{eq:9}.
\begin{NB2}
  There is a sign inconsistency with Kota's answer. But this is
  probably harmless, as we will take the residue at both $\phi^2 = 1$
  and $-1$.
  At $\phi^2 = \pm 1$, we have
\(
  \mp \frac12 \Lambda^2 (\alpha^2)z^2
  \mp 2\Lambda^2 x,
\)
which {\it is\/} the expected answer.
\end{NB2}%

***************************************************************

\begin{NB2}
  $((\xi_2 - \xi_1)^2)$
\end{NB2}

We have
\begin{equation*}
  \left.\exp \Biggl[
    \frac14 \left(\frac{\partial^2 \Fin_0}{\partial m^2}
    + 2\frac{\partial^2 \Fin_0}{\partial a\partial m}
    + \frac{\partial^2 \Fin_0}{\partial a^2}\right) \Biggr]
  \right|_{\Lambda = \Lambda^{4/3} a^{-1/3}}
  = \frac{\Lambda^4}{2a^2} \fT^{-1}
\end{equation*}
from \eqref{eq:xi^2}.
We will take the $(\xi_2 - \xi_1)^2$ power of this function. There are
also factors containing $(\xi_2 - \xi_1)^2$. In total, we get
\begin{equation}
  2^{-1/2}
  \left(\frac{2a}\Lambda\right)
  \left.\exp \Biggl[
    \frac18 \left(\frac{\partial^2 \Fin_0}{\partial m^2}
    + 2\frac{\partial^2 \Fin_0}{\partial a\partial m}
    + \frac{\partial^2 \Fin_0}{\partial a^2}\right)\Biggr]
  \right|_{\Lambda = \Lambda^{4/3} a^{-1/3}}
  = \frac{\Lambda}{\sqrt{\fT}} = \phi^{-1}.
\end{equation}
More precisely, this holds up to sign. But we compare the leading term
of both sides, and find that the above holds from our choice of $\sqrt{\fT}$.

*********************************

\begin{NB2}
  $(\xi_2 - \xi_1, \tilde\xi_1)$
\end{NB2}

\begin{equation*}
  \begin{split}
  & 2^{-1/2}
  \left.
  \exp\left[
    \frac14\left(\frac{\partial^2 \Fin_0}{\partial a\partial m}
  + \frac{\partial^2 \Fin_0}{\partial a^2}
   \right)
    \right]  \right|_{\Lambda = \Lambda^{4/3} a^{-1/3}}
  = \frac{\Lambda^2}{2\fT^{3/2}}
    \left(
       \frac{2\pi\sqrt{-1}}\omega + \frac{\Lambda^4}{a\fT}
  \right)
\\
  =\; &
  \frac1{\sqrt{2}} \phi^{-4}\left(
    - \sqrt{1 - 3\phi^4} + \sqrt{1 - \phi^4}
    \right)
  \end{split}
\end{equation*}
from \eqref{eq:13}.
\begin{NB2}
  This is equal to
  \begin{equation*}
    \left\{
    \frac1{\sqrt{2}\phi^{2}} \left(
    \sqrt{1 - \phi^4} - \sqrt{1 - 3\phi^4}
    \right)
    \right\}^{-1}.
  \end{equation*}
\end{NB2}%
At $\phi^2 = \pm 1$, this becomes $\mp \sqrt{-1}$.

*********************************

\begin{NB2}
  $(\xi_2 - \xi_1, \alpha)$
\end{NB2}

\begin{equation*}
  - s + \frac16 \left.\left(
      \frac{\partial^2 \Fin_0}{\partial a\partial\log\Lambda}
      + 
      \frac{\partial^2 \Fin_0}{\partial m\partial\log\Lambda}
    \right)\right|_{\Lambda = \Lambda^{4/3} a^{-1/3}}
  = -\frac12 \frac{\Lambda^4}{a\fT}
  = -\frac\Lambda{\sqrt{2}}\, \frac{\sqrt{1-\phi^4}}{\phi}
\end{equation*}
by \eqref{eq:d^2FdadL}.
This vanishes at $\phi^4 = 1$.

*********************************

\begin{NB2}
  $(\tilde\xi_1, \alpha)$
\end{NB2}

From \eqref{eq:d^2FdmdL} we have
\begin{equation*}
  \frac16\left.
      \frac{\partial^2 \Fin_0}{\partial m\partial\log\Lambda}
    \right|_{\Lambda = \Lambda^{4/3} a^{-1/3}}
    = -\frac12\left(
      \frac{\Lambda^4}{a\fT} + \frac{2\pi\sqrt{-1}}\omega
      \right)
    = -\frac{\Lambda}{\sqrt{2}\phi}
    \left(
      \sqrt{1 - \phi^4} - \sqrt{1 - 3\phi^4}
      \right).
\end{equation*}

*********************************

\begin{NB2}
  $(K_X^2)$
\end{NB2}

From \eqref{eq:BA} we have
\begin{equation*}
  \begin{split}
  & \left.
    \exp \left[\Bin - \Ain + \frac18\frac{\partial^2\Fin_0}{\partial m^2}\right]
  \right|_{\Lambda = \Lambda^{4/3} a^{-1/3}}
  = \sqrt{-1} \left(\frac{2\pi}\omega\right)
  \left(
    \frac{\pi\sqrt{-1}}\omega - \frac{\Lambda^4}{2a \fT}
  \right)^{-1}
\\
  =\; &
  - \sqrt{2} \Lambda \phi^{-1}\sqrt{1-3\phi^4}
  \times
  \left\{
  \frac\Lambda{\sqrt{2}\phi}
  \left(
    - \sqrt{1 - 3\phi^4} - \sqrt{1 - \phi^4}
    \right)\right\}^{-1}
\\
  =\; &
    2 \sqrt{1 - 3\phi^4} \left(
    \sqrt{1 - \phi^4} + \sqrt{1 - 3\phi^4}
    \right)^{-1}.
  \end{split}
\end{equation*}
\begin{NB2}
  The sign mistake is corrected. Dec.~21.
\end{NB2}

At $\phi^4 = 1$, this is equal to
\(
  2\sqrt{-2}/(\sqrt{-2}) = 2.
\)

*********************************

\begin{NB2}
$\chi(\shfO_X)$
\end{NB2}

From \eqref{eq:chiO} we have
\begin{equation*}
  \begin{split}
  & \left(
    \frac{2s}\Lambda
    \right)^3 2^{-2}
  \left.\exp\left[12\Ain - 8\Bin\right]\right|_{\Lambda = \Lambda^{4/3} a^{-1/3}}
\\
  = \; &
  \left(
    \frac{2s}\Lambda
    \right)^3 2^{-2}
    \times \sqrt{-1} \Lambda^{20/3} a^{-5/3} \fT^{-2} 2^{-1}
  \left(\frac{a}\Lambda\right)^{-4/3}
  \left(\frac{2\pi}\omega\right)^{-1}
\\
  =\; & \sqrt{-1} \Lambda^5
  \fT^{-2}  \left(\frac{2\pi}\omega\right)^{-1}
  = - \sqrt{-1} \phi^{-4}
  \times
  \frac{\sqrt{-1}}{\sqrt{2}}\frac{\phi}{\sqrt{1 - 3\phi^4}}
  = \frac1{\sqrt{2}\phi^{3}\sqrt{1 - 3\phi^4}}
  .
  \end{split}
\end{equation*}

\begin{NB2}
  Here are old checks:

We have
\begin{multline*}
  \left.\exp\left(\Ain + \frac14 \frac{\partial^2 \Fin_0}{\partial m^2}\right)
    \right|_{\Lambda = \Lambda^{4/3} a^{-1/3}}
\\
  =
  \left.\left(\frac1{\sqrt{-1}a}\frac{\pi}\omega\right)^{1/2}
  \times
  \left[
   \frac{\sqrt{-1}}{\Lambda^7}
  \left(\frac{2a}\Lambda\right)^{-1}
   \left(\frac{2\pi}\omega\right)^{-5}
  \T^{6}\right]^{-1/4}
    \right|_{\Lambda = \Lambda^{4/3} a^{-1/3}}
\\
  = \left(\frac1{\sqrt{-1}a}\frac{\pi}\omega\right)^{1/2}
  \times
  \left[
   \frac{\sqrt{-1}a^{7/3}}{\Lambda^{28/3}}
  \left(\frac{2a}\Lambda\right)^{-4/3}
   \left(\frac{2\pi}\omega\right)^{-5}
  \fT^{6}\right]^{-1/4}
\\
  = (\sqrt{-1})^{-3/4}   2^{13/12}
  \Lambda^{5/4}
  \left(\frac{\pi}\omega\right)^{7/4}
  \fT^{-3/2}
  \left(\frac{2a}\Lambda\right)^{- 3/4}
\end{multline*}
from \eqref{eq:Ain} and \eqref{eq:d^2Fdm^2}.

Let us study the power of $a$ after the substitution $\Lambda =
\Lambda^{4/3}a^{-1/3}$:
\begin{equation*}
  -\frac12
  - \frac74 \times \frac13
  + \frac14 \times \frac43
  = -\frac34.
\end{equation*}
This term appears with $\chi(X)$ in Mochizuki's formula. It will
cancel with $3\chi(\shfO_X) = \nicefrac34(\chi(X)+\sigma(X))$~!

We have
\begin{multline*}
  \left.
  \exp\left(\Bin + \frac38 \frac{\partial^2 \Fin_0}{\partial m^2}\right)
  \right|_{\Lambda = \Lambda^{4/3} a^{-1/3}}
\\
  =
  \left(- {\sqrt{-1}} 
  \Lambda^{-44/3}a^{11/3}
  \left(\frac{2\pi}\omega\right)^7
  \left(\frac{2a}\Lambda\right)^{-20/3}
  \fT^2\right)^{1/8}
  \times
    \left[
   \frac{\sqrt{-1}a^{7/3}}{\Lambda^{28/3}}
  \left(\frac{2a}\Lambda\right)^{-4/3}
   \left(\frac{2\pi}\omega\right)^{-5}
  \fT^{6}\right]^{-3/8}
\\
  = (\sqrt{-1})^{-1/2} 
  2^{5/12}
  \fT^{-2}\left(\frac{2\pi}\omega\right)^{11/4}
  \left(\frac{2a}\Lambda\right)^{-3/4}
  \Lambda^{5/4}.
\end{multline*}
from \eqref{eq:Bin} and \eqref{eq:d^2Fdm^2}.
\begin{NB2}
The contribution of the factor $\exp\left(\nicefrac38
  \nicefrac{\partial^2 \Fin_0}{\partial m^2}\right)$ to the power of
$a$ is $-3/8$ as above. From $\Bin$, we get
\begin{equation*}
  \frac{11}8\times \frac13 - \frac58\times\frac43
  = -\frac38.
\end{equation*}
So the sum is again
\begin{equation*}
  -\frac38 - \frac38 = -\frac34,
\end{equation*}
and cancel with $3\chi(\shfO_X) = \nicefrac34(\chi(X)+\sigma(X))$~!
\end{NB2}

Consider the term \eqref{eq:13}. After the substitution $\Lambda =
\Lambda^{4/3}a^{-1/3}$, we get
\begin{equation*}
  \left.  \exp\left[
    -\frac12\left(\frac{\partial^2 \Fin_0}{\partial m^2}
  + \frac{\partial^2 \Fin_0}{\partial a\partial m}
   \right)
  \right]\right|_{\Lambda = \Lambda^{4/3}a^{-1/3}}
  =
  \frac{2\fT^3}{\Lambda^4}
  \left(
      \frac{2\pi\sqrt{-1}}\omega + \Lambda^4 (a\fT)^{-1}
  \right)^{-2}.
\end{equation*}
\begin{NB2}
Original version:
  \begin{equation*}
      - \frac1{2\Lambda^4}
  \fT^2\,
  \frac{
      \frac{2\pi\sqrt{-1}}\omega - \Lambda^4 (a\fT)^{-1}
   }
   {
       \frac{2\pi\sqrt{-1}}\omega + \Lambda^4 (a\fT)^{-1}
   }.
  \end{equation*}
\end{NB2}
From \eqref{eq:cubic} we have
\begin{equation*}
  \begin{split}
  \Lambda^2 (a\fT)^{-1}
  &= \frac2{\sqrt{3}\Lambda^2} \left(u(a,\Lambda^{4/3}a^{-1/3})
      - 2 \left(\frac\pi\omega\right)^2(a,\Lambda^{4/3}a^{-1/3})\right)^{1/2}
\\
  &= \sqrt{2}\left(-\frac{\fT}{\Lambda^2} + \frac{\Lambda^2}\fT\right)^{1/2}.
  \end{split}
\end{equation*}
We have
\begin{equation*}
  \frac{2\pi\sqrt{-1}}{\omega\Lambda^2}
  = \sqrt{2}\left( \frac{3\fT}{\Lambda^2}
    - \frac{\Lambda^2}{\fT}\right)^{1/2}.
\end{equation*}

$\Lambda^4 (a\fT)^{-1}$ becomes $0$ at $\fT = \pm
\Lambda^2$. Therefore the above becomes
\begin{equation*}
  - \frac12.
\end{equation*}
Since this term appears as 
\(
  \exp\left[
    \frac14\left(\nicefrac{\partial^2 \Fin_0}{\partial m^2}
  + \nicefrac{\partial^2 \Fin_0}{\partial a\partial m}
   \right)
  \right](\xi_2 - \xi_1,\tilde\xi_1),
\)
we get
\begin{equation*}
  \sqrt{2}^{(\xi_2 - \xi_1,\tilde\xi_1)}.
\end{equation*}
This cancels with $2^{-(\xi_2 - \xi_1,\tilde\xi_1)/2}$ in Mochizuki's
formula !
\end{NB2}

\subsection{}

Substituting all terms into Mochizuki's formula to get (recall $y =
(2,\xi,n)$)
\begin{NB2}
  Here $\widetilde{\cal A}(\xi_1,y)$ means the expression before
  taking the residue at $s=0$. I need a correction later. I believe
  that it is better to change $\widetilde{\mathcal A}$, to the
  expression before taking the residue, and express Mochizuki's
  formula as $\Res_{s=0}\widetilde{\mathcal A}$.
\end{NB2}%

\begin{equation*}
  \begin{split}
    \mathcal B(\tilde\xi_1,\xi) \defeq
  & \sum_n \Lambda^{4n - (\xi^2) - 3\chi(\shfO_X)} \widetilde{\cal A}(\xi_1,y)
\\
  = \; &
  \begin{aligned}[t]
  & -(-1)^{\frac{((\xi_2-\xi_1)^2)+(K_X,\xi_2-\xi_1))}{2}+\chi({\cal O}_X)}
  \frac{1 - 3\phi^4}{1 - \phi^4} \frac{d\phi}\phi
\\  
  & \times
  \exp\Biggl[- \frac{\Lambda^2}2\left(3\phi^2 + \phi^{-2}\right) x
  - \frac12\phi^2\Lambda^2 (\alpha^2)z^2 \Biggr]
  \phi^{-((\xi_2 - \xi_1)^2)}
\\
  & \times
    \left(
      \frac1{\sqrt{2}} \phi^{-4}\left(
      -\sqrt{1 - 3\phi^4} + \sqrt{1 - \phi^4}
      \right)
    \right)^{(\xi_2 - \xi_1, \tilde\xi_1)}
\\
  & \times
    \exp\left(
            -\frac\Lambda{\sqrt{2}}\, \frac{\sqrt{1-\phi^4}}{\phi}    
            (\xi_2 - \xi_1, \alpha)z
      \right)
\\
  & \times
   \exp\left(-\frac{\Lambda}{\sqrt{2}\phi}
    \left(
      \sqrt{1 - \phi^4} - \sqrt{1 - 3\phi^4}
      \right)
      (\tilde\xi_1, \alpha)
      \right)
\\
  & \times
   \left(
     2 \sqrt{1 - 3\phi^4} \left(
    \sqrt{1 - \phi^4} + \sqrt{1 - 3\phi^4}
    \right)^{-1}
    \right)^{(K_X^2)}
   \left(
     \frac1{\sqrt{2}\phi^{3}\sqrt{1 - 3\phi^4}}
   \right)^{\chi(\shfO_X)}.
  \end{aligned}
  \end{split}
\end{equation*}
Using
\(
  \xi_2 - \xi_1 = \xi - K_X - \tilde\xi_1
\)
and
\(
   (\tilde\xi_1^2) = (K_X^2),
\)
we rewrite this as follows:
\end{NB}%
\subsection{Explicit expression of the differential}

Substituting all terms computed in the previous section into the
formula in \thmref{thm:partition}, we obtain
\begin{equation}\label{eq:N1}
  \mathcal B(\tilde\xi_1,\xi;a)da =
  \begin{aligned}[t]
  & -(-1)^{\frac{(\xi,\xi+K_X)-(K_X^2) - (K_X,\tilde\xi_1)}{2} + \chi_h(X)}
  \frac{1 - 3\phi^4}{1 - \phi^4} \frac{d\phi}\phi
\\  
  & \times
  \exp\Biggl[- \frac{\Lambda^2}2\left(3\phi^2 + \phi^{-2}\right) x
  - \frac12\phi^2\Lambda^2 (\alpha^2)z^2 \Biggr]
  \phi^{-((\xi - K_X)^2)-(K_X)^2 - 3\chi_h(X)}
\\
  & \times
    \left(
      \frac1{\sqrt{2}} \phi^{-2}\left(
      \sqrt{1 - \phi^4} - \sqrt{1 - 3\phi^4}
      \right)
    \right)^{(\xi - K_X, \tilde\xi_1)}
\\
  & \times
   \exp\left(\frac{\Lambda}{\sqrt{2}}
     \phi^{-1}
     \left(
     \sqrt{1 - 3\phi^4}
      (\tilde\xi_1, \alpha)z
    -  
    \sqrt{1-\phi^4}
            (\xi - K_X, \alpha)z
     \right)\right)
\\
  & \times
   \left(
     \sqrt{2} \sqrt{1 - 3\phi^4}
    \right)^{(K_X^2) - \chi_h(X)}.
  \end{aligned}
\end{equation}
\begin{NB}
Here we have used
\begin{equation*}
  ((\xi_2 - \xi_1)^2) + (K_X,\xi_2 - \xi_1)
  \equiv  (\xi, \xi + K_X) - (K_X^2) - (K_X,\tilde\xi_1)   \pmod 4,
\end{equation*}
which follows from $\tilde\xi_1 \equiv K_X \pmod 2$.
\begin{NB2}
  \begin{equation*}
    \begin{split}
    & ((\xi_2 - \xi_1)^2) + (K_X,\xi_2 - \xi_1)
    = ((\xi - K_X - \tilde\xi_1)^2) + (K_X, \xi - K_X - \tilde\xi_1)
\\
    =\; & ((\xi - K_X)^2) - 2(\xi-K_X,\tilde\xi_1) + (\tilde\xi_1^2)
    + (K_X, \xi - K_X) - (K_X,\tilde\xi_1)
\\
    =\; &
    (\xi, \xi - K_X) - 2(\xi-K_X,\tilde\xi_1) + (K_X^2) - (K_X,\tilde\xi_1)
\\
   \equiv\; &
   (\xi, \xi - K_X) - 2(\xi-K_X, K_X) + (K_X^2) - (K_X,\tilde\xi_1)
   \pmod 4
\\
   \equiv \; &
   (\xi, \xi + K_X) - (K_X^2) - (K_X,\tilde\xi_1)   \pmod 4
    \end{split}
  \end{equation*}
  This is Kota's (5.82).
  When we replace $\tilde\xi_1$ by
  $-\tilde\xi_1$ (keeping $\xi$), we have the difference
  $2(K_X,\tilde\xi_1) \equiv 2(K_X^2) \bmod 4$.

Note that
\(
   (\xi,\xi+K_X)
\)
is even by Wu's (or the adjunction) formula and
\(
  (-1)^{\nicefrac{(\xi, \xi + K_X)}2}
\)
is the usual term coming from the orientation convention. We also have
that
\(
   (K_X^2) - (K_X,\tilde\xi_1) = (K_X, K_X - \tilde\xi_1)
   = 2(K_X,K_X - \xi_1)
\)
is even.
\end{NB2}%
\end{NB}%

\begin{NB}
  There is inconsistency with Kota's formula (5.81). The sign
$(-1)^{(K_X^2)}$ is missing in (5.81). 

  My mistake is corrected. Dec.~21.
\end{NB}%

This is a simple substitution except that we need to take square
roots or $8^{\mathrm{th}}$ roots for some expressions. For example,
the term with $((\xi-K_X)^2)$ is
\begin{equation*}
  2^{-1/2}
  \left(\frac{2a}\Lambda\right)
  \left.\exp \Biggl[
    \frac18 \left(\frac{\partial^2 \Fin_0}{\partial m^2}
    + 2\frac{\partial^2 \Fin_0}{\partial a\partial m}
    + \frac{\partial^2 \Fin_0}{\partial a^2}\right)\Biggr]
  \right|_{\Lambda = \Lambda^{4/3} a^{-1/3}}.
\end{equation*}
From \eqref{eq:xi^2} the square of this is equal to
$\Lambda^2/\fT$. Since the leading term of the above is
$\sqrt{2}a/\Lambda$, we find that it is equal to
${\Lambda}/{\sqrt{\fT}} = \phi^{-1}$ from our choice of
$\sqrt{\fT}$. We use the same argument for other expressions involving
square roots.

When we expand $\widetilde{\mathcal A}(\xi_1,y;a)$ into a formal
power series in $z$, $x$ as $\sum_{k,l} {\mathcal A}_{k,l} z^k x^l$,
we will be interested in the case $k + 2l = 4n - (\xi^2) -
3\chi_h(X) = \dim M_H(y)$, otherwise the residue at $\phi=0$ vanishes by
the cohomology degree reason.
Note also that $\dim M_H(y)\equiv -(\xi^2) - 3\chi_h(X)\bmod 4$ is
independent of $n$.
Therefore we decompose $\mathcal B(\tilde\xi_1,\xi;a)$ as
\begin{equation*}
  \mathcal B(\tilde\xi_1,\xi;a)
  = \mathcal B^{(0)}(\tilde\xi_1,\xi;a)
  + \mathcal B^{(1)}(\tilde\xi_1,\xi;a)
  + \mathcal B^{(2)}(\tilde\xi_1,\xi;a)
  +  \mathcal B^{(3)}(\tilde\xi_1,\xi;a)
\end{equation*}
according to $(k+2l)\bmod 4$. If we write variables $(x,z)$, those are
given explicitly as
\begin{equation*}
  \mathcal B^{(p)}(\tilde\xi_1,\xi;a)(x,z)
  = \frac14
  \sum_{q=0}^3
   (\sqrt{-1})^{-qp}
     \mathcal B(\tilde\xi_1,\xi;a)((-1)^q x,(\sqrt{-1})^q z).
\end{equation*}
\begin{NB}
  For example,
  \begin{equation*}
    \mathcal B^{(3)}(\tilde\xi_1,\xi;a)
    =\frac14
      \left(
      \mathcal B(\tilde\xi_1,\xi;a)(x,z)
     + \sqrt{-1}     \mathcal B(\tilde\xi_1,\xi;a)(-x,\sqrt{-1}z)
     + (\sqrt{-1})^2  \mathcal B(\tilde\xi_1,\xi;a)(x,-z)
     + (\sqrt{-1})^3   \mathcal B(\tilde\xi_1,\xi;a)(-x,-\sqrt{-1}z)
    \right).
  \end{equation*}
\end{NB}%
We will be concerned with $\mathcal B^{(\dim
  M_H(y))}(\tilde\xi_1,\xi;a)$, where we understand
$\dim M_H(y)$ modulo $4$ as explained above.

We will be interested in the sum over all Seiberg-Witten classes
$\tilde\xi_1$. Therefore we can combine the contribution for
$\tilde\xi_1$ and $-\tilde\xi_1$, using
\(
   \SW(-\tilde\xi_1) = (-1)^{\chi_h(X)} \SW(\tilde\xi_1).
\)
\begin{NB}
and
\(
   (-1)^{(K_X,K_X-\tilde\xi_1)/2}
   = (-1)^{(K_X,K_X+\tilde\xi_1)/2} (-1)^{(K_X^2)}.
\)
\end{NB}%
Hence we will be interested in
\begin{equation}\label{eq:interested}
  \mathcal B^{(\dim M_H(y))}(\tilde\xi_1,\xi;a)
  + (-1)^{\chi_h(X)}\mathcal B^{(\dim M_H(y))}(-\tilde\xi_1,\xi;a).
\end{equation}
\begin{NB}
{\allowdisplaybreaks
\begin{equation}\label{eq:N2}
  \begin{split}
    & \mathcal B(\tilde\xi_1,\xi) +
    (-1)^{\chi(\shfO_X)}\mathcal B(-\tilde\xi_1,\xi)
\\
  =\;   & \sum_n \Lambda^{4n - (\xi^2) - 3\chi(\shfO_X)} 
  \left(\widetilde{\cal A}(\xi_1,y)
  +  (-1)^{\chi(\shfO_X) - (K_X^2)}\widetilde{\cal A}(K_X - \xi_1,y)\right)
\\
  =\; &
  \begin{aligned}[t]
  & -(-1)^{\frac{(\xi,\xi+K_X)-(K_X^2) - (K_X,\tilde\xi_1)}{2} + \chi(\shfO_X)}
  \frac{1 - 3\phi^4}{1 - \phi^4} \frac{d\phi}\phi
\\  
  & \times
  \exp\Biggl[- \frac{\Lambda^2}2\left(3\phi^2 + \phi^{-2}\right) x
  - \frac12\phi^2\Lambda^2 (\alpha^2)z^2 \Biggr]
  \phi^{-((\xi - K_X)^2)-(K_X)^2 - 3\chi(\shfO_X)}
\\
  & \times
   \left(
     \sqrt{2} \sqrt{1 - 3\phi^4}
    \right)^{(K_X^2) - \chi(\shfO_X)}
\\
  & \times\Biggl[
    \left(
      \frac1{\sqrt{2}} \phi^{-2}\left(
      \sqrt{1 - \phi^4} - \sqrt{1 - 3\phi^4}
      \right)
    \right)^{(\xi - K_X, \tilde\xi_1)}
\\
  & \qquad \times
   \exp\left(\frac{\Lambda}{\sqrt{2}}
     \phi^{-1}
     \left(
     \sqrt{1 - 3\phi^4}
      (\tilde\xi_1, \alpha)z
    -  
    \sqrt{1-\phi^4}
            (\xi - K_X, \alpha)z
     \right)\right)
\\
  & \quad + (-1)^{(K_X^2)-\chi(\shfO_X)}
    \left(
      \frac1{\sqrt{2}} \phi^{-2}\left(
      \sqrt{1 - \phi^4} {\color{red}+} \sqrt{1 - 3\phi^4}
      \right)
    \right)^{(\xi - K_X, \tilde\xi_1)}
\\
  & \qquad \times
   \exp\left(\frac{\Lambda}{\sqrt{2}}
     \phi^{-1}
     \left(
    {\color{red}-} \sqrt{1 - 3\phi^4}
      (\tilde\xi_1, \alpha)z
    -  
    \sqrt{1-\phi^4}
            (\xi - K_X, \alpha)z
     \right)\right)\Biggr].
  \end{aligned}
  \end{split}
\end{equation}
}
\end{NB}

\begin{NB}
An attempt to re-write \eqref{eq:N1}:

Let
\[
  \lambda \defeq
  \frac1{\sqrt{2}\phi^2}(\sqrt{1-\phi^4}-\sqrt{1-3\phi^4}).
\]
Then
\begin{gather*}
  \lambda^{-1}
  = \frac1{\sqrt{2}\phi^{2}}(\sqrt{1-\phi^4}+\sqrt{1-3\phi^4}),
\quad
  \frac{\lambda + \lambda^{-1}}2
  = \frac1{\sqrt{2}\phi^{2}}\sqrt{1-\phi^4},
\quad
  \frac{-\lambda + \lambda^{-1}}2
  = \frac1{\sqrt{2}\phi^{2}}\sqrt{1-3\phi^4},
\\
  \left(\frac{1 - \lambda^2}{1 + \lambda^2}\right)^2
  = \left(\frac{-\lambda+\lambda^{-1}}{\lambda+\lambda^{-1}}\right)^2
  = \frac{1-3\phi^4}{1-\phi^4},
\quad
  \frac2{2 + (\lambda+\lambda^{-1})^2} = \phi^4.
\end{gather*}

We expand \eqref{eq:N1} as
{\allowdisplaybreaks
\begin{equation}\label{eq:N3}
  \begin{split}
    & \mathcal B(\pm \tilde\xi_1,\xi)
\\
= \; &   
  \begin{aligned}[t]
  & - (-1)^{\frac{(\xi,\xi+K_X)-(K_X^2) \mp (K_X,\tilde\xi_1)}{2} + \chi(\shfO_X)}
  \frac{1 - 3\phi^4}{1 - \phi^4}\frac{d\phi}\phi 
  \phi^{-((\xi - K_X)^2)-(K_X)^2 - 3\chi(\shfO_X)}
\\
  & \times
    \left(
      \frac1{\sqrt{2}} \phi^{-2}\left(
      \sqrt{1 - \phi^4} \mp \sqrt{1 - 3\phi^4}
      \right)
    \right)^{(\xi - K_X, \tilde\xi_1)}
    \left(
     \sqrt{2} \sqrt{1 - 3\phi^4}
    \right)^{(K_X^2) - \chi(\shfO_X)}
\\  
  & \times
  \sum_{2l+k\equiv\dim M_H(y)\bmod 4}
  \left(- \frac{\Lambda^2}2\left(3\phi^2 + \phi^{-2}\right) x
        - \frac12\phi^2\Lambda^2 (\alpha^2)z^2\right)^{l}\frac1{l!}
\\
  & \qquad\times
   \left(\frac{\Lambda}{\sqrt{2}}
     \phi^{-1}
     \left(\pm
     \sqrt{1 - 3\phi^4}
      (\tilde\xi_1, \alpha)z
    -  
    \sqrt{1-\phi^4}
            (\xi - K_X, \alpha)z
     \right)\right)^{k}\frac1{k!}.
  \end{aligned}
\\
  =\; &
  \begin{aligned}[t]
  & - (-1)^{\frac{(\xi,\xi+K_X)-(K_X^2) - (K_X,\tilde\xi_1)}{2} + \chi(\shfO_X)}
  2^{(K_X^2)-\chi_h(X)}
  \frac{1 - 3\phi^4}{1 - \phi^4}\frac{d\phi}\phi 
  \phi^{-((\xi - K_X)^2) + (K_X^2) - 5\chi(\shfO_X) + k + 2l} 
\\
  & \times
  \sum_i \binom{(\xi - K_X, \tilde\xi_1)}{i}
    \left(
      \frac1{\sqrt{2}} \phi^{-2}
      \sqrt{1 - \phi^4}
    \right)^{(\xi - K_X, \tilde\xi_1)-i}
    \left(
      \frac{\mp 1}{\sqrt{2}} \phi^{-2}
      \sqrt{1 - 3\phi^4}
      \right)^{i + (K_X^2) - \chi(\shfO_X)}
\\
  & \times 
  \left.
   \begin{cases}
     1 & \text{for $+$}
\\
    (-1)^{\chi_h(X)} & \text{for $-$}     
   \end{cases}\right\}
\\
  & \times
  \sum_{2l+k\equiv\dim M_H(y)\bmod 4}
  \left(- \frac{\Lambda^2}2\left(3 + \phi^{-4}\right) x
        - \frac12\Lambda^2 (\alpha^2)z^2\right)^{l}\frac1{l!}
\\
  & \qquad\times
  \sum_j \binom{k}{j}
     (\tilde\xi_1, \alpha)^j
     (\xi - K_X, \alpha)^{k-j}
     \left(\pm\frac{\Lambda}{\sqrt{2}}
     \phi^{-2}
     \sqrt{1 - 3\phi^4}
   \right)^j
     \left(-
       \frac{\Lambda}{\sqrt{2}}\phi^{-2}
       \sqrt{1-\phi^4}\right)^{k-j}
     \frac{z^k}{k!}.
  \end{aligned}
  \end{split}
\end{equation}
We} will take the part $2l+k\equiv\dim M_H(y)\bmod 4$, as otherwise the
residue at $\phi=0$ vanishes by the degree reason.
If we consider $\mathcal
B(\tilde\xi_1,\xi)+(-1)^{\chi_h(X)}\mathcal B(-\tilde\xi_1,\xi)$,
we just take the terms with $i+(K_X^2)-\chi_h(X) + j \equiv 0
\bmod 2$. Therefore $\sqrt{1-3\phi^4}$ appears only in even powers. We
also have $(\xi-K_X,\tilde\xi_1)-i+k-j \equiv k + (K_X^2) -\chi_h(X)
+ (\xi-K_X,\tilde\xi_1) \equiv k + \dim M_H(y) \bmod 2$ by
\eqref{eq:sign}. Therefore $\sqrt{1-\phi^4}$ appears only in even
powers in $\mathcal B^{(p)}(\tilde\xi_1,\xi) +
(-1)^{\chi(\shfO_X)}\mathcal B^{(p)}(-\tilde\xi_1,\xi)$
if $p\equiv\dim M_H(y)\bmod 2$.
Moreover we have
\begin{equation*}
  \begin{split}
  & -((\xi-K_X)^2) + (K_X^2) - 5\chi(\shfO_X) + \dim M_H(y)
  \equiv -(\xi^2) + 2(\xi,K_X) - (\xi^2)
\\
 \equiv \; &
 2(\xi,\xi+K_X) \equiv 0 \pmod 4
  \end{split}
\end{equation*}
by Wu's formula (or the adjunction). Therefore $\phi$ appears only in
the form $\phi^4$.
\end{NB}

\begin{Proposition}\label{prop:parity}
  \textup{(1)} The combination
  $\mathcal B^{(p)}(\tilde\xi_1,\xi;a)da
  + (-1)^{\chi_h(X)}\mathcal B^{(p)}(-\tilde\xi_1,\xi;a)da$
  is unchanged under the the sign change of $\sqrt{1-3\phi^4}$.

  \textup{(2)} Suppose that $p \equiv \dim M_H(y)\bmod 2$. Then
  $\mathcal B^{(p)}(\tilde\xi_1,\xi;a)da$ is unchanged under the
  simultaneous sign change of $\sqrt{1-\phi^4}$ and $\sqrt{1-3\phi^4}$.

  In particular, if $p \equiv \dim M_H(y)\bmod 2$, $\mathcal
  B^{(p)}(\tilde\xi_1,\xi;a)da + (-1)^{\chi_h(X)}\mathcal
  B^{(p)}(-\tilde\xi_1,\xi;a)da$ contains even powers of $\sqrt{1 -
    3\phi^4}$ and $\sqrt{1-\phi^4}$, and hence is a rational $1$-form
  in $\phi$.

  \textup{(3)} The expression
$\mathcal B^{(\dim M_H(y))}(\tilde\xi_1,\xi;a)da
  + (-1)^{\chi_h(X)}\mathcal B^{(\dim M_H(y))}(-\tilde\xi_1,\xi;a)da$
  is a rational $1$-form in $\phi^4$.
\end{Proposition}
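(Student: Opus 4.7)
The plan is to analyze how the explicit formula \eqref{eq:N1} for $\mathcal B(\tilde\xi_1,\xi;a)\,da$ transforms under the two natural sign involutions acting on $s_1 \defeq \sqrt{1-\phi^4}$ and $s_3 \defeq \sqrt{1-3\phi^4}$: let $\iota$ denote $s_3 \mapsto -s_3$ (with $s_1$ fixed) and $\iota'$ the simultaneous flip $s_1 \mapsto -s_1$, $s_3 \mapsto -s_3$. Set $A = (\xi-K_X,\tilde\xi_1)$, $B = (\tilde\xi_1,\alpha)$, $C = (\xi-K_X,\alpha)$, $D = (K_X^2) - \chi_h(X)$. I will repeatedly use the SW-simple-type congruence $\tilde\xi_1 \equiv K_X \pmod 2$ (giving $(K_X,\tilde\xi_1) \equiv (K_X^2) \pmod 2$) and Wu's formula $(\xi^2) \equiv (\xi,K_X) \pmod 2$. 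Inspection of \eqref{eq:N1} shows that only three factors involve $s_1$ or $s_3$, namely $\bigl(\tfrac1{\sqrt2}\phi^{-2}(s_1-s_3)\bigr)^A$, the inner exponential with argument $\tfrac{\Lambda}{\sqrt2}\phi^{-1}(s_3Bz - s_1Cz)$, and $(\sqrt2\,s_3)^D$.

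For part (1): the identity $(s_1-s_3)(s_1+s_3) = 2\phi^4$ gives $\bigl(\tfrac1{\sqrt2}\phi^{-2}(s_1+s_3)\bigr)^A = \bigl(\tfrac1{\sqrt2}\phi^{-2}(s_1-s_3)\bigr)^{-A}$, so applying $\iota$ inverts the $A$-factor, flips the sign of the $s_3Bz$ term in the inner exponential (so that $s_3B - s_1C \mapsto -s_3B - s_1C$), and multiplies $(\sqrt2\,s_3)^D$ by $(-1)^D$. On the other hand, the substitution $\tilde\xi_1 \mapsto -\tilde\xi_1$ sends $A \mapsto -A$ and $B \mapsto -B$ (the same effect on the three factors) and, since $(K_X,\tilde\xi_1) \equiv (K_X^2) \pmod 2$, introduces $(-1)^{(K_X^2)}$ in the prefactor. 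Comparing yields
\[
  \mathcal B(\tilde\xi_1,\xi;a)\,da\big|_\iota
   = (-1)^{D-(K_X^2)}\,\mathcal B(-\tilde\xi_1,\xi;a)\,da
   = (-1)^{\chi_h(X)}\,\mathcal B(-\tilde\xi_1,\xi;a)\,da,
\]
which, applied once more to $-\tilde\xi_1$, gives the $\iota$-invariance of the combination; this is preserved by projection to $\mathcal B^{(p)}$ since $\iota$ commutes with the $\Z/4$-action on $(x,z)$.

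For part (2): $\iota'$ multiplies the $A$-factor by $(-1)^A$, multiplies $(\sqrt2\,s_3)^D$ by $(-1)^D$, and changes the inner exponential's argument by an overall sign, which---since the $x$- and $(\alpha^2)z^2$-exponentials are even in $z$---amounts to the substitution $z \mapsto -z$. Hence
\[
  \mathcal B(\tilde\xi_1,\xi;a)(x,z)\big|_{\iota'}
   = (-1)^{A+D}\,\mathcal B(\tilde\xi_1,\xi;a)(x,-z).
\]
Because $z \mapsto -z$ acts on $\mathcal B^{(p)}$ by $(-1)^p$, $\iota'$-invariance reduces to $A+D+p \equiv 0 \pmod 2$. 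Now $A \equiv (\xi-K_X,K_X) = (\xi,K_X) - (K_X^2) \pmod 2$, so $A+D \equiv (\xi,K_X) - \chi_h(X) \equiv (\xi^2) + \chi_h(X) \equiv \dim M_H(y) \pmod 2$ by Wu's formula, and the hypothesis $p \equiv \dim M_H(y) \pmod 2$ gives the required cancellation.

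For part (3): with $p = \dim M_H(y)$ both (1) and (2) apply, so the combination is fixed by $\iota,\iota'$ and hence by $\iota\iota'$ (the flip of $s_1$ alone). Thus only even powers of $s_1,s_3$ survive, so each $z^kx^l$-coefficient is rational in $s_1^2 = 1-\phi^4$ and $s_3^2 = 1-3\phi^4$. It remains to check that the $\phi$-power of each $z^kx^ld\phi$-coefficient is $\equiv 3 \pmod 4$, so that $\phi^P\,d\phi$ becomes a rational multiple of $\phi^{4m}d(\phi^4)$. Collecting the contributions from $d\phi/\phi$, the prefactor $\phi^{-((\xi-K_X)^2)-(K_X^2)-3\chi_h(X)}$, the $\phi^{-2A}$ from the $A$-factor, the $\phi^{-k}$ from the linear $z$-exponential, and the $\phi^{-2l}$ from the leading part of the $x$-exponential (the remaining $\phi$-shifts, coming from $(1+3\phi^4)^l$ and the $(\alpha^2)z^2$-exponential, are multiples of $4$), the total $\phi$-power modulo $4$ is
\[
  -1 - ((\xi-K_X)^2) - (K_X^2) - 3\chi_h(X) - 2A - (k+2l).
\]
Substituting $k+2l \equiv \dim M_H(y) \equiv -(\xi^2) - 3\chi_h(X) \pmod 4$ and expanding $((\xi-K_X)^2) = (\xi^2) - 2(\xi,K_X) + (K_X^2)$, this simplifies to $-1 + 2[(\xi,K_X) - (K_X^2) - A] \pmod 4$; since $A \equiv (\xi,K_X) - (K_X^2) \pmod 2$, the bracket is even and the total is indeed $\equiv 3 \pmod 4$. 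The main obstacle throughout is the meticulous bookkeeping of signs in \eqref{eq:N1}, where every assertion ultimately reduces to a careful combination of $\tilde\xi_1 \equiv K_X \pmod 2$ and Wu's formula.
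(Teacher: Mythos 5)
Your proof is correct and takes essentially the same route as the paper's: parts (1) and (2) identify the sign flips with the substitutions $\tilde\xi_1\mapsto-\tilde\xi_1$ (up to $(-1)^{\chi_h(X)}$) and $z\mapsto-z$ exactly as in the paper, using the identity $(\sqrt{1-\phi^4}-\sqrt{1-3\phi^4})(\sqrt{1-\phi^4}+\sqrt{1-3\phi^4})=2\phi^4$, the congruence $\tilde\xi_1\equiv K_X\pmod 2$ and Wu's formula as in \eqref{eq:sign}. In (3) you count the $\phi$-exponent modulo $4$ directly rather than checking invariance under $\phi\mapsto\sqrt{-1}\phi$ as the paper does, but the congruence you verify is identical to the paper's, so the difference is purely presentational.
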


\begin{proof}
(1) Looking at \eqref{eq:N1}, we see that the replacement of 
$\sqrt{1-3\phi^4}$ by $-\sqrt{1-3\phi^4}$ has the same effect as the
replacement of $\tilde\xi_1$ by $-\tilde\xi_1$ together with the
multiplication by $(-1)^{\chi_h(X)}$, as
\begin{equation*}
  \frac1{\sqrt{2}\phi^{2}}(\sqrt{1-\phi^4}+\sqrt{1-3\phi^4})
  = \left\{
  \frac1{\sqrt{2}\phi^{2}}(\sqrt{1-\phi^4}-\sqrt{1-3\phi^4})
  \right\}^{-1}
\end{equation*}
and
\[
   (-1)^{(K_X,K_X-\tilde\xi_1)/2}
   = (-1)^{(K_X,K_X+\tilde\xi_1)/2} (-1)^{(K_X^2)}.
\]
Therefore the combination $\mathcal B^{(p)}(\tilde\xi_1,\xi;a)da +
(-1)^{\chi_h(X)}\mathcal B^{(p)}(-\tilde\xi_1,\xi;a)da$ is
unchanged.

(2) Looking at \eqref{eq:N1}, we find that the replacement
$\sqrt{1-\phi^4}$, $\sqrt{1-3\phi^4}$ by $-\sqrt{1-\phi^4}$,
$-\sqrt{1-3\phi^4}$ has the same effect as the replacement  of $(x,z)$
by $(x,-z)$ together with the multiplication by
$(-1)^{(\xi-K_X,\tilde\xi_1)+(K_X^2)-\chi_h(X)}$. From the
definition, the first replacement gives the multiplication by
$(-1)^p$. Now the assertion follows from the following:
\begin{equation}\label{eq:sign}
  \begin{split}
  & (\xi-K_X,\tilde\xi_1) + \dim M_H(y)
  \equiv (\xi-K_X,\tilde\xi_1) + (\xi^2) + \chi_h(X)
\\
  \equiv\; & (\xi-K_X,K_X) + (\xi,K_X) + \chi_h(X)
  \equiv (K_X^2) - \chi_h(X) \pmod 2.
  \end{split}
\end{equation}
\begin{NB}
  The first one is the dimension formula. The second one is
  $\tilde\xi_1\equiv K_X \bmod 2$ and Wu's formula `$(\alpha,\alpha)
  \equiv (w_2(X),\alpha)\bmod 2$ for any $\alpha$' (the adjunction for
  complex surfaces). The third one is obvious. This is also true for a
  $C^\infty$ 4-manifold, if we replace $K_X$ by $w_2(X)$.
\end{NB}%

For a later purpose we need a refinement:
\begin{equation}\label{eq:refined}
  \begin{split}
  & (\xi-K_X,\tilde\xi_1) + (K_X^2) + 3\chi_h(X)
  \equiv (\xi-K_X,\tilde\xi_1) + (K_X^2) - (\xi^2) - \dim M_H(y)
  \pmod 4
\\
  =\; & (\xi-K_X,\tilde\xi_1-K_X) + (\xi, K_X - \xi) - \dim M_H(y).
  \end{split}
\end{equation}

(3) Looking at \eqref{eq:N1} again, we find that the replacement of 
$\phi$ by $\sqrt{-1}\phi$ has the same effect as the replacement
$(x,z)$ by $(-x,-\sqrt{-1}z)$ together with the multiplication by
\begin{equation*}
  (-1)^{(\xi-K_X,\tilde\xi_1)}
  (\sqrt{-1})^{-((\xi - K_X)^2)-(K_X)^2 - 3\chi_h(X)}.
\end{equation*}
The first replacement gives the multiplication by $(\sqrt{-1})^{-\dim
  M_H(y)}$. Therefore the assertion follows from
\begin{equation*}
\begin{split}
& -\dim M_H(y) - 2(\xi-K_X,\widetilde{\xi}_1) -
\left\{((\xi-K_X)^2)+(K_X^2)+3\chi_h(X)\right\}\\
\equiv\; &
(\xi^2)+3\chi_h(X)-2(\xi-K_X,K_X)
-\left\{((\xi-K_X)^2)+(K_X^2)+3\chi_h(X)\right\}
\equiv 0 \pmod 4.
\end{split}
\end{equation*}
\end{proof}

\begin{NB}
  By the last argument, if $p\equiv \dim M_H(y) + 2\bmod 4$, then the
  replacement $\phi$ by $\sqrt{-1}\phi$ gives the multiplication by
  $-1$. (Therefore the expression is a rational $1$-form in $\phi^2$.)
  But it means that
  \begin{equation*}
    \Res_{\phi=a} \left[\mathcal B^{(p)}(\tilde\xi_1,\xi)
  + (-1)^{\chi_h(X)}\mathcal B^{(p)}(-\tilde\xi_1,\xi)
    \right]
    =
   - \Res_{\phi=\sqrt{-1}a} \left[\mathcal B^{(p)}(\tilde\xi_1,\xi)
  + (-1)^{\chi_h(X)}\mathcal B^{(p)}(-\tilde\xi_1,\xi)
    \right]
  \end{equation*}
  In particular, the residues at $\phi = \pm 1$, $\pm \sqrt{-1}$ or
  $\phi = \pm 3^{-1/4}$, $\pm \sqrt{-1}3^{-1/4}$ cancel out, and we
  cannot derive any useful information in this case.
\end{NB}

\begin{NB}
Here is an old version:
\begin{Claim}
  Suppose that $\tilde\xi_1$ is a Seiberg-Witten class. 
  Then $\sqrt{1 - 3\phi^4}$ and $\sqrt{1-\phi^4}$ appear as even
  powers in $\mathcal B(\xi_1,\xi)$.
  Thus $\mathcal B(\xi_1,\xi)$ is a rational $1$-form in $\phi$.
\end{Claim}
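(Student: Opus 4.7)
My approach is to exploit the explicit formula \eqref{eq:N1} and analyze how three natural involutions act on it: (i) the sign flip $\sqrt{1-3\phi^4}\mapsto -\sqrt{1-3\phi^4}$, (ii) the simultaneous sign flip of both $\sqrt{1-\phi^4}$ and $\sqrt{1-3\phi^4}$, and (iii) the fourth-root rotation $\phi\mapsto\sqrt{-1}\phi$. In each case I want to identify a compensating transformation (either $\tilde\xi_1\mapsto-\tilde\xi_1$, or $(x,z)\mapsto(x,-z)$, or $(x,z)\mapsto(-x,\sqrt{-1}z)$) so that the composite acts by an easily computed sign or scalar, which I then force to be trivial using arithmetic identities coming from the dimension formula, Wu's formula, and the parity constraint $\tilde\xi_1\equiv K_X\pmod 2$.

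For (1), I will track the flip of $\sqrt{1-3\phi^4}$ through each factor of \eqref{eq:N1}. The key observations are that $(\sqrt{1-\phi^4}-\sqrt{1-3\phi^4})/(\sqrt{2}\phi^2)$ becomes its own reciprocal (multiplied by $-1$), and the $(\tilde\xi_1,\alpha)z$-exponential has its argument negated; together these exactly reproduce the effect of $\tilde\xi_1\mapsto-\tilde\xi_1$ in the main expression. The remaining signs come from the factor $(\sqrt{2}\sqrt{1-3\phi^4})^{(K_X^2)-\chi_h(X)}$, which contributes $(-1)^{(K_X^2)-\chi_h(X)}$, and from the leading sign factor $(-1)^{-(K_X,\tilde\xi_1)/2}$, whose flip under $\tilde\xi_1\mapsto -\tilde\xi_1$ yields $(-1)^{(K_X,\tilde\xi_1)} = (-1)^{(K_X^2)}$ using $\tilde\xi_1\equiv K_X\pmod 2$. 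The product is exactly $(-1)^{\chi_h(X)}$, which is precisely the factor in the combination $\mathcal B^{(p)}(\tilde\xi_1,\xi;a) + (-1)^{\chi_h(X)}\mathcal B^{(p)}(-\tilde\xi_1,\xi;a)$, proving invariance.

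For (2), the same bookkeeping on \eqref{eq:N1} shows that simultaneously flipping both square roots is equivalent to negating $z$ (since $x$, $(\alpha^2)z^2$, and $(3\phi^2+\phi^{-2})x$ are all even in $z$ while the exponential is linear in the two roots and in $z$), times the constant sign $(-1)^{(\xi-K_X,\tilde\xi_1)+(K_X^2)-\chi_h(X)}$ from the two factors raised to integer powers. On the component $\mathcal B^{(p)}$, every monomial $x^l z^k$ satisfies $2l+k\equiv p\pmod 4$, hence $k\equiv p\pmod 2$, so the $z\mapsto-z$ action multiplies by $(-1)^p$. Using the congruence \eqref{eq:sign}, $(\xi-K_X,\tilde\xi_1)+(K_X^2)-\chi_h(X)\equiv\dim M_H(y)\pmod 2$, the total sign becomes $(-1)^{p+\dim M_H(y)}$, which is $+1$ under the hypothesis $p\equiv\dim M_H(y)\pmod 2$. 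Combined with (1), this shows both square roots appear only in even powers, giving rationality in $\phi$.

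For (3), I apply $\phi\mapsto\sqrt{-1}\phi$ to \eqref{eq:N1}, noting that $(\sqrt{-1}\phi)^4=\phi^4$, so both square roots are invariant. The substitution acts as $(-1)$ on $\phi^{\pm 2}$ and as $-\sqrt{-1}$ on $\phi^{-1}$, so it becomes equivalent to $(x,z)\mapsto(-x,\sqrt{-1}z)$ up to a constant scalar $(\sqrt{-1})^{-((\xi-K_X)^2)-(K_X^2)-3\chi_h(X)}\cdot(-1)^{(\xi-K_X,\tilde\xi_1)}$. By the definition of $\mathcal B^{(p)}$ as the $(\sqrt{-1})^p$-eigencomponent of the $\mathbb{Z}/4$ action $(x,z)\mapsto(-x,\sqrt{-1}z)$, the composite equals multiplication of $\mathcal B^{(p)}$ by $(\sqrt{-1})^{p-((\xi-K_X)^2)-(K_X^2)-3\chi_h(X)+2(\xi-K_X,\tilde\xi_1)}$. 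The main arithmetic step, which is the principal obstacle, is to check that this exponent is divisible by $4$ when $p=\dim M_H(y)$; this I will verify by substituting the dimension formula $\dim M_H(y)\equiv-(\xi^2)-3\chi_h(X)\pmod 4$ and using Wu's formula $(\xi,\xi+K_X)\equiv 0\pmod 2$ together with $\tilde\xi_1\equiv K_X\pmod 2$, yielding the congruence modulo $4$ and hence the rationality in $\phi^4$.
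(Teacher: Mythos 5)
Your parts (1) and (2) reproduce the paper's own argument (this is Proposition~\ref{prop:parity}(1),(2) in the text): the flip of $\sqrt{1-3\phi^4}$ is absorbed by $\tilde\xi_1\mapsto-\tilde\xi_1$ together with the factor $(-1)^{\chi_h(X)}$, using that $\tfrac1{\sqrt2\phi^2}(\sqrt{1-\phi^4}+\sqrt{1-3\phi^4})$ is the reciprocal of $\tfrac1{\sqrt2\phi^2}(\sqrt{1-\phi^4}-\sqrt{1-3\phi^4})$ and that $(-1)^{(K_X,\tilde\xi_1)}=(-1)^{(K_X^2)}$; the simultaneous flip is absorbed by $z\mapsto-z$ together with $(-1)^{(\xi-K_X,\tilde\xi_1)+(K_X^2)-\chi_h(X)}$; and \eqref{eq:sign} closes the computation. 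These two steps prove the Claim. Note, as in the paper, that the evenness of the powers of the square roots is really a statement about the symmetrized combination $\mathcal B^{(p)}(\tilde\xi_1,\xi;a)+(-1)^{\chi_h(X)}\mathcal B^{(p)}(-\tilde\xi_1,\xi;a)$ with $p\equiv\dim M_H(y)\bmod 2$, which is how you (correctly) set things up.

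Part (3) of your plan (rationality in $\phi^4$, which goes beyond the Claim but is used later in the paper) contains a sign slip. Since $\phi^{-1}\mapsto(\sqrt{-1})^{-1}\phi^{-1}=-\sqrt{-1}\,\phi^{-1}$, the linear-in-$z$ exponential in \eqref{eq:N1} forces the compensating substitution to be $(x,z)\mapsto(-x,-\sqrt{-1}z)$, i.e.\ the \emph{inverse} of the generator $\rho\colon(x,z)\mapsto(-x,\sqrt{-1}z)$ used to define $\mathcal B^{(p)}$; hence the eigenvalue picked up is $(\sqrt{-1})^{-p}$, not $(\sqrt{-1})^{p}$. Your exponent is therefore off by $2p\bmod 4$, and the congruence you propose to verify, $p-((\xi-K_X)^2)-(K_X^2)-3\chi_h(X)+2(\xi-K_X,\tilde\xi_1)\equiv 0\pmod 4$ with $p=\dim M_H(y)$, fails whenever $\dim M_H(y)$ is odd. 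With the corrected exponent $-p-2(\xi-K_X,\tilde\xi_1)-((\xi-K_X)^2)-(K_X^2)-3\chi_h(X)$ the verification does go through: substituting $-\dim M_H(y)\equiv(\xi^2)+3\chi_h(X)\pmod 4$ and $2(\xi-K_X,\tilde\xi_1)\equiv 2(\xi-K_X,K_X)\pmod 4$ makes it vanish, which is exactly the computation at the end of the paper's proof.
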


\begin{proof}

Let us consider the coefficient of $z^k$ in \eqref{eq:N2}.
\begin{NB2}
Take terms containing square roots:
  \begin{equation*}
    \begin{split}
  & \left(
     \sqrt{1 - 3\phi^4}
    \right)^{(K_X^2) - \chi(\shfO_X)}
\\
  & \times\Bigl[
    \left(
      \sqrt{1 - \phi^4} - \sqrt{1 - 3\phi^4}
    \right)^{(\xi - K_X, \tilde\xi_1)}
\\
  & \qquad \times
     \left(
     \sqrt{1 - 3\phi^4}
      (\tilde\xi_1, \alpha)
    -  
    \sqrt{1-\phi^4}
            (\xi - K_X, \alpha)
     \right)^k
\\
  & \quad + (-1)^{(K_X^2)-\chi(\shfO_X)}
    \left(
      \sqrt{1 - \phi^4} {\color{red}+} \sqrt{1 - 3\phi^4}
    \right)^{(\xi - K_X, \tilde\xi_1)}
\\
  & \qquad \times
     \left(
    {\color{red}-} \sqrt{1 - 3\phi^4}
      (\tilde\xi_1, \alpha)
    -  
    \sqrt{1-\phi^4}
            (\xi - K_X, \alpha)
     \right)^k\Bigr]
\\
=\; &
    \left(
     \sqrt{1 - 3\phi^4}
    \right)^{(K_X^2) - \chi(\shfO_X)} \sum (\tilde\xi_1,\alpha)^{k-j}
    (\xi-K_X,\alpha)^j
\\
   &\times\Biggl[
    \binom{(\xi-K_X,\tilde\xi_1)}{i}\binom{k}{j}
         (-1)^{i+j}\sqrt{1 - 3\phi^4}^{i+k-j}
         \sqrt{1 - \phi^4}^{(\xi-K_X,\tilde\xi_1)-i+j}
\\
   &\qquad +
   (-1)^{(K_X^2)-\chi(\shfO_X)}
    \binom{(\xi-K_X,\tilde\xi_1)}{i}\binom{k}{j}
         (-1)^{k}\sqrt{1 - 3\phi^4}^{i+k-j}
         \sqrt{1 - \phi^4}^{(\xi-K_X,\tilde\xi_1)-i+j}\Biggr]
\\
=\; &
  \sum (\tilde\xi_1,\alpha)^{k-j}
    (\xi-K_X,\alpha)^j
    \Biggl[
    \binom{(\xi-K_X,\tilde\xi_1)}{i}\binom{k}{j}
    \sqrt{1 - 3\phi^4}^{i+k-j+(K_X^2)-\chi(\shfO_X)}
         \sqrt{1 - \phi^4}^{(\xi-K_X,\tilde\xi_1)-i+j}
\\
   &\qquad\qquad\times
         \left(  (-1)^{i+j} - (-1)^{(K_X^2)-\chi(\shfO_X)+k}\right)\Biggr].
    \end{split}
  \end{equation*}
\end{NB2}%
If we change the sign of $\sqrt{1-3\phi^4}$, the coefficient is
multiplied by
\(
  (-1)^{2\{(K_X^2)-\chi(\shfO_X)\}} = 1.
\)
If we change the sign of $\sqrt{1-\phi^4}$, the coefficient is
multiplied by
\(
  (-1)^{k + (\xi-K_X, \tilde\xi_1) + (K_X^2) - \chi(\shfO_X)} = 1
\)
by \eqref{eq:sign}.
\begin{NB2}
  Alternative argument:

  The term in the above sum is nonzero only if
  $i+j\equiv k + (K_X^2) -\chi(\shfO_X)\bmod 2$. Therefore
  $\sqrt{1-3\phi^4}$ appears only in even powers. We also have
  $(\xi-K_X,\tilde\xi_1)-i+j \equiv k + (K_X^2) -\chi(\shfO_X)
  + (\xi-K_X,\tilde\xi_1) \equiv 0 \bmod 2$ by \eqref{eq:sign}. Therefore
  $\sqrt{1-\phi^4}$ appears only in even powers.
\end{NB2}%
Therefore the above expression contains only even powers
of $\sqrt{1-3\phi^4}$ and $\sqrt{1-\phi^4}$. Thus it is
a rational $1$-form in $\phi$.
\end{proof}
\end{NB}

\begin{NB}
The following is not necessary, but easier to start.
\begin{Claim}
  Suppose that $\tilde\xi_1 = 0$ is a Seiberg-Witten class. Then
  $\sqrt{1 - 3\phi^4}$ and $\sqrt{1-\phi^4}$ appear as even powers in
  $\mathcal B(\xi_1,\xi)$.
  Thus $\mathcal B(\xi_1,\xi)$ is a rational $1$-form in $\phi$.
\end{Claim}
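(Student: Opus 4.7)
The plan is to read each assertion off the explicit formula \eqref{eq:N1} by analysing three natural symmetries in $\phi$: the sign change $\sigma$ of $\sqrt{1-3\phi^4}$ alone, the simultaneous sign change $\tau$ of both $\sqrt{1-\phi^4}$ and $\sqrt{1-3\phi^4}$, and the rotation $\rho\colon\phi\mapsto\sqrt{-1}\phi$. The algebraic backbone will be
\[
(\sqrt{1-\phi^4}-\sqrt{1-3\phi^4})(\sqrt{1-\phi^4}+\sqrt{1-3\phi^4})=2\phi^4,
\]
which says that $\sigma$ inverts the base $\tfrac{1}{\sqrt 2}\phi^{-2}(\sqrt{1-\phi^4}-\sqrt{1-3\phi^4})$ of the $(\xi-K_X,\tilde\xi_1)$-factor. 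Since the $\phi$-dependence is disjoint from the $(x,z)$-dependence, each of $\sigma$, $\tau$, $\rho$ commutes with the projection $\mathcal B\mapsto\mathcal B^{(p)}$.

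For (1) I will substitute $\sqrt{1-3\phi^4}\to-\sqrt{1-3\phi^4}$ term by term in \eqref{eq:N1}: the $(\xi-K_X,\tilde\xi_1)$-factor inverts, the exponential flips the coefficient of $(\tilde\xi_1,\alpha)z$, and the residual factor $(\sqrt 2\sqrt{1-3\phi^4})^{(K_X^2)-\chi_h(X)}$ picks up $(-1)^{(K_X^2)-\chi_h(X)}$. The first two effects match exactly those of $\tilde\xi_1\to-\tilde\xi_1$; the latter substitution additionally flips the sign prefactor by $(-1)^{(K_X,\tilde\xi_1)}=(-1)^{(K_X^2)}$ since $\tilde\xi_1\equiv K_X\bmod 2$. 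Combining gives $\sigma(\mathcal B(\tilde\xi_1))=(-1)^{\chi_h(X)}\mathcal B(-\tilde\xi_1)$, from which (1) is immediate by applying $\sigma$ to the combination.

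For (2), under $\tau$ the $(\xi-K_X,\tilde\xi_1)$-factor scales by $(-1)^{(\xi-K_X,\tilde\xi_1)}$, the linear-in-$z$ piece of the exponential flips (equivalent to $z\to-z$, because the $z^2$ summand is square-root free and even in $z$), and the residual factor contributes $(-1)^{(K_X^2)-\chi_h(X)}$. On $\mathcal B^{(p)}$ the substitution $z\to-z$ acts by $(-1)^p$ since the grading condition $k+2l\equiv p\bmod 4$ forces $k\equiv p\bmod 2$, so the total scalar is $(-1)^{(\xi-K_X,\tilde\xi_1)+(K_X^2)-\chi_h(X)+p}$. The parity identity $(\xi-K_X,\tilde\xi_1)+\dim M_H(y)\equiv(K_X^2)-\chi_h(X)\bmod 2$, which follows from Wu's formula and the dimension formula for $M_H(y)$, makes this $1$ precisely when $p\equiv\dim M_H(y)\bmod 2$. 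Combining (1) and (2) at $p=\dim M_H(y)$ forces both square roots to appear only in even powers in the desired combination, giving rationality in $\phi$.

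For (3) the rotation $\rho$ fixes $\sqrt{1-\phi^4}$, $\sqrt{1-3\phi^4}$, and $d\phi/\phi$, while sending $\phi^{-2}\mapsto-\phi^{-2}$ and $\phi^{-1}\mapsto-\sqrt{-1}\phi^{-1}$; tracing through \eqref{eq:N1} this amounts to $(x,z)\mapsto(-x,-\sqrt{-1}z)$, which is the $q=3$ case of the $\mathcal B^{(p)}$-averaging and thus scales $\mathcal B^{(p)}$ by $(\sqrt{-1})^{3p}$. Together with $(\sqrt{-1})^{-((\xi-K_X)^2)-(K_X^2)-3\chi_h(X)}$ from the explicit $\phi$-power and $(-1)^{(\xi-K_X,\tilde\xi_1)}$ from the $\phi^{-2}$ inside the $(\xi-K_X,\tilde\xi_1)$-factor, the total scalar is $(\sqrt{-1})^N$ with $N\equiv 2(\xi-K_X,\tilde\xi_1)-((\xi-K_X)^2)-(K_X^2)+(\xi^2)\bmod 4$; expanding $(\xi-K_X)^2$ and using $\tilde\xi_1+K_X=2\xi_1$ this collapses to $4(\xi-K_X,\xi_1)\equiv 0$, so $\rho$ acts trivially and the rational $1$-form from (2) in fact depends only on $\phi^4$. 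The main obstacle will be the careful $\bmod 2$ and $\bmod 4$ parity bookkeeping; once $\sigma$ is recognised as the involution $\tilde\xi_1\leftrightarrow-\tilde\xi_1$ weighted by $(-1)^{\chi_h(X)}$, the remaining content is the parity identity above combined with Wu's formula.
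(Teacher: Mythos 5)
Your three symmetries $\sigma$, $\tau$, $\rho$ and the parity bookkeeping behind them are correct, and they reproduce, essentially verbatim, the paper's proof of the general statement (Proposition~\ref{prop:parity}): $\sigma$ is recognised as $\tilde\xi_1\mapsto-\tilde\xi_1$ weighted by $(-1)^{\chi_h(X)}$ via the identity $(\sqrt{1-\phi^4}-\sqrt{1-3\phi^4})(\sqrt{1-\phi^4}+\sqrt{1-3\phi^4})=2\phi^4$, part (2) rests on \eqref{eq:sign}, and your mod $4$ computation for $\rho$ collapsing to $4(\xi-K_X,\xi_1)$ matches \eqref{eq:refined} and the final congruence in the paper. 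So as a proof of the general parity proposition your argument is fine and is the same route the paper takes.

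However, as a proof of the Claim actually posed — the special case $\tilde\xi_1=0$, with the conclusion about $\mathcal B(\xi_1,\xi)$ \emph{itself} — there is a genuine gap: you never use the hypothesis that $\tilde\xi_1=0$ is a Seiberg--Witten class, and it is load-bearing. Your symmetry argument controls only the combination $\mathcal B(\tilde\xi_1,\xi;a)+(-1)^{\chi_h(X)}\mathcal B(-\tilde\xi_1,\xi;a)$, which for $\tilde\xi_1=0$ equals $\bigl(1+(-1)^{\chi_h(X)}\bigr)\mathcal B(0,\dots)$; if $\chi_h(X)$ were odd this vanishes identically and yields no information, and indeed your own identity $\sigma(\mathcal B(0))=(-1)^{\chi_h(X)}\mathcal B(0)$ would then say $\sqrt{1-3\phi^4}$ occurs in \emph{odd} powers, contradicting the Claim. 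The missing step is to observe that $\SW(-\fs)=(-1)^{\chi_h(X)}\SW(\fs)$ together with $\SW(0)\neq 0$ forces $\chi_h(X)$ to be even, and that SW-simple type gives $(K_X^2)=(\tilde\xi_1^2)=0$. Once these are in hand the Claim follows much more directly than through the symmetry machinery: with $\tilde\xi_1=0$ the only source of $\sqrt{1-3\phi^4}$ in \eqref{eq:N1} is the factor $(\sqrt{2}\sqrt{1-3\phi^4})^{(K_X^2)-\chi_h(X)}$, whose exponent is now even, while $\sqrt{1-\phi^4}$ enters only through the coefficient of $z^k$, and the cohomological degree constraint $k+2l=\dim M_H(y)\equiv -(\xi^2)-3\chi_h(X)\equiv 0\pmod 2$ forces $k$ even. (Note also that this degree restriction is needed for the statement about $\sqrt{1-\phi^4}$ to be true at all; your restriction to $\mathcal B^{(p)}$ with $p\equiv\dim M_H(y)\bmod 2$ plays the same role.)
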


\begin{proof}
Then
$(K_X^2) = (\tilde\xi_1^2) = 0$. Moreover, we have
$\chi(\shfO_X)$ is even, as
\(
   \SW(\tilde\xi_1) = (-1)^{\chi(\shfO_X)} \SW(-\tilde\xi_1).
\)
Therefore $(K_X^2) - \chi(\shfO_X)$ is even, and
$\sqrt{1-3\phi^4}$ appear in an even power.

Let us expand Mochizuki's formula (5.11) into a formal power series in
$z$, $x$, $\Lambda$ as $\sum_{k,l,n} M_{k,l,m} z^k x^l \Lambda^{\dim
  M_H(y)}$. Then we must have $k + 2l = \dim M_H(y)$ by the cohomology
degree reason. We have
\(
  (\xi^2) \equiv 0 \bmod 2
\)
from \eqref{eq:sign}, and hence $\dim M_H(y) \equiv -(\xi^2) -
3\chi(\shfO_X) \equiv (\xi^2)\equiv 0 \bmod 2$. Therefore $k$ must be
even. Looking at \eqref{eq:N1} we find that $\sqrt{1-\phi^4}$ appears
in an even power.
\end{proof}
\end{NB}

From the form of $\mathcal B(\tilde\xi_1,\xi;a)da$ in \eqref{eq:N1},
we find that the differential \eqref{eq:interested} has poles possibly
only at $\phi^4=0$, $\infty$, ${1}$ and ${\nicefrac13}$.
Mochizuki's formula is given by the residue at $\phi^4=0$.
The power of $\phi$, containing $-(\xi-K_X)^2$ is {\it very\/}
negative since $\xi$ is sufficiently ample when we apply Mochizuki's
formula to compute Donaldson invariants. Therefore it is not so easy
to compute the residue at $\phi^4=0$ directly.
Therefore we use the residue theorem
\begin{equation*}
  \left(
  \Res_{\phi^4=0} + \Res_{\phi^4=\infty} + \Res_{\phi^4=1} + \Res_{\phi^4=1/3}
  \right)
  \left[
    \text{the differential \eqref{eq:interested}}
  \right]= 0,
\end{equation*}
to compute residues at $\infty$, $1$, $1/3$ instead.

\subsection{Residue at $\phi=\infty$}

We first treat the simplest (possible) pole $\phi=\infty$.
Recall that we expand $\mathcal B(\tilde\xi_1,\xi;a)da$ as formal
power series in $x$, $z$ and take coefficients of $x^k z^l$ with $k+2l
= 4n - (\xi^2) - 3\chi_h(X) = \dim M_H(y)$.
Let us denote this part as $\mathcal B^{[\dim
  M_H(y)]}(\tilde\xi_1,\xi;a)da$.
\begin{NB}
  I am not sure this is a good notation. Before we considered $\dim
  M_H(y)$ modulo $4$, but not this time.
\end{NB}%
The residue at $\phi^4=0$ is the same as that of $\widetilde{\mathcal
  A}(\xi_1,y;a)$ by the cohomological degree reason, but it is not
equal to $\widetilde{\mathcal A}(\xi_1,y;a)$ itself as we still take
the sum over all $n$.
Recall that when we use Mochizuki's formula in \thmref{thm:Mochizuki},
we expand $\mathcal B^{(\dim M_H(y))}(\tilde\xi_1,\xi;a)da$ in $x$,
$z$, compute the residue at $\phi^4=0$, and then take the sum over
$y$. Thus we actually need to compute the residue of $\mathcal
B^{[\dim M_H(y)]}(\tilde\xi_1,\xi;a)da$.

\begin{Proposition}
  $\mathcal B^{[\dim
    M_H(y)]}(\tilde\xi_1,\xi;a)da+(-1)^{\chi_h(X)}\mathcal B^{[\dim
    M_H(y)]}(-\tilde\xi_1,\xi;a)da$ is regular at $\phi^4=\infty$, if
  $\chi(y) > 0$.
\end{Proposition}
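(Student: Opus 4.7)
The plan is a direct pole-order estimate at $\phi=\infty$ applied to the explicit formula \eqref{eq:N1}. By Proposition~\ref{prop:parity}(3), each coefficient of $x^l z^k$ with $k+2l=\dim M_H(y)$ in the symmetrized combination is a rational function of $\phi^4$ (the square roots $\sqrt{1-\phi^4}$ and $\sqrt{1-3\phi^4}$ disappear after the $\tilde\xi_1\leftrightarrow -\tilde\xi_1$ symmetrization). Since a rational differential $\phi^M\,d\phi$ is regular at $\phi=\infty$ iff $M\le -2$, it suffices, for each such $(l,k)$, to bound the top $\phi$-degree at $\infty$ of the coefficient of $d\phi$.

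I would then enumerate each factor in \eqref{eq:N1} at $\phi=\infty$, using that $\sqrt{1-c\phi^4}\sim\phi^2$ for any constant $c$. The rational prefactor $\frac{1-3\phi^4}{1-\phi^4}$ is bounded, and $\frac{d\phi}{\phi}$ contributes degree $-1$ to the coefficient of $d\phi$. The monomial $\phi^{-((\xi-K_X)^2)-(K_X^2)-3\chi_h(X)}$ contributes its named degree. The factor $(\sqrt 2\sqrt{1-3\phi^4})^{(K_X^2)-\chi_h(X)}$ contributes $2((K_X^2)-\chi_h(X))$, while $\bigl(\tfrac{1}{\sqrt 2}\phi^{-2}(\sqrt{1-\phi^4}-\sqrt{1-3\phi^4})\bigr)^{(\xi-K_X,\tilde\xi_1)}$ is $O(1)$ because $\phi^{-2}\sqrt{1-c\phi^4}=O(1)$. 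Expanding the first exponential, its $x^l z^{k_1}$-coefficient (with $k_1$ even) has top $\phi$-degree $2l+k_1$; expanding the second exponential, whose summand factors are $\phi^{-1}\sqrt{1-c\phi^4}=O(\phi)$, the coefficient of $z^{k_2}$ has top degree $k_2$. Summing all contributions with $k=k_1+k_2$ and $k+2l=\dim M_H(y)$, the top $\phi$-degree of the coefficient of $d\phi$ is at most
\[
-1 + \dim M_H(y) + (K_X^2) - 5\chi_h(X) - ((\xi-K_X)^2),
\]
which by the identity \eqref{eq:chiy} simplifies to $-1-4\chi(y)$.

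Hence under $\chi(y)>0$, i.e.\ $\chi(y)\ge 1$, the top degree is $\le -5\le -2$, proving regularity at $\phi=\infty$. The argument is essentially bookkeeping: the only point requiring care is the correct identification of the $\phi$-degree of each factor of \eqref{eq:N1}, and one uses only upper bounds on each factor (potential cancellations between summands can only further lower the top degree, which is harmless for our purpose). The arithmetic input that turns the combinatorial dimension count into the Euler characteristic $\chi(y)$ is precisely the identity \eqref{eq:chiy}; the reason the hypothesis $\chi(y)>0$ appears at all is exactly this conversion, which is why the same condition shows up in the hypotheses of Mochizuki's formula in \thmref{thm:Mochizuki}.
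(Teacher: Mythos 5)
Your proof is correct and is essentially the paper's own argument: a factor-by-factor order count at $\phi=\infty$ in \eqref{eq:N1}, with the resulting total converted into $4\chi(y)$ via the identity \eqref{eq:chiy}. The only difference is presentational --- the paper records the order of vanishing of the differential as at least $4\chi(y)-1$, whereas you bound the top $\phi$-degree of the coefficient of $d\phi$ by $-1-4\chi(y)$; these are the same statement.
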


\begin{proof}
  Terms appearing in \eqref{eq:N1} have the following order of
  vanishing at $\phi = \infty$:
\begin{gather*}
  \operatornamewithlimits{Order}_{\phi=\infty}({\phi})=-1,\quad
  \operatornamewithlimits{Order}_{\phi=\infty}(\frac{d\phi}{\phi})=-1,\quad\
  \operatornamewithlimits{Order}_{\phi=\infty}(3\phi^2+\phi^{-2})^k = -2k,\\
  \operatornamewithlimits{Order}_{\phi=\infty}(\phi^{-1}\sqrt{1-\phi^4})^l = -l,\quad
  \operatornamewithlimits{Order}_{\phi=\infty}(\phi^{-1}\sqrt{1-3\phi^4})^l
    = -l,
\\
  \operatornamewithlimits{Order}_{\phi=\infty}(\sqrt{1-3\phi^4})
    = -2
\end{gather*}
Therefore  $\mathcal B^{[\dim
    M_H(y)]}(\tilde\xi_1,\xi)+(-1)^{\chi_h(X)}\mathcal B^{[\dim
    M_H(y)]}(-\tilde\xi_1,\xi)$ has zero of order at least
\begin{multline*}
-1+[((\xi-K_X)^2)+(K_X^2)+3\chi({\cal O}_X)]
-\dim M_H(y)-2(K_X^2)+2\chi({\cal O}_X)\\
=(\xi,\xi-2K_X)+5\chi({\cal O}_X)-\dim M_H(y)-1.
\end{multline*}
This is equal to $4\chi(y) - 1$. The assertion follows.
\end{proof}

\subsection{Residue at $\phi^4=1$}

Next we study the residue at $\phi^4=1$. We will show that it is
identified with Witten's formula.

\begin{NB}
  The following is far from complete.

The residue at $\phi = (\sqrt{-1})^k$ ($k=0,1,2,3$) is
\begin{equation*}
  \begin{split}
    &
  \begin{aligned}[t]
  & -(-1)^{\frac{(\xi,\xi+K_X)-(K_X^2) - (K_X,\tilde\xi_1)}{2}}
  \frac{2}{\prod_{l\neq k}((\sqrt{-1})^k-(\sqrt{-1})^l)} \frac1{(\sqrt{-1})^k}
\\  
  & \times
  \exp\Biggl[- 2{\Lambda^2}(-1)^k x
  - \frac12 (-1)^k\Lambda^2 (\alpha^2)z^2 \Biggr]
  (\sqrt{-1})^{-k\{((\xi - K_X)^2)+(K_X^2)+3\chi(\shfO_X)\}}
\\
  & \times\left(-\sqrt{2}\sqrt{-2}\right)^{(K_X^2)-\chi(\shfO_X)}
\\
  & \times\Biggl[
    \left(
      \frac1{\sqrt{2}} (-1)^{k}\left(
      - \sqrt{-2}
      \right)
    \right)^{(\xi - K_X, \tilde\xi_1)}
\\
  & \qquad \times
   \exp\left(\frac{\Lambda}{\sqrt{2}}
     (\sqrt{-1})^{-k}
     \sqrt{-2}
      (\tilde\xi_1, \alpha)z
    \right)
\\
  & \quad + (-1)^{(K_X^2)-\chi(\shfO_X)}
    \left(
      \frac1{\sqrt{2}} (-1)^{k} (\sqrt{- 2})
    \right)^{(\xi - K_X, \tilde\xi_1)}
\\
  & \qquad \times
   \exp\left(\frac{\Lambda}{\sqrt{2}}
     (\sqrt{-1})^{-k}
     \left(
    -  \sqrt{- 2}
      (\tilde\xi_1, \alpha)z
     \right)\right)\Biggr]
  \end{aligned}
\\
  =\; &
  \begin{aligned}[t]
  & - \frac12 (-1)^{\frac{(\xi,\xi+K_X)-(K_X^2) - (K_X,\tilde\xi_1)}{2}}
  \,2^{(K_X^2)-\chi(\shfO_X)}
\\  
  & \times
  \exp\Biggl[- 2{\Lambda^2}(-1)^k x
  - \frac12 (-1)^k\Lambda^2 (\alpha^2)z^2 \Biggr]
  (\sqrt{-1})^{-k\{((\xi - K_X)^2)+(K_X)^2+3\chi(\shfO_X)\}}
\\
  & \times
    (-1)^{k(\xi - K_X, \tilde\xi_1)}
    \left(\sqrt{- 1}\right)^{(\xi - K_X, \tilde\xi_1)+(K_X^2)-\chi(\shfO_X)}
\\
  & \times\Biggl[
      \left(
      - 1
      \right)^{(\xi - K_X, \tilde\xi_1)+(K_X^2)-\chi(\shfO_X)}
   \exp\left({\Lambda}
     (\sqrt{-1})^{-k+1}
      (\tilde\xi_1, \alpha)z
    \right)
\\
  & \qquad + 
   \exp\left({\Lambda}
     (\sqrt{-1})^{-k+3}
      (\tilde\xi_1, \alpha)z\right)\Biggr].
  \end{aligned}
  \end{split}
\end{equation*}
\begin{NB2}
We have
\begin{equation*}
  \prod_{l\neq k}((\sqrt{-1})^k-(\sqrt{-1})^l)
  = (\sqrt{-1})^{3k} (1 - i)(1 + 1)(1 + i)
  = 4(\sqrt{-1})^{3k}.
\end{equation*}
The mistake was correct on Dec.\ 15.
\end{NB2}%
By \eqref{eq:sign}
\(
   (-1)^{(\xi - K_X, \tilde\xi_1)+(K_X^2)-\chi(\shfO_X)}
\)
can be replace by
\(
   (-1)^{\dim M_H(y)}.
\)
Note also
\begin{equation*}
  \begin{split}
  & ((\xi - K_X)^2)+(K_X^2)+3\chi(\shfO_X)
  = (\xi^2)+3\chi(\shfO_X) - 2(\xi,K_X) + 2(K_X^2)
\\
  \equiv \; &
  - \dim M_H(y) - 2(\xi,K_X) + 2(K_X^2)
  \bmod 4
  \end{split}
\end{equation*}
Therefore
\begin{equation*}
  (\sqrt{-1})^{-k\{((\xi - K_X)^2)+(K_X)^2+3\chi(\shfO_X)\}}
  =  (\sqrt{-1})^{k\dim M_H(y)}(-1)^{k\{(\xi,K_X)+(K_X^2)\}}.
\end{equation*}
The second term $(-1)^{k\{(\xi,K_X)+(K_X^2)\}}$ cancels with
$(-1)^{k(\xi - K_X, \tilde\xi_1)}$ since $\tilde\xi_1\equiv K_X\bmod 2$.

We also have
\begin{equation*}
  (\xi-K_X,\tilde\xi_1) + (K_X^2) - \chi(\shfO_X)
  \equiv (\xi-K_X,\tilde\xi_1-K_X) + (\xi,K_X-\xi) - \dim M_H(y)
  \bmod 4
\end{equation*}
by \eqref{eq:refined}.

Therefore we get
\begin{equation*}
    \begin{aligned}[t]
  & - \frac12 (-1)^{\frac{(\xi,\xi+K_X)-(K_X^2) - (K_X,\tilde\xi_1)}{2}}
  \,2^{(K_X^2)-\chi(\shfO_X)}
\\  
  & \times
  \exp\Biggl[- 2{\Lambda^2}(-1)^k x
  - \frac12 (-1)^k\Lambda^2 (\alpha^2)z^2 \Biggr]
  (\sqrt{-1})^{k\dim M_H(y)}
\\
  & \times
    \left(\sqrt{- 1}\right)^{-\dim M_H(y)}
    (-1)^{(\xi-K_X,\frac{\tilde\xi_1-K_X}2) + \frac{(\xi, K_X - \xi)}2}
\\
  & \times\Biggl[
      \left(
      - 1
      \right)^{\dim M_H(y)}
   \exp\left({\Lambda}
     (\sqrt{-1})^{-k+1}
      (\tilde\xi_1, \alpha)z
    \right)
\\
  & \qquad + 
   \exp\left({\Lambda}
     (\sqrt{-1})^{-k+3}
      (\tilde\xi_1, \alpha)z\right)\Biggr].
  \end{aligned}
\end{equation*}
We have
\begin{equation*}
  \begin{split}
  & - \frac{(K_X,K_X + \tilde\xi_1)}2
  + (\xi-K_X, \frac{\tilde\xi_1 - K_X}2)
  + \frac{(\xi,K_X - \xi)}2
  = - \frac{(\xi,\xi-\tilde\xi_1)}2 - (K_X,\tilde\xi_1)
\\
  \equiv\; &
  \frac{(\xi,\xi-\tilde\xi_1)}2 - (K_X^2)\bmod 2.
  \end{split}
\end{equation*}
Therefore
\begin{equation*}
    \begin{aligned}[t]
  & - \frac12 (-1)^{\frac{(\xi,\xi+K_X)+(\xi,\xi-\tilde\xi_1)}{2}}
  \,2^{(K_X^2)-\chi(\shfO_X)} (-1)^{(K_X)^2}
\\  
  & \times
  \exp\Biggl[- 2{\Lambda^2}(-1)^k x
  - \frac12 (-1)^k\Lambda^2 (\alpha^2)z^2 \Biggr]
  (\sqrt{-1})^{k\dim M_H(y)}
\\
  & \times
    \left(\sqrt{- 1}\right)^{-\dim M_H(y)}
\\
  & \times\Biggl[
      \left(
      - 1
      \right)^{\dim M_H(y)}
   \exp\left({\Lambda}
     (\sqrt{-1})^{-k+1}
      (\tilde\xi_1, \alpha)z
    \right)
\\
  & \qquad + 
   \exp\left({\Lambda}
     (\sqrt{-1})^{-k+3}
      (\tilde\xi_1, \alpha)z\right)\Biggr].
  \end{aligned}
\end{equation*}
The residue is the same for $k=0$, $2$ and $k=1$, $3$. For $k=0$ and
$1$, the value are related as
\begin{equation*}
  \Res_{\phi=\sqrt{-1}}
    = 
    (\sqrt{-1})^{-\dim M_H(y)}
  \left.\Res_{\phi=1}\right|_{\substack{x\mapsto -x\\z\mapsto \sqrt{-1}z}}.
\end{equation*}
\begin{NB2}
Since the sum is expressed as $\sum M_{k,l,n}z^k x^l \Lambda^{\dim
  M_H(y)}$ with $k+2l= \dim M_H(y)$, we must have
\[
  \left.(\text{ans.})\right|_{\substack{x\mapsto -x\\z\mapsto \sqrt{-1}z}}
  = (\sqrt{-1})^{\dim M_H(y)}(\text{ans}).
\]
Since
\begin{equation*}
  (\text{ans.}) = -2\left(\Res_{\phi=1} + \Res_{\phi=\sqrt{-1}}\right),
\end{equation*}
the above property is compatible.
\end{NB2}

*********************************************************

The residue of $\mathcal B(\tilde\xi_1,\xi) +
(-1)^{\chi(\shfO_X)} \mathcal B(-\tilde\xi_1,\xi)$ at $\phi = 1$
  is given by
\begin{equation*}
  \begin{split}
    &
  \begin{aligned}[t]
  & - \frac12 (-1)^{\frac{(\xi,\xi+K_X)-(K_X^2) - (K_X,\tilde\xi_1)}{2} + \chi_h(X)}
  \exp\Biggl[- 2{\Lambda^2} x
  - \frac12 \Lambda^2 (\alpha^2)z^2 \Biggr]
  \left(\sqrt{2}\sqrt{-2}\right)^{(K_X^2)-\chi(\shfO_X)}
\\
  & \times\Biggl[
    \left(
      \frac1{\sqrt{2}} \left(
      - \sqrt{-2}
      \right)
    \right)^{(\xi - K_X, \tilde\xi_1)}
   \exp\left(\frac{\Lambda}{\sqrt{2}}
     \sqrt{-2}
      (\tilde\xi_1, \alpha)z
    \right)
\\
  & \qquad + (-1)^{(K_X^2)-\chi(\shfO_X)}
    \left(
      \frac1{\sqrt{2}} (\sqrt{- 2})
    \right)^{(\xi - K_X, \tilde\xi_1)}
   \exp\left(\frac{\Lambda}{\sqrt{2}}
     \left(
    -  \sqrt{- 2}
      (\tilde\xi_1, \alpha)z
     \right)\right)\Biggr]
  \end{aligned}
\\
  =\; &
  \begin{aligned}[t]
  & - \frac12 (-1)^{\frac{(\xi,\xi+K_X)-(K_X^2) - (K_X,\tilde\xi_1)}{2}}
  \,2^{(K_X^2)-\chi(\shfO_X)}
  \exp\Biggl[- 2{\Lambda^2} x
  - \frac12 \Lambda^2 (\alpha^2)z^2 \Biggr]
\\
  & \times
    \left(\sqrt{- 1}\right)^{(\xi - K_X, \tilde\xi_1)+(K_X^2)-\chi(\shfO_X)}
    (-1)^{(K_X^2)}
\\
  & \times\Biggl[
      \left(
      - 1
      \right)^{(\xi - K_X, \tilde\xi_1)+(K_X^2)-\chi(\shfO_X)}
   \exp\left({\Lambda}
     \sqrt{-1}
      (\tilde\xi_1, \alpha)z
    \right)
  + 
   \exp\left(-{\Lambda}\sqrt{-1}
      (\tilde\xi_1, \alpha)z\right)\Biggr].
  \end{aligned}
  \end{split}
\end{equation*}
By \eqref{eq:sign}
\(
   (-1)^{(\xi - K_X, \tilde\xi_1)+(K_X^2)-\chi(\shfO_X)}
\)
can be replace by
\(
   (-1)^{\dim M_H(y)}.
\)
Note also
\begin{equation*}
  (\xi-K_X,\tilde\xi_1) + (K_X^2) - \chi(\shfO_X)
  \equiv (\xi-K_X,\tilde\xi_1-K_X) + (\xi,K_X-\xi) - \dim M_H(y)
  \bmod 4
\end{equation*}
by \eqref{eq:refined}. Then we have
\begin{equation*}
  \begin{split}
  & - \frac{(K_X,K_X + \tilde\xi_1)}2
  + (\xi-K_X, \frac{\tilde\xi_1 - K_X}2)
  + \frac{(\xi,K_X - \xi)}2
  = - \frac{(\xi,\xi-\tilde\xi_1)}2 - (K_X,\tilde\xi_1)
\\
  \equiv\; &
  \frac{(\xi,\xi-\tilde\xi_1)}2 - (K_X^2)\pmod 2.
  \end{split}
\end{equation*}
Hence we have
\begin{equation}
  \begin{split}
  & \Res_{\phi=1}\left[\mathcal B(\tilde\xi_1,\xi) +
(-1)^{\chi(\shfO_X)} \mathcal B(-\tilde\xi_1,\xi)\right]
\\
 =\; &
     \begin{aligned}[t]
  & - \frac12 (-1)^{\frac{(\xi,\xi+K_X)+(\xi,\xi-\tilde\xi_1)}{2}}
  \,2^{(K_X^2)-\chi(\shfO_X)} 
  \exp\Biggl[- 2{\Lambda^2} x
  - \frac12 \Lambda^2 (\alpha^2)z^2 \Biggr]
\\
   & \times
    \left(\sqrt{- 1}\right)^{-\dim M_H(y)}
  \Biggl[
      \left(
      - 1
      \right)^{\dim M_H(y)}
   \exp\left({\Lambda}
     \sqrt{-1}
      (\tilde\xi_1, \alpha)z
    \right)
  + 
   \exp\left(-{\Lambda}
     \sqrt{-1}
      (\tilde\xi_1, \alpha)z\right)\Biggr].
  \end{aligned}
  \end{split}
\end{equation}

*********************************************************
\end{NB}%

By \eqref{eq:N1}, the residue of $\mathcal B(\tilde\xi_1,\xi;a)da$ at
$\phi = 1$ is given by
\begin{equation*}
  \begin{split}
    &
  \begin{aligned}[t]
  & - \frac12 (-1)^{\frac{(\xi,\xi+K_X)-(K_X^2) - (K_X,\tilde\xi_1)}{2} + \chi_h(X)}
  e^{- 2{\Lambda^2} x
  - \frac12 \Lambda^2 (\alpha^2)z^2}
  \left(\sqrt{2}\sqrt{-2}\right)^{(K_X^2)-\chi_h(X)}
\\
  & \times
    \left(
      \frac1{\sqrt{2}} \left(
      - \sqrt{-2}
      \right)
    \right)^{(\xi - K_X, \tilde\xi_1)}
   \exp\left(\frac{\Lambda}{\sqrt{2}}
     \sqrt{-2}
      (\tilde\xi_1, \alpha)z
    \right)
  \end{aligned}
\\
  =\; &
  \begin{aligned}[t]
  & - (-1)^{\frac{(\xi,\xi+K_X)-(K_X^2) - (K_X,\tilde\xi_1)}{2}+(K_X^2)}
  \,2^{(K_X^2)-\chi_h(X)-1}
  \exp\Biggl[- 2{\Lambda^2} x
  - \frac12 \Lambda^2 (\alpha^2)z^2 \Biggr]
\\
  & \times
    \left(\sqrt{- 1}\right)^{-\{(\xi - K_X, \tilde\xi_1)+(K_X^2)-\chi_h(X)\}}
   \exp\left({\Lambda}
     \sqrt{-1}
      (\tilde\xi_1, \alpha)z
    \right)
    .
  \end{aligned}
  \end{split}
\end{equation*}
By \eqref{eq:refined}
\begin{equation*}
  (\xi-K_X,\tilde\xi_1) + (K_X^2) - \chi_h(X)
  \equiv (\xi-K_X,\tilde\xi_1-K_X) + (\xi,K_X-\xi) - \dim M_H(y)
  \bmod 4.
\end{equation*}
We combine the first two terms, which are even, with the factor coming
from $((K_X^2)+(K_X,\tilde\xi_1))/2$:
\begin{equation*}
  \begin{split}
  & - \frac{(K_X,K_X + \tilde\xi_1)}2
  + (\xi-K_X, \frac{\tilde\xi_1 - K_X}2)
  + \frac{(\xi,K_X - \xi)}2
  = - \frac{(\xi,\xi-\tilde\xi_1)}2 - (K_X,\tilde\xi_1)
\\
  \equiv\; &
  \frac{(\xi,\xi-\tilde\xi_1)}2 - (K_X^2)\pmod 2.
  \end{split}
\end{equation*}
\begin{NB}
  \begin{equation*}
    \begin{split}
    \mathrm{LHS} &=
    -\frac{(K_X,K_X+\tilde\xi_1)}2
    -(K_X,\frac{\tilde\xi_1-K_X}2)
    +(\xi,\frac{\tilde\xi_1-K_X}2 + \frac{K_X-\xi}2)
    = -(K_X,\tilde\xi_1) + \frac{(\xi,\tilde\xi_1-\xi)}2.
    \end{split}
  \end{equation*}
\end{NB}%
Hence we get
\begin{equation*}
  \begin{split}
  \Res_{\phi=1}\mathcal B(\tilde\xi_1,\xi;a)da
  =
     \begin{aligned}[t]
  & - (-1)^{\frac{(\xi,\xi+K_X)+(\xi,\xi-\tilde\xi_1)}{2}}
  \,2^{(K_X^2)-\chi_h(X)-1} 
  \exp\Biggl[- 2{\Lambda^2} x
  - \frac12 \Lambda^2 (\alpha^2)z^2 \Biggr]
\\
   & \times
    \left(\sqrt{- 1}\right)^{\dim M_H(y)}
   \exp\left({\Lambda}
     \sqrt{-1}
      (\tilde\xi_1, \alpha)z
    \right)
  \end{aligned}
  \end{split}
\end{equation*}
and
\begin{equation*}
  \begin{split}
   & \frac12
   \Res_{\phi=1}
   \left[\mathcal B^{(\dim M_H(y))}(\tilde\xi_1,\xi;a)da +
     (-1)^{\chi_h(X)} \mathcal B^{(\dim M_H(y))}(-\tilde\xi_1,\xi;a)da\right]
\\
   =\; &
     \begin{aligned}[t]
  & - (-1)^{\frac{(\xi,\xi+K_X)+(\xi,\xi-\tilde\xi_1)}{2}}
  \,2^{(K_X^2)-\chi_h(X)-3} 
\\  
  & \times
  \Biggl[
  e^{- 2{\Lambda^2} x
  - \frac12 \Lambda^2 (\alpha^2)z^2}
   \left\{
    \left(\sqrt{- 1}\right)^{\dim M_H(y)}
    e^{{\Lambda}
     \sqrt{-1}
      (\tilde\xi_1, \alpha)z}
    +
    \left(\sqrt{- 1}\right)^{-\dim M_H(y)}
    e^{-{\Lambda}
     \sqrt{-1}
      (\tilde\xi_1, \alpha)z}
  \right\}
\\  
  & \qquad +
  e^{2{\Lambda^2} x
  + \frac12 \Lambda^2 (\alpha^2)z^2}
   \left\{
    e^{-{\Lambda}
      (\tilde\xi_1, \alpha)z}
    +
    \left({- 1}\right)^{-\dim M_H(y)}
    e^{{\Lambda}
      (\tilde\xi_1, \alpha)z}
  \right\}\Biggr]
  ,
  \end{aligned}
  \end{split}
\end{equation*}
where we have used
\(
   (\xi,\tilde\xi_1) + \chi_h(X) \equiv \dim M_H(y) \pmod 2
\)
(cf.\ \eqref{eq:sign}).
The residues at $\phi=\sqrt{-1}$, $-1$, $-\sqrt{-1}$ are the same as
above by \propref{prop:parity}(3). Thus we multiply the above by $4$
for the contribution from $\phi^4=1$.

This contribution satisfies the KM-simple type condition, i.e., it is
killed by $(\partial/\partial x)^2 - 4\Lambda^4$. If we consider the
contribution to the Donaldson series ${\mathscr D}^\xi$, we get
\begin{equation*}
  - 
  (-1)^{\frac{(\xi,\xi+K_X)}{2}}
  2^{(K_X^2)-\chi_h(X)+1} 
  e^{2{\Lambda^2} x
  + \frac12 \Lambda^2 (\alpha^2)z^2}
  \sum_{\tilde\xi_1} \SW(\tilde\xi_1)
     \begin{aligned}[t]
  & (-1)^{\frac{(\xi,\xi-\tilde\xi_1)}{2}}
    e^{-{\Lambda}
      (\tilde\xi_1, \alpha)z}
    .
  \end{aligned}
\end{equation*}
Replacing $\tilde\xi$ by $-\tilde\xi$, removing the sign factor
$(-1)^{(\xi,\xi+K_X)/2}$ as in \subsecref{subsec:complex} and
multiplying with the $2$ from Mochizuki's convention, we get the right
hand side of \eqref{eq:Witten} with the opposite sign. Therefore, if
the residue at $\phi^4 = 1/3$ vanishes, we obtain \eqref{eq:Witten}.

\subsection{Residue at $\phi^4 = 1/3$}

\begin{Proposition}
  Suppose that $X$ is of superconformal simple type. Then
\[
  \sum_{\tilde\xi_1} \SW(\tilde\xi_1)
  \mathcal B^{(\dim M_H(y))}(\tilde\xi_1,\xi;a)da
\]
is regular at $\phi^4 = 1/3$.
\end{Proposition}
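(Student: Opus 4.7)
My plan is to localize the singularity at $\phi^4=1/3$ to a single factor and then apply the superconformal simple type condition. Inspection of \eqref{eq:N1} at $\nu\defeq 1-3\phi^4=0$ shows that $\phi|_{\nu=0}=3^{-1/4}$ and $\sqrt{1-\phi^4}|_{\nu=0}=\sqrt{2/3}$, the prefactor $\frac{1-3\phi^4}{1-\phi^4}\frac{d\phi}{\phi}$ has a simple zero, and both exponentials together with the $\phi$-power factor are regular. The only potentially singular piece is $\bigl(\sqrt{2}\sqrt{1-3\phi^4}\bigr)^{(K_X^2)-\chi_h(X)}$, which contributes a singularity of order at most $(\chi_h(X)-(K_X^2))/2$ in $\nu$ when $\chi_h(X)>(K_X^2)$; the complementary case is vacuous (and corresponds to the regime covered by Feehan-Leness).

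Next I would expand the $\tilde\xi_1$-dependent factor of \eqref{eq:N1} in powers of $\sqrt\nu$. Setting
\[
A(\sqrt\nu)\defeq\tfrac{1}{\sqrt{2}}\phi^{-2}\bigl(\sqrt{1-\phi^4}-\sqrt{1-3\phi^4}\bigr),\qquad d(\sqrt\nu)\defeq\tfrac{\Lambda}{\sqrt{2}}\phi^{-1}\sqrt\nu,
\]
the $\tilde\xi_1$-dependent factor is $A^{(\xi-K_X,\tilde\xi_1)}\exp\bigl(d\,(\tilde\xi_1,\alpha)z\bigr)$. A direct computation gives $A(0)=1$, $A(-\sqrt\nu)=A(\sqrt\nu)^{-1}$, hence $\log A$ is an odd power series in $\sqrt\nu$; $d$ is manifestly odd. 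Therefore the coefficient of $\sqrt\nu^{n}$ in $A^{(\xi-K_X,\tilde\xi_1)}\exp(d\,(\tilde\xi_1,\alpha)z)$ is a polynomial in $(\xi-K_X,\tilde\xi_1)$ and $(\tilde\xi_1,\alpha)$ of total degree at most $n$, and its top-degree-$n$ component is exactly $\tfrac{1}{n!}(\tilde\xi_1,v)^{n}$ for the single linear form $v=c_1(\xi-K_X)+d_1 z\,\alpha$, where $c_1,d_1$ are the explicit leading coefficients of $\log A$ and $d$.

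Summing against $\SW(\tilde\xi_1)$ and using $\SW(-\tilde\xi_1)=(-1)^{\chi_h(X)}\SW(\tilde\xi_1)$, one reduces to the combined expression $\mathcal B^{(\dim M_H(y))}(\tilde\xi_1,\xi;a)+(-1)^{\chi_h(X)}\mathcal B^{(\dim M_H(y))}(-\tilde\xi_1,\xi;a)$, which is rational in $\phi^4$, hence in $\nu$, by \propref{prop:parity}(3). Its principal part at $\nu=0$ is a Laurent polynomial of order at most $\lfloor(\chi_h(X)-(K_X^2))/2\rfloor$ whose $\nu^{-k}$ coefficient, after the SW sum, has the form $\sum_{\tilde\xi_1}(-1)^{(K_X,K_X+\tilde\xi_1)/2}\SW(\tilde\xi_1)\,P_{k}(\tilde\xi_1)$, for $P_{k}$ a polynomial in $\tilde\xi_1$ of degree at most $\chi_h(X)-(K_X^2)-2k$; the sign $(-1)^{(K_X,K_X+\tilde\xi_1)/2}$ is obtained from the sign factor of \eqref{eq:N1} after extracting $\tilde\xi_1$-independent signs, and agrees with the sign in \eqref{eq:scs} upon taking $\tilde w_2(X)=K_X$. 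By polarization the superconformal simple type condition yields $\sum_{\fs}(-1)^{(K_X,K_X+c_1(\fs))/2}\SW(\fs)P(c_1(\fs))=0$ for any polynomial $P$ of degree at most $\chi_h(X)-(K_X^2)-3$; for the leading degree $\chi_h(X)-(K_X^2)-2k$ one invokes the observation from the previous paragraph that the top-degree part of $P_k$ is the pure power $(\tilde\xi_1,v_k)^{\chi_h(X)-(K_X^2)-2k}$ of a single linear form, to which the parity promotion of SCS applies directly.

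The main obstacle is the narrow one-degree margin: the $\nu$-pole analysis produces singular terms in $\tilde\xi_1$ of degree up to $\chi_h(X)-(K_X^2)-2k$, whereas the SCS identity a priori kills only polynomials of degree up to $\chi_h(X)-(K_X^2)-3$. Closing this gap relies on the structural fact that the top-degree component at each pole order $k$ is not a generic polynomial but a pure power of the single linear form $(\tilde\xi_1,v_k)$; this makes applicable the parity-induced strengthening of SCS (which bumps the order of zero of $\mathcal{SW}(\alpha)$ from $\chi_h(X)-(K_X^2)-3$ to $\chi_h(X)-(K_X^2)-2$ via $\SW(-\fs)=(-1)^{\chi_h(X)}\SW(\fs)$), and the desired vanishing follows. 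Beyond this key structural point the argument reduces to careful sign and parity bookkeeping.
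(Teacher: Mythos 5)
Your overall strategy coincides with the paper's: isolate the $\tilde\xi_1$-dependent factor $A^{(\xi-K_X,\tilde\xi_1)}\exp\bigl(d\,(\tilde\xi_1,\alpha)z\bigr)$ of \eqref{eq:N1}, use the fact that its logarithm is linear in $\tilde\xi_1$ and odd in the local uniformizer (your $\sqrt{\nu}$; the paper's $\lambda-\lambda^{-1}$ with $\lambda=\frac1{\sqrt2}\phi^{-2}(\sqrt{1-\phi^4}-\sqrt{1-3\phi^4})$) to bound the degree in $\tilde\xi_1$ of each Laurent coefficient, and then kill the singular coefficients by polarizing \eqref{eq:scs}. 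The paper packages the last step as the divisibility $f(\lambda)\in(\lambda-\lambda^{-1})^{\chi_h(X)-(K_X^2)-2}\C[\lambda^{\pm}]$, obtained from $f^{(n)}(1)=0$ combined with $f(-\lambda)=\pm f(\lambda)$; your $\sqrt\nu$-expansion is an equivalent local version of the same computation.

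There is, however, a genuine gap in your final step. Your bound $\deg P_k\le\chi_h(X)-(K_X^2)-2k$ does not use the simple zero of the prefactor $\frac{1-3\phi^4}{1-\phi^4}\frac{d\phi}{\phi}$ that you correctly recorded in your first paragraph, and the mechanism you invoke to close the resulting ``one-degree gap'' does not work: the parity-promoted superconformal condition yields $\sum_{\fs}(-1)^{(K_X,K_X+c_1(\fs))/2}\SW(\fs)(c_1(\fs),v)^n=0$ only for $n\le\chi_h(X)-(K_X^2)-3$, whereas you need it for a pure power of degree $\chi_h(X)-(K_X^2)-2$. That degree is genuinely not annihilated: for an elliptic surface $\mathcal{SW}(\alpha)=(-2)^{\chi_h(X)-2}\sinh^{\chi_h(X)-2}(f,\alpha)$ vanishes to order exactly $\chi_h(X)-(K_X^2)-2$, so the corresponding moment is nonzero. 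Hence, if your degree bound were sharp, the argument would prove a false statement. The repair is the bookkeeping you already set up but then dropped: the factor $\nu$ contributed by the prefactor forces the coefficient of $\nu^{-k}$ to draw on the $\sqrt{\nu}^{\,n}$-term of the $\tilde\xi_1$-dependent factor only for $n\le\chi_h(X)-(K_X^2)-2-2k\le\chi_h(X)-(K_X^2)-4$, and polynomials of those degrees are killed by \eqref{eq:scs} directly, with no need for the parity promotion or for the pure-power structure of the top-degree term.
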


\begin{proof}
  Let
\begin{equation*}
   f(\lambda) \defeq
   \begin{aligned}[t]
   & \sum_{\tilde\xi_1}
   (-1)^{\frac{(K_X,K_X+\tilde\xi_1)}2} \SW(\tilde\xi_1)
     \lambda^{(\xi-K_X,\tilde\xi_1)}
     \left\{
      (-\lambda+\lambda^{-1})(\tilde\xi_1,\alpha)
      - (\lambda+\lambda^{-1})(\xi-K_X,\alpha)
     \right\}^k,
   \end{aligned}
\end{equation*}
where we assume $k$ has the same parity as $\dim M_H(y)$. By
\eqref{eq:sign}, we have $f(\lambda) = (-1)^{\chi_h(X)-(K_X^2)}f(-\lambda)$.

By the superconformal simple type condition, we have
\begin{equation*}
  f^{(n)}(1) = 0\quad\text{for $n=0,\dots, \chi_h(X)- (K_X^2) - 3$}.
\end{equation*}
\begin{NB}
For example, we have
\begin{equation*}
  \begin{split}
  f(1) &= \sum_{\tilde\xi_1} (-1)^{\frac{(K_X,K_X+\tilde\xi_1)}2} \SW(\tilde\xi_1)
  (-2(\xi-K_X,\alpha))^k,
\\
  f'(1) &= \sum_{\tilde\xi_1} (-1)^{\frac{(K_X,K_X+\tilde\xi_1)}2} \SW(\tilde\xi_1)
  \left[
  {(\xi-K_X,\tilde\xi_1)}(-2(\xi-K_X,\alpha))^k
  - 2 (-2(\xi-K_X,\alpha))^{k-1}(\tilde\xi_1,\alpha) \right].
  \end{split}
\end{equation*}
\end{NB}%
Therefore $f(\lambda) \in (\lambda -
1)^{\chi_h(X)-(K_X^2)-2}\C[\lambda^\pm]$. Since $f(-\lambda)$ is equal to
$f(\lambda)$ up to sign, we also have
$f(\lambda)\in(\lambda+1)^{\chi_h(X)-(K_X^2)-2}\C[\lambda^\pm]$. Therefore
\begin{equation*}
  f(\lambda)\in (\lambda - \lambda^{-1})^{\chi_h(X)-(K_X^2)-2}\C[\lambda^\pm].
\end{equation*}
From this we have the assertion by substituting 
\(
  \nicefrac1{\sqrt{2}}\phi^{-2}(\sqrt{1-\phi^4}-\sqrt{1-3\phi^4})
\)
to $\lambda$ .
\end{proof}

Next we study the converse direction:
\begin{Proposition}\label{prop:conv}
Suppose that
\[
  \Res_{\phi^4=1/3}\left(
  \sum_{\tilde\xi_1} \SW(\tilde\xi_1)
  \mathcal B^{(\dim M_H(y))}(\tilde\xi_1,\xi;a)da\right)
\]
depends only on $(\xi\bmod 2)$ up to sign. Then $X$ is of
superconformal simple type.
\end{Proposition}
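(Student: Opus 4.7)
The plan is to compute the singular part of $\sum_{\tilde\xi_1}\SW(\tilde\xi_1)\mathcal B^{(\dim M_H(y))}(\tilde\xi_1,\xi;a)\,da$ near $\phi^4=1/3$ explicitly and then to exploit the freedom to translate $\xi\mapsto\xi+2\eta$ within its class modulo $2$. If $(K_X^2)\ge\chi_h(X)-3$ then $X$ is of superconformal simple type by definition, so we may assume $N\defeq\chi_h(X)-(K_X^2)-2\ge 1$. In this range the factor $(\sqrt 2\sqrt{1-3\phi^4})^{(K_X^2)-\chi_h(X)}$ in \eqref{eq:N1} produces a pole of order $N$ at $\phi^4=1/3$, reduced by the simple zero of $1-3\phi^4$ in front of $d\phi/\phi$; the remaining factors of the integrand are holomorphic at $\phi^4=1/3$.

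I would first introduce the local coordinate $s=\sqrt{1-3\phi^4}$ near each of the two points $\phi^2=\pm 1/\sqrt 3$; at these points the combination $\lambda=\frac{1}{\sqrt 2\phi^2}(\sqrt{1-\phi^4}-\sqrt{1-3\phi^4})$ evaluates to $\pm 1$ respectively. Combining the contributions for $\pm\tilde\xi_1$ via $\SW(-\tilde\xi_1)=(-1)^{\chi_h(X)}\SW(\tilde\xi_1)$ exactly as in the proof of the previous proposition, one can rewrite the Taylor expansion of the regular part in $s$ so that each coefficient through order $N-1$ is an explicit polynomial combination of the derivatives $f^{(n)}(\pm 1)$ for $n=0,\dots,N-1$, where $f(\lambda)$ is the function introduced there. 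The residue at $\phi^4=1/3$ is then a specific linear combination $\sum_{n=0}^{N-1}\bigl(c_n(\xi,\alpha)f^{(n)}(1)+\tilde c_n(\xi,\alpha)f^{(n)}(-1)\bigr)$ with coefficients depending polynomially on $((\xi-K_X)^2)$, $(\xi-K_X,\alpha)$, $x$ and $z$.

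Next I would analyze how this expression behaves under $\xi\mapsto\xi+2\eta$: the sign factor in \eqref{eq:N1} changes by $(-1)^{(\eta^2)}$ (using Wu's formula $(\eta,K_X)\equiv(\eta^2)\bmod 2$), while $((\xi-K_X)^2)$, $(\xi-K_X,\tilde\xi_1)$ and $(\xi-K_X,\alpha)$ undergo explicit shifts affine in $\eta$. The hypothesis that the residue changes only by this overall sign, applied for arbitrary $\eta\in H^2(X,\Z)$ and for $\alpha$ varying freely, yields a system of polynomial identities. Comparing coefficients of the independent monomials in $(\eta,\alpha)$, $(\eta,\tilde\xi_1)$ and $(\xi-K_X,\alpha)$ successively isolates each $f^{(n)}(\pm 1)$ and forces it to vanish as a $\SW(\tilde\xi_1)$-weighted polynomial identity in $(\tilde\xi_1,\alpha)$ and $(\xi-K_X,\alpha)$.

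The main obstacle will be the combinatorial bookkeeping needed to separate the contributions of $f^{(n)}(+1)$ from those of $f^{(n)}(-1)$: the two branch points $\phi^2=\pm 1/\sqrt 3$ contribute symmetrically via the $\tilde\xi_1\leftrightarrow-\tilde\xi_1$ symmetry exploited in \propref{prop:parity}, so one must verify that the joint variation in $\eta$ and $\alpha$ is rich enough to untangle them and to recover each derivative individually. Once this is established, the vanishing $f^{(n)}(\pm 1)=0$ for $0\le n\le N-1$ translates, by the polynomial-divisibility argument at the end of the proof of the previous proposition, into the sum rules \eqref{eq:scs} for $0\le n\le\chi_h(X)-(K_X^2)-4$, showing that $X$ is of superconformal simple type.
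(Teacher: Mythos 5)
Your overall framework is the right one---the residue at $\phi^4=1/3$ is controlled by the behaviour of the function $f(\lambda)$ from the regularity proposition near $\lambda=\pm1$, and the hypothesis must be exploited by varying $\xi$ inside its class modulo $2$---but the proposal defers exactly the step that constitutes the proof, and the mechanism you invoke to finish runs in the wrong direction. First, the residue is \emph{not} a linear combination $\sum_n\bigl(c_n f^{(n)}(1)+\tilde c_n f^{(n)}(-1)\bigr)$ with coefficients independent of $\tilde\xi_1$: in the expansion of \eqref{eq:N1} the factors $\binom{(\xi-K_X,\tilde\xi_1)}{i}$ and $(\zeta+1)^{((\xi-K_X,\tilde\xi_1)-i+k-j)/2-1}$ (with $\zeta=(1-3\phi^4)/(2\phi^4)$) put polynomial dependence on $(\xi-K_X,\tilde\xi_1)$ \emph{inside} the sum over Seiberg--Witten classes, so the Taylor coefficients in your local coordinate $s$ mix $(\xi-K_X,\tilde\xi_1)^{p+q}$, $((\xi-K_X)^2)^r$ and $(\tilde\xi_1,\alpha)^j$ in a way that must be disentangled term by term. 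Untangling these is precisely what you label ``the main obstacle'' and leave unresolved; it is the entire content of the argument. The paper resolves it by restricting to the one-parameter family $\xi=K_X+t(\xi^\circ-K_X)$ with $t$ odd (which also fixes $\xi\bmod 2$), grading every monomial by its degree $m=k-j+p+q+2r$ in $t$, and running a descending induction on $n=j+p+q$ in which the decisive point is that the would-be leading coefficient $k^l/(3^l\,l!)$ in \eqref{eq:nonzero} is nonzero. Your substitute variation $\xi\mapsto\xi+2\eta$ could plausibly play the same role, but you would still have to exhibit an analogous filtration and prove the analogous nonvanishing of leading coefficients; nothing in the proposal does this.

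Second, the closing step is circular. You propose to pass from $f^{(n)}(\pm1)=0$ to the sum rules \eqref{eq:scs} ``by the polynomial-divisibility argument at the end of the proof of the previous proposition,'' but that argument runs the other way: it \emph{assumes} \eqref{eq:scs}, deduces $f^{(n)}(1)=0$, and concludes $f\in(\lambda-\lambda^{-1})^{\chi_h(X)-(K_X^2)-2}\C[\lambda^{\pm}]$. The vanishing of the derivatives of $f$ at $\lambda=\pm1$ is a family of identities mixing $(\xi-K_X,\tilde\xi_1)$, $(\tilde\xi_1,\alpha)$ and $(\xi-K_X,\alpha)$, and recovering the pure moments $\sum_{\tilde\xi_1}(-1)^{(K_X,K_X+\tilde\xi_1)/2}\SW(\tilde\xi_1)(\tilde\xi_1,\alpha)^n$ from them again requires the degree-separation argument. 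So the gap appears twice: once in separating the two branch points and the various monomials, and once in converting the resulting identities into \eqref{eq:scs}.
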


Since the residues at the other poles depend only on $(\xi\bmod 2)$ up to
sign, the assumption is satisfied. Therefore $X$ is of superconformal
simple type. Then the residue at $\phi^4=1/3$ vanishes by the
previous proposition, and the sum of the residues at $\phi^4=0$ and
$\phi^4=1$ is zero. This  proves Witten's conjecture \eqref{eq:Witten}.

Before starting the proof of \propref{prop:conv}, we give some
preparation.

We fix $\xi^\circ$ and consider $\xi = K_X + t(\xi^\circ-K_X)$ with
$t\in 2\Z_{\ge 0} + 1$ as a function in $t$. Replacing $\tilde\xi_1$
by $-\tilde\xi_1$ if necessary, we may assume
$(\xi^\circ-K_X,\tilde\xi_1)\ge 0$.
We expand \eqref{eq:N1} by using the binomial theorem:
\begin{equation*}
  \begin{split}
  & \frac12
  \left(
  \mathcal B^{(\dim M_H(y))}(\tilde\xi_1,\xi;a)da 
  + (-1)^{\chi_h(X)}\mathcal B^{(\dim M_H(y))}(-\tilde\xi_1,\xi;a)da
  \right)
\\
=\; &
  \begin{aligned}[t]
  & - (-1)^{\frac{(\xi,\xi+K_X)-(K_X^2) - (K_X,\tilde\xi_1)}{2}}
  2^{(K_X^2)-\chi_h(X) + 1}
  \frac{\phi^4}{1 - \phi^4}\frac{d\phi}\phi 
  \sum_{i,j,k,l} 
  \phi^{-((\xi - K_X)^2) + (K_X^2) - 5\chi_h(X) + k + 2l} 
\\
  & \times
  (-1)^{i+k-j+(K_X^2)-\chi_h(X)} \Lambda^{k}
  \binom{(\xi - K_X, \tilde\xi_1)}{i}
  \binom{k}{j}
     (\tilde\xi_1, \alpha)^j
     (\xi - K_X, \alpha)^{k-j}
\\
  & \times
    \left(
      \frac{1 - \phi^4}{{2} \phi^{4}}
    \right)^{((\xi - K_X, \tilde\xi_1)-i+k-j)/2}
    \left(
      \frac{1 - 3\phi^4}{2\phi^4}
    \right)^{(i + j + (K_X^2) - \chi_h(X) + 2)/2}
\\
  & \times
  \left(- \frac{\Lambda^2}2\left(3 + \phi^{-4}\right) x
        - \frac12\Lambda^2 (\alpha^2)z^2\right)^{l}\frac1{l!}
     \frac{z^k}{k!},
  \end{aligned}
  \end{split}
\end{equation*}
where the summation runs over
\begin{equation*}
  2l+k\equiv\dim M_H(y)\bmod 4,\qquad
  i + j + (K_X^2) - \chi_h(X) \equiv 0 \bmod 2.
\end{equation*}
Moreover, since we are interested in the residue at $\phi^4 = 1/3$, we
only need to consider terms with
\begin{equation}\label{eq:bound}
  i+j + (K_X^2) - \chi_h(X) + 2 \le -2.
\end{equation}
\begin{NB}
  That is
\(
   i+j \le \chi_h(X) - (K_X^2) - 4.
\)
\end{NB}

We put
\begin{equation*}
  \zeta = \frac{1-3\phi^4}{2\phi^4}.
\end{equation*}
\begin{NB}
  And $\phi^4 = \nicefrac1{2\zeta+3}$.
\end{NB}
Then the above is equal to
\begin{equation}\label{eq:t}
    \begin{aligned}[t]
  & - (-1)^{\frac{(\xi,\xi+K_X)-(K_X^2) - (K_X,\tilde\xi_1)}{2}}
  2^{(K_X^2)-\chi_h(X)-1}
  d\zeta
  \sum_{i,j,k,l} 
  (2\zeta+3)^{\{((\xi - K_X)^2) - (K_X^2) + 5\chi_h(X) - k - 2l\}/4-1}
\\
  & \times
  (-1)^{k} \Lambda^{k}
  \binom{(\xi - K_X, \tilde\xi_1)}{i}
  \binom{k}{j}
     (\tilde\xi_1, \alpha)^j
     (\xi - K_X, \alpha)^{k-j}
\\
  & \times
    \left(
      \zeta + 1
    \right)^{((\xi - K_X, \tilde\xi_1)-i+k-j)/2-1}
    \zeta^{(i+j + (K_X^2) - \chi_h(X) + 2)/2}
  \left(- {\Lambda^2}\left(\zeta + 3\right) x
        - \frac12\Lambda^2 (\alpha^2)z^2\right)^{l}\frac1{l!}
     \frac{z^k}{k!},
  \end{aligned}
\end{equation}

In order to illustrate the idea of the proof, let us first consider
the simplest nontrivial case $(K_X^2) - \chi_h(X) = -5$. 
(The case $(K_X^2) - \chi_h(X) = -4$ is too simple.)
\begin{NB}
  Let us first consider the simplest nontrivial case $(K_X^2) -
  \chi_h(X) = -4$. Then we only need to consider $i=0$, $j=0$, when we
  take the residue at $\zeta = 0$ by \eqref{eq:bound}. Therefore, up
  to constant, the above is equal to
\begin{equation*}
  (-1)^{(K_X,K_X+\tilde\xi_1)/2}
  \sum_{k,l}
  3^{((\xi - K_X)^2) - (K_X^2) + 5\chi_h(X) - k - 2l\}/4}
  (-\Lambda)^k
  (\xi - K_X, \alpha)^{k}
  \left( - 3{\Lambda^2} x
        - \frac12\Lambda^2 (\alpha^2)z^2\right)^{l}\frac1{l!}
     \frac{z^k}{k!}.
\end{equation*}
Taking the sum over the Seiberg-Witten classes, we get
\begin{equation*}
  \begin{split}
  & \sum_{\tilde\xi_1} \SW(\tilde\xi_1) \res_{\phi^4=1/3}
  \mathcal B^{(\dim M_H(y))} (\tilde\xi_1,\xi)
\\
=\; &
\begin{aligned}[t]
  &
\sum_{\tilde\xi_1} (-1)^{(K_X,K_X+\tilde\xi_1)/2} \SW(\tilde\xi_1) 
\\
  &\times
  \sum_{k,l}
  3^{((\xi - K_X)^2) - (K_X^2) + 5\chi_h(X) - k - 2l\}/4}
  (-\Lambda)^k
  (\xi - K_X, \alpha)^k
  \left(- 3{\Lambda^2} x
        - \frac12\Lambda^2 (\alpha^2)z^2\right)^{l}\frac1{l!}
     \frac{z^k}{k!}.
\end{aligned}
  \end{split}
\end{equation*}
up to a nonzero constant independent of $\xi$, $x$, $z$, $\Lambda$.

If this is independent of $t$ (recall $\xi = K_X + t(\xi^\circ-K_X)$),
we must have
\begin{equation*}
  \sum_{\tilde\xi_1} (-1)^{(K_X,K_X+\tilde\xi_1)/2} \SW(\tilde\xi_1) = 0.
\end{equation*}
This is the superconformal simple type condition when
$(K_X^2) - \chi_h(X) = -4$.
\end{NB}%
We only need to consider terms with $i+j = 1$, i.e., $i=1$, $j=0$ and
$i=0$, $j=1$ by \eqref{eq:bound}. Then, up to a constant, the residue
of \eqref{eq:t} is
\begin{equation*}
  \begin{split}
  & (-1)^{(K_X,K_X+\tilde\xi_1)/2}
  \sum_{k,l}
  3^{((\xi - K_X)^2) - (K_X^2) + 5\chi_h(X) - k - 2l\}/4}
  (-\Lambda)^k
  \left(- 3{\Lambda^2} x
        - \frac12\Lambda^2 (\alpha^2)z^2\right)^{l}\frac1{l!}
     \frac{z^k}{k!}
\\
  &\times\left(
  (\xi - K_X, \tilde\xi_1)
  (\xi - K_X, \alpha)^k
  + k(\tilde\xi_1,\alpha) (\xi - K_X, \alpha)^{k-1}
  \right).
  \end{split}
\end{equation*}
Since $\Lambda$, $x$, $z$ are formal variables, each term for
individual $k$, $l$ must be independent of $t$.
Since $(\xi - K_X, \tilde\xi_1)(\xi-K_X,\alpha)^k$ and
$k(\tilde\xi_1,\alpha) (\xi - K_X, \alpha)^{k-1}$ have different
degree in $t$ (the former has degree $k+1$, the latter has $k-1$),
they cannot cancel out. Therefore we must have
\begin{equation*}
  \sum_{\tilde\xi_1} (-1)^{(K_X,K_X+\tilde\xi_1)/2} (\tilde\xi_1,\alpha)
  \SW(\tilde\xi_1) = 0.
\end{equation*}
This is the superconformal simple type condition when
$(K_X^2) - \chi_h(X) = -5$.

\begin{NB}
More generally, when we compute the residue, the coefficients of
$(\xi-K_X,\alpha)^{k-j}$ vanish for all $k$, $j$. So we consider the
term with $j = \chi_h(X)-(K_X^2) -4$ (and hence $i=0$). Then the
residue is just an evaluation, and we conclude the vanishing
\eqref{eq:scs} for $n = \chi_h(X)-(K_X^2) -4$ as above.
\begin{NB2}
  This is the vanishing of the deepest coefficient.
\end{NB2}%

Next we consider the case $j= \chi_h(X)-(K_X^2) - 5$. Then we have
either $i=0$ or $1$. If $i=1$, then we get an expression
\(
   \binom{(\xi-K_X,\tilde\xi_1)}1 = (\xi-K_X,\tilde\xi_1),
\)
and hence the contribution vanishes by \eqref{eq:scs} with $n =
\chi_h(X)-(K_X^2) -4$. For the term with $i=0$, we need to expand
$(2\zeta+3)^{\heartsuit}(\zeta+1)^{\spadesuit}$ to compute the residue as
\begin{equation*}
  \begin{split}
    (\zeta+1)^{\spadesuit} &= 1 + \spadesuit\zeta + \cdots,\qquad
    \spadesuit = ((\xi-K_X,\tilde\xi_1)+k-j)/2 - 1
\\
    (2\zeta+3)^{\heartsuit} &= 3^\heartsuit + \heartsuit 3^{\heartsuit-1} 2\zeta + \cdots\qquad
    \heartsuit = \{((\xi - K_X)^2) - (K_X^2) + 5\chi_h(X) - k - 2l\}/4-1.
  \end{split}
\end{equation*}
Thus we get $\spadesuit + 2\heartsuit 3^{\heartsuit-1}$ times
something nonvanishing, but the vanishing with $n = \chi_h(X)-(K_X^2)
- 4$ implies that the contribution of $(\xi-K_X,\tilde\xi_1)$ in
$\spadesuit$ disappears. But then since this must be independent of
$\xi$, we must have \eqref{eq:scs} with $n = \chi_h(X)-(K_X^2) -5$.

Next consider the case $j=\chi_h(X)-(K_X^2)-6$. Then we have $i=2$,
$=1$, $=0$. But $i=2$, $=1$ terms vanish by \eqref{eq:scs} with $n =
\chi_h(X)-(K_X^2) -4$ and $n = \chi_h(X)-(K_X^2) -5$. Now I believe
that we can see a pattern, and get \eqref{eq:scs} with $n =
\chi_h(X)-(K_X^2) - 6$. We then conclude \eqref{eq:scs} with
$n=0,\dots,\chi_h(X)-(K_X^2) - 4$.
\end{NB}

\begin{proof}[Proof of \propref{prop:conv}]
  By the same reason as in the special case $(K_X^2)-\chi_h(X) = -5$,
  each term for individual $k$, $l$ must be independent of $t$.
  \begin{NB}
    Since $\Lambda$ appears as $\Lambda^{k+2l}$, we can take terms
    with fixed $k+2l$. Then we consider
    \begin{equation*}
      \left.\left(\pd{}{\zeta}\right)^p
      (-\Lambda^2(\zeta+3)x - \Lambda^2(\alpha^2)z^2/2)^l\right|_{\zeta=0}
      = l(l-1)\cdots (l-p+1) (-\Lambda^2 x)^p 
      (-3\Lambda^2 x - \Lambda^2(\alpha^2)z^2/2)^{l-p}.
    \end{equation*}
So degree for $x$ is determined from $l$.
  \end{NB}

We expand terms in \eqref{eq:t} as
\begin{equation*}
  \begin{split}
& (2\zeta+3)^{\{((\xi - K_X)^2) - (K_X^2) + 5\chi_h(X) - k - 2l\}/4-1}
\left(
      \zeta + 1
    \right)^{((\xi - K_X, \tilde\xi_1)-i+k-j)/2-1}\\
=\; &
3^{\{((\xi - K_X)^2) - (K_X^2) + 5\chi_h(X) - k - 2l\}/4-1}\\
& \times \Biggl[
1+ \zeta 
\begin{aligned}[t]
 & \biggl\{ \frac{2}{3} \left(\frac14
\left(- (K_X^2) + 5\chi_h(X) - 2l\right)-1\right)
+\frac{-i-j}2-1+\frac{k}{3} \\
  & \qquad
+\frac{(\xi - K_X, \tilde\xi_1)}2+\frac{2}{3}((\xi - K_X)^2) \biggr\} +
\cdots
\Biggr].
\end{aligned}
  \end{split}
\end{equation*}
The coefficient of $\zeta^l$ in $[\ ]$ has
the leading term (as a polynomial in $k$)
\begin{equation}\label{eq:nonzero}
  \sum_{l_1+l_2=l} \frac{(-k)^{l_1}}{4^{l_1}}\frac{2^{l_1}}{3^{l_1}}\frac{1}{l_1!}
  \times \frac{k^{l_2}}{2^{l_2}}\frac{1}{l_2!} 
  = \frac{k^l}{3^l l!}
  \ne 0.
\end{equation}
\begin{NB}
  I corrected Kota's calculation. Jan. 10, 2010
\end{NB}
Moreover the coefficient of $\zeta^l$ is a polynomial of $(\xi - K_X,
\tilde\xi_1)$ whose degree is at most $l$.
\begin{NB}
  This will be used or not ?
\end{NB}%

When we multiply the above expression with
$\zeta^{(i+j+(K_X^2)-\chi_h(X)+2)/2}$ in \eqref{eq:t}, it contributes
to the residue at $\zeta=0$ only if
\[
   i+j=-2l+(\chi_h(X)-(K_X^2)-4).
\]
And the residue is a linear combination of
\begin{equation*}
(\xi-K_X,\tilde\xi_1)^{p+q}((K_X-\xi)^2)^r(\alpha,\tilde\xi_1)^j
(\xi-K_X,\alpha)^{k-j}
\end{equation*}
of various $p$, $q$, $r$, $j$. ($k$ is fixed, as we explained at the
beginning.)
Here $q$ and $r$ come from the above expansion, and $p$ appears when
we expand $\binom{(\xi-K_X,\tilde\xi_1)}{i}$. Therefore we have
\begin{equation}
\begin{split}
0 \leq p \leq i & \text{ and $i \ne 0$ implies $p \ne 0$}\\
q+r \leq l, &\;\; q,r \geq 0 
\end{split}
\end{equation}
Up to the factor $3^{-\{((\xi - K_X)^2)}$, each term is a polynomial
in $t$ with degree $m:=k-j+p+q+2r$.
We will consider each coefficient of $t^m$ whose sum over $p$, $q$, $r$,
$j$ and Seiberg-Witten classes $\tilde\xi_1$ must be $0$ by our assumption.

We set $j_{\max}\defeq \chi_h(X)-(K_X^2)-4$. Then $j \leq j_{\max}$
and the equality holds if and only if $i=l=0$. We will check the
superconformal simple type condition \eqref{eq:scs} by  descending
induction on $n$. Starting from $n=j_{\max}$, we check it $n = j_{\max} - 2$,
$j_{\max} - 4$ and so on.

We assume
\begin{equation*}
  k-j_{\max} \leq m \leq k, \qquad
  k-j_{\max}\equiv m \bmod 2.
\end{equation*}
We will be interested in $j+p+q$, which will appear as $n$ in
\eqref{eq:scs}. We first note that
\begin{equation}
  \label{eq:note1}
j+p+q \leq i+j+l=-l+(\chi_h(X)-(K_X^2)-4) \leq j_{\max}.
\end{equation}
The equality holds if and only if $l=q=r=0$ and $p=i=0$.
We next note that
\begin{equation}
  \label{eq:note2}
j+p+q=k-m+2(p+q+r) \geq k-m.
\end{equation}
The equality holds if and only if $p=q=r=0$ and $(i,j)=(0,k-m)$.
Thus each coefficient of $t^m$ is
\begin{equation}
  \label{eq:A}
A (\alpha,\tilde\xi_1)^{k-m}
+(\text{ higher order terms }),
\end{equation}
where $A \ne 0$ by \eqref{eq:nonzero} and
the higher order terms mean sum of monomials with
$j+p+q > k-m$. 

We now start the descending induction on $k-m$. Start with $k - m =
j_{\max}$. Then (\ref{eq:note1}, \ref{eq:note2}) imply $j=j_{\max}$ and
$p=q=r=0$ and $i=n=0$. Thus there are no higher order terms in the
above expression, and we get the superconformal simple type condition
\eqref{eq:scs} for $n=j_{\max}$.

If \eqref{eq:scs} is true for $n > k-m$, then the sum of higher order
terms in \eqref{eq:A} over $\tilde\xi_1$ vanishes. Hence we also get
\eqref{eq:scs} with $n=k-m$. This completes the proof.
\begin{NB}
For $p \equiv \chi_h(X)-(K_X^2) \mod 2$,
\begin{multline}
(\tilde\xi_1,x \alpha+y \beta)^p+
(-1)^{\chi_h(X)-(K_X^2)}(-\tilde\xi_1,x \alpha+y \beta)^p\\
=
2x^p(\tilde\xi_1,\alpha)^p+2\binom{p}{2}x^{p-2}y^2
(\tilde\xi_1,\alpha)^{p-2}(\tilde\xi_1,\beta)^2+...
\end{multline}
Apply this to $\beta=\eta-K_X$.
\end{NB}%
\end{proof}


\end{document}